\newcommand\reallywidehat[1]{%
\savestack{\tmpbox}{\stretchto{%
  \scaleto{%
    \scalerel*[\widthof{\ensuremath{#1}}]{\kern-.6pt\bigwedge\kern-.6pt}%
    {\rule[-\textheight/2]{1ex}{\textheight}}
  }{\textheight}%
}{0.5ex}}%
\stackon[1pt]{#1}{\tmpbox}%
}
\definecolor{darkgreen}{rgb}{0,0.5,0}
\definecolor{darkblue}{rgb}{0,0,0.7}
\definecolor{darkred}{rgb}{0.9,0.1,0.1}
\definecolor{lightblue}{rgb}{0,0.51,1}
\newtheorem{theorem}{Theorem}
\newtheorem{proposition}{Proposition}
\newtheorem{lemma}[proposition]{Lemma}
\theoremstyle{remark}
\newtheorem{remark}[proposition]{Remark}
\theoremstyle{definition}
\newtheorem{definition}[proposition]{Definition}
\numberwithin{equation}{section}
\numberwithin{proposition}{section}
\newcommand{\R}{\mathbb{R}}
\newcommand{\ep}{\varepsilon}
\newcommand{\Rey}{\texttt{Re}}
\newcommand{\Ro}{\texttt{Ro}}
\newcommand{\Ma}{\texttt{Ma}}
\newcommand{\Fr}{\texttt{Fr}}
\renewcommand{\subset}{\subseteq}
\renewcommand{\bar}{\overline}
\renewcommand{\div}{\divg}
\newcommand{\tsl}{\textsl}
\newcommand{\mbb}{\mathbb}
\newcommand{\mc}{\mathcal}
\newcommand{\veps}{\varepsilon}
\newcommand{\vphi}{\varphi}
\newcommand{\oline}{\overline}
\newcommand{\ra}{\rightarrow}
\newcommand{\g}{\gamma}
\newcommand{\z}{\zeta}
\newcommand{\de}{\delta}
\newcommand{\Q}{\mathbb{Q}}
\renewcommand{\div}{{\rm div}\,}
\newcommand{\curl}{{\rm curl}\,}
\newcommand{\Id}{{\rm Id}\,}
\def\d{\partial}
\begin{document}
	
\author[E. Bocchi]{Edoardo Bocchi}
\address[E. Bocchi]{Universidad de Sevilla, Departamento An\'alisis Matem\'atico \& Instituto de Matem\'aticas Universidad de Sevilla, Avenida Reina Mercedes, 41012 Sevilla, Espa\~{n}a}
\email{ebocchi@us.es}

\author[F. Fanelli]{Francesco Fanelli}
\address[F. Fanelli]{Univ. Lyon, Universit\'e Claude Bernard Lyon 1, CNRS UMR 5208, Institut Camille Jordan, 43 blvd. du 11 novembre 1918, F-69622 Villeurbanne cedex, France}
\email{fanelli@math.univ-lyon1.fr}

\author[C. Prange]{Christophe Prange}
\address[C. Prange]{Universit\'e de Bordeaux, CNRS UMR 5251, Institut de Math\'ematiques de Bordeaux, 351 Cours de la Liberation, 33405 Talence cedex, France}
\email{christophe.prange@math.u-bordeaux.fr}

\title[Anisotropy and stratification in compressible fluids]{Anisotropy and stratification effects in the dynamics of \\ fast rotating compressible fluids}

\keywords{Compressible Navier-Stokes equations; rotating fluids; anisotropy; stratification; Ekman boundary layers; relative entropy; maximal regularity}
\subjclass[2010]{35Q86,  
35Q30, 
76D50, 
76N10, 
35B40, 
76M45 
}
\date{\today}

\begin{abstract}
 The primary goal of this paper is to develop robust methods to handle two ubiquitous features appearing in the modeling of geophysical flows: (i) the \emph{anisotropy} of the viscous stress tensor,
(ii) \emph{stratification} effects. We focus on the barotropic Navier-Stokes equations with Coriolis and gravitational forces.
Two results are the main contributions of the paper. Firstly, we establish a local well-posedness result for finite-energy solutions, \textsl{via} a maximal regularity approach.
This method allows us to circumvent the use of the effective viscous flux, which plays a key role in the weak solutions theories of Lions-Feireisl and Hoff, but seems to be restricted
to isotropic viscous stress tensors. Moreover, our approach is sturdy enough to take into account non constant reference density states; this is crucial when dealing with stratification effects. Secondly, we study the structure of the solutions to the previous model in the regime when the Rossby, Mach and Froude numbers are of the same order of magnitude.
We prove an error estimate on the relative entropy between actual solutions and their approximation by a large-scale quasi-geostrophic
flow supplemented with Ekman boundary layers. Our analysis holds for a large class of barotropic pressure laws. 
\end{abstract}

\maketitle

\section{Introduction} \label{s:intro}

This paper is devoted to the study of a class of barotropic Navier-Stokes systems. Our focus is on developing robust methods to handle two ubiquitous effects appearing in the modeling
of geophysical fluids: (i) the strong \emph{anisotropy} of the viscous stress tensor, (ii) \emph{stratification} effects. As for the first aspect, we consider systems with anisotropic viscosity tensors
\begin{equation}\label{e.anisvisco}
\Delta_{\mu,\ep}:=\mu\Delta_h+\ep\partial_3^2=\mu(\partial_1^2+\partial_2^2)+\ep\partial_3^2\,,
\end{equation}
where the parameters $\mu$ and $\veps$ are dimensionless numbers such that $\ep\ll\mu$. Concerning the second point, we consider highly rotating fluids with a strong Coriolis force $\frac1\ep e_3\times \rho u$ and a strong gravitational potential $\frac{1}{\ep^2}\rho \nabla G=-\frac{1}{\ep^2}\rho e_3$, where $u=u(x,t)\in\R^3$ denotes the velocity field of the fluid, $\rho=\rho(x,t)\in\R$ represents the density of the fluid and $e_3=(0,0,1)^T$ is the unit vertical vector. 
We handle barotropic fluids, so the pressure of the fluid is assumed to be a function of the density only, i.e. $P=P(\rho)$; see \eqref{hyp:p} for precise assumptions on the pressure law. We consider the simplest possible geometrical set-ups and scalings enabling to study non trivial phenomena, such as boundary layers and vertical stratification. \\
\indent
Our goal in this paper is twofold. 
The first part of the paper is concerned with the existence and uniqueness of strong solutions for the barotropic Navier-Stokes equations with potential force $\nabla G$:
\begin{equation}
\label{eq:wp_nse}
\left\{ 
\begin{aligned}
&\partial_t\rho+\nabla\cdot(\rho u)=0,\\ 
& \partial_t(\rho u)+\nabla\cdot(\rho u\otimes u)+\nabla P(\rho)=\Delta_{\mu,\ep} u+\lambda\nabla(\nabla\cdot u)+\rho \nabla G\,.
\end{aligned}
\right.
\end{equation}
The term $\nabla G$ is responsible for stratification effects, thus the equilibrium density $\bar\rho$ of the previous system becomes non-constant.
As a consequence, there are two main difficulties to handle in the well-posedness study for system \eqref{eq:wp_nse}: on the one hand, 
the strong anisotropy of the viscous tensor, see \eqref{e.anisvisco}; on the other hand, the fact that $\bar\rho$ introduces variable coefficients in the equations (see more details below). 
Our main result in this direction is Theorem \ref{th:wp} on page \pageref{th:wp}.
We set system \eqref{eq:wp_nse} in the simple space-time domain $\R^3\times (0,T)$, with $T>0$; domains with boundaries, such as $(\R^2\times(0,1))\times (0,T)$,
may be handled by the same method, at the price of more technical difficulties. Here we work with fixed values of the parameters $(\mu,\lambda,\ep)$, and do not keep track
of how the estimates depend on  them: the existence of solutions to \eqref{eq:wp_nse} is a challenge in itself.
Our well-posedness result still holds if one  incorporates a Coriolis force $\rho e_3\times u$ in the left hand side of \eqref{eq:wp_nse}. \\
\indent
The second part of the paper is devoted to the asymptotic analysis of the barotropic Navier-Stokes equations in presence of fast rotation and gravitational stratification.
We consider the following system:
\begin{equation}
\label{e.nse}
\left\{\!
\begin{aligned}
&\partial_t\rho+\nabla\cdot(\rho u)=0,\\ 
& \partial_t(\rho u)+\nabla\cdot(\rho u\otimes u)+\frac\rho\ep e_3\times u+\frac{\nabla P(\rho)}{\ep^2}=\Delta_{\mu,\ep} u+\lambda\nabla(\nabla\cdot u)+\frac{\rho}{\ep^2} \nabla G.
\end{aligned}
\right.
\end{equation}
set on the strip $(\R^2\times(0,1))\times (0,T)$, with the no-slip boundary conditions
\begin{equation}\label{hyp:bound-cond}
u=0\qquad\mbox{ at }\quad x_3=0,\,1\,.
\end{equation}
Here $G$ is the gravitational potential, i.e. $G(x_3)=-x_3$. 
We describe the structure of weak solutions 
in the limit $\ep\rightarrow 0$ for well-prepared data, analyze the Ekman boundary layers and their effect on the limit quasi-geostrophic flow, and prove quantitative bounds based on
relative entropy estimates. Our main result in this direction is Theorem \ref{prop.main} on page \pageref{prop.main}. Our results hold for a large class of monotone pressure laws. Here our focus is on the asymptotic behavior for a family of weak solutions, under the assumption that such global-in-time weak solutions exist. \\
\indent
The two parts are connected. Indeed, the quantitative stability estimates obtained in the second part lay the ground for a large-time well-posedness result for system \eqref{e.nse}
in the limit when $\ep\rightarrow 0$. Inspired by previous results for incompressible flows  \cite{Gall,Char,PRST,Scrob}, we believe that strong solutions may be constructed
for large data close to the two-dimensional limit quasi-geostrophic flow, by using a variation of the well-posedness result of the first part.
Though, dealing with the compressible system \eqref{e.nse} requires much more work, which is left for a subsequent paper.

\subsection{Modelling of geophysical flows}

We consider mid-latitudes and high-latitudes motions of fast rotating compressible fluids, typically the Earth's atmosphere or oceans.
System  \eqref{e.nse} is a particular case of the general non-dimensional system
\begin{equation}
\label{e.nsenondim}
\left\{ 
\begin{aligned}
&\partial_t\rho+\nabla\cdot(\rho u)=0,\\ 
&\partial_t(\rho u)+\nabla\cdot(\rho u\otimes u)-\frac{1}{\Rey_h}\Delta_h u-\frac{1}{\Rey_3}\partial_3^2 u-\frac{1}{\Rey}\nabla\nabla\cdot u\\
&\qquad\qquad\qquad\qquad\qquad+\frac{1}{\Ro}e_3\times (\rho u)+\frac{1}{\Ma^2}\nabla P(\rho)=\frac{1}{\Fr^2}\rho\nabla G\,.
\end{aligned}
\right.
\end{equation}
As usual, the Reynolds number measures the ratio of inertial forces to viscous forces. System \eqref{e.nsenondim} modeling large-scales geophysical flows has, in particular,
a horizontal Reynolds number $\Rey_h=\frac{UL}{\nu_h}$ and a vertical Reynolds number $\Rey_3=\frac{UL}{\nu_3}$ that may be of different orders of magnitude;
see below the comments about the anisotropy in the viscous stress tensor. Here $U$ represents the typical speed of the flow, $L$ the typical length, $\nu_h$ is the horizontal
viscosity and $\nu_3$ the vertical one. The Mach number is defined as $\Ma=\frac{U}{c}$, where the constant $c$ 
 is the propagation speed of 
 acoustic waves. 
For strong jet streams near the tropopause $U=50\, m\cdot s^{-2}$, which corresponds to $\Ma=0.15$, see \cite{Klein05}. The Rossby number, defined as 
$\Ro=\frac{U}{2\Omega L}$,
measures the effect of the Earth's rotation; here $\Omega=7.3\cdot 10^{-5}s^{-1}$ is the
module of the Earth's angular velocity. 
In the case of the Gulf Stream, the length $L=100\, km$ and $U=1\, m\cdot s^{-1}$ are smaller than the typical oceanic scales, the Rossby number is about $\Ro=0.07$. Notice that here we consider the f-plane approximation of the Coriolis force.

\noindent\underline{Stratification.} The gravitational force deriving from the geopotential {$G=-g x_3$}
tends to lower regions of fluid with higher density and raise regions of fluid with
lower density.  
In the equilibrium configuration, the density profile decreases with respect to the vertical direction. The Froude number is then defined as 
$\Fr=\frac{U}{\sqrt{gH}}$, 
where $g=9.81\, m\cdot s^{-2}$ is the acceleration of gravity. It measures the ratio of inertial forces 
of a fluid element to its weight. The centrifugal force also derives from a potential. It is often neglected in models for the atmosphere \cite{CR, F-N_CPDE},
but in certain regimes it can have a dramatic effect. The mathematical analysis of the centrifugal force poses different challenges that we do not dwell upon in the present work.

\noindent\underline{Anisotropy.} In general the horizontal and vertical viscosities are not equal, in particular when dealing with large-scale motions of geophysical flows. 
For instance, in the ocean the horizontal turbulent viscosity $\nu_h$ ranges from $10^3$ to $10^8\, cm^2\cdot s^{-1}$, while the vertical viscosity $\nu_3$ is much smaller and ranges from $1$
to $10^3\, cm^2s^{-1}$. A justification of this fact can be seen in: (i) the anisotropy between
the horizontal and the vertical scales of the flows, and (ii) the stabilizing effect of the Coriolis force, which makes the large-scale motion be almost two-dimensional
(see also Section \ref{sec.quant}). Hence, a frequent (and crude) modeling assumption is to suppose that the diffusion in the vertical direction is much weaker
than the horizontal viscosity, which is enhanced by turbulent phenomena. For further insights about the physics of anisotropic diffusion, we refer to \cite{TZ}, in particular to equation (2.122),
to Chapter 4 of \cite{Ped}, to \cite{B-D-GV} and \cite{C-D-G-G_2002}.

\noindent\underline{Scaling.} In \eqref{e.nse}, the dimensionless number $\ep$ denotes the Rossby number, which measures
the strength of the rotation. We consider the scaling where the Rossby, the Mach and the Froude numbers are of the same order of magnitude, i.e. $\Ro=\Ma=\Fr=\ep$.
This is the richest scaling, since the effects due to the rotation are in balance with the compressible and gravitational effects.
Notice however that other scalings are considered in the physical literature \cite{Klein10}, for application to meteorology, as well as in the mathematical literature, see e.g. \cite{F-G-GV-N}, \cite{F-N_CPDE}.

\noindent\underline{Ekman layers.} Ekman boundary layers are regions near horizontal boundaries with no-slip boundary condition where viscous effects balance the Coriolis force.
The thickness of these boundary layers (see Part I of \cite{C-D-G-G}) is
\begin{equation*}
\delta_E=\left(\frac{\nu_3}{\Omega}\right)^\frac12,
\end{equation*}
which does not depend on the velocity. Remark that the faster
the rotation, the smaller is the layer affected by viscosity. See also Chapter 8 of \cite{CR}.

\noindent\underline{Pressure laws.} 
We consider barotropic flows, for which the pressure is a function of the density only.
A typical example is that of Boyle's laws $P(\rho)=a\rho^\gamma$, with $a>0$ and $\gamma\geq 1$. 
For the precise definition of the pressure law, we refer to \eqref{hyp:p}.

\subsection{Mathematical challenges related to anisotropy and stratification}
\label{sec.chall}

We outline here some aspects of the study of compressible viscous fluids. We focus on two points in particular: (i) well-posedness and the difficulties related to the anisotropy in the viscosity, (ii) asymptotic analysis in the presence of stratification.

\noindent\underline{Well-posedness.} For fluids modelled by the incompressible Navier-Stokes equations,
the fact that the viscous stress tensor is isotropic or anisotropic does not affect the well-posedness theory. One can prove the existence of weak (Leray-Hopf),
mild or strong solutions \emph{regardless} of the structure of the viscous stress tensor. \\
\indent
For compressible fluids modelled by the compressible Navier-Stokes system though, anisotropy represents a major hurdle.
Interestingly, the obstacle has similar roots for several well-posedness theories of weak solutions.  \\
\indent
In the isotropic case, $\mu=\ep$ in \eqref{eq:wp_nse}, one has a pointwise relation between the pressure and the divergence of $u$. Indeed, applying the divergence to the momentum equation
and using the algebraic relation $\nabla\cdot(\nabla\nabla\cdot u)=\Delta\nabla\cdot u=\nabla\cdot\Delta u$, one gets 
\begin{equation}\label{e.eqeffflux}
\Delta\big((\mu+\lambda)\nabla\cdot u-P(\rho)+P(1)\big)=\nabla\cdot\big(\rho\partial_tu+\rho u\cdot\nabla u\big).
\end{equation}
This suggests to define the following quantity
\begin{equation*}
F:=(\mu+\lambda)\nabla\cdot u-P(\rho)+P(1),
\end{equation*}
which is dubbed the \emph{effective viscous flux}. According to \eqref{e.eqeffflux}, one obtains the relation 
\begin{equation}\label{e.eqF}
\Delta F=\nabla\cdot\big(\rho\partial_tu+\rho u\cdot\nabla u\big).
\end{equation}  
The property \eqref{e.eqF} is key to the existence of global finite-energy weak solutions in the Lions \cite{PLLions} and Feireisl \cite{F-N,N-S} theory on the one hand,
and to the existence of global weak solutions {with bounded density} in the Hoff theory \cite{Hoff} (see also \cite{D-F-P}). \\

\indent
In the anisotropic case $\mu\neq \ep$, the above analysis breaks down, and the relation between $\nabla\cdot u$ and $P(\rho)$ becomes nonlocal. Indeed, \eqref{e.eqF} becomes 
\begin{equation*}
(\Delta_{\mu,\ep}+\lambda\Delta)\nabla\cdot u-\Delta P=\nabla\cdot\big(\rho\partial_tu+\rho u\cdot\nabla u\big),
\end{equation*}
so that the definition of a modified effective flux should read
\begin{equation*}
F_{ani}:=\Delta^{-1}(\Delta_{\mu,\ep}+\lambda\Delta)\nabla\cdot u-P(\rho)+P(1).
\end{equation*}
The nonlocal operator $\Delta^{-1}(\Delta_{\mu,\ep}+\lambda\Delta)$ changes the picture dramatically. In view of the existence of Hoff-type solutions with bounded density
\cite{Hoff,D-F-P}, one of the major flaws of $F_{ani}$ is the lack of boundedness of $\Delta^{-1}(\Delta_{\mu,\ep}+\lambda\Delta)$ on $L^\infty$.
Similarly,
for the existence of global finite-energy weak solutions, the nonlocality is a major obstacle to getting the compactness of a sequence of approximate solutions; see \cite{B-D-GV,B-J}.
In the breakthrough work of Bresch and Jabin \cite{B-J}, a totally new compactness criterion was proved that enables to prove the existence of global weak solutions to the compressible system \eqref{eq:wp_nse} in the anisotropic case. There remains a restriction that $|\mu-\ep|<\lambda-\frac\mu3$ which is compatible with the modeling of large-scale geophysical flows. A more important limitation of the result in \cite{B-J} is on the pressure law: $\gamma\geq 2 + \frac{\sqrt{10}}2$, where $\gamma$ is defined in \eqref{hyp:p}. On that subject Bresch and Burtea proved recently in \cite{Bre-Bur} the global existence of weak solutions for the quasi-stationary compressible Stokes equations and to the stationary compressible Navier-Stokes system \cite{Bre-Bur2}
with an anisotropic viscous tensor. Their approach is based on the control of defect measures.

The anisotropy prompts us to consider the framework of strong solutions, in particular those with minimal regularity assumptions as in \cite{D-F-P}. In that perspective, a further challenge for the well-posedness theory is the presence of non constant reference density states, due to the gravitational term. Indeed, the fact that the densities are perturbations 
of a \emph{constant} state plays a major role in the analysis of \cite{D-F-P}. This question seems to remain broadly unexplored in general. In our work, we are able to incorporate non constant reference density states in the approach of \cite{D-F-P}.

\noindent\underline{Asymptotic analysis.} 
We review here some of the literature concerned with the quantitative analysis of viscous barotropic fluids in high rotation (for an overview of this topic for
incompressible fluids, we refer to \cite{C-D-G-G}). For systems of the type \eqref{e.nse}, results on the combined low Mach and low Rossby limits were obtained in several directions:
well-prepared data (data close to the kernel of the penalization operator, see \eqref{e.wellpre} for what it means in our context), ill-prepared data, slip or no-slip boundary data, with or without stratification (centrifugal force or gravitational force), different scalings regimes. We do not attempt to be exhaustive here, but select some works which are relevant to our study here. \\
\indent
In the well-prepared case, paper \cite{B-D-GV} studies the limit for the same scaling as in \eqref{e.nse}, with no-slip boundary conditions and $G=0$. They study the Ekman layers and prove
stability estimates in the limit $\ep\rightarrow 0$ in the case of pressure laws with $\gamma=2$. As far as we know, this seems to be the only work concerned with the study of Ekman
layers for compressible fluids. Extending it to more general pressure laws, and taking into account gravitational stratification effect is an obvious motivation for our present work.
The analysis with a gravitational potential was considered in \cite{F-L-N}, for fluids slipping on the boundary, using the relative entropy method. \\

\indent
In the ill-prepared case, the fact that the initial data is away from the kernel of the penalization operator is responsible for the propagation of high frequency acoustic-Poincar\'e waves.
Different scaling regimes of $\Ma$ and $\Ro$ were analyzed in the works \cite{F-G-N,F-G-GV-N,F-N_CPDE}. All these works are concerned with fluids satisfying the slip boundary condition, hence
no boundary layers are needed in the asymptotic expansions. Let us point out that the work \cite{F-G-GV-N} manages to handle the centrifugal force.

\subsection{Novelty of our results}
\label{sec.outline}

We comment here on the main theorems of the paper.

\noindent\underline{Well-posedness of system \eqref{eq:wp_nse}: Theorem \ref{th:wp} on page \pageref{th:wp}.} We prove the short-time existence of finite-energy solutions
to system \eqref{eq:wp_nse}. We introduce a simple and sturdy method based on a priori bounds obtained via maximal regularity estimates,
following the approach of \cite{D-F-P, Shi18}. However, our techniques are robust enough to deal with both effects mentioned above: (i) the anisotropy in the viscous stress tensor,
(ii) the presence of a non-constant reference density state. Thus, our result represents a generalization of \cite{D-F-P, Shi18} in both directions. 
However, as already explained in Subsection \ref{sec.chall}, the anisotropy makes it impossible to use Hoff's effective viscous flux as in \cite{D-F-P},
so we need to look for higher regularity estimates for the density,
in order to get compactness for passing to the limit in the pressure term. For this reason, we need to require some regularity on the initial data: roughly,
the initial density $\rho_{in}$ is sufficiently well-localised around the reference profile $\bar\rho$, namely $\rho_{in}-\bar\rho\in H^2$, while the initial velocity $u_{in}$ is taken in $H^{3/2+}$.
So, the solutions that we construct are strong solutions with finite energy; roughly, they are halfway between the Hoff solutions \cite{Hoff,D-F-P} with bounded density and
the strong solutions of Matsumura-Nishida \cite{M-N} (which require $\rho_{in}-1\,\in\,H^3$). \\
\indent
We emphasize that the initial density $\rho_{in}$ can be taken close to an arbitrary reference density profile $\bar\rho=\bar\rho(x_3)\in W^{3,\infty}(\R)$. Thus, setting
$G=G(x_3)=H'\big(\bar\rho(x_3)\big)$, with $H$ defined in \eqref{def:H} below, we infer that $\bar\rho$ is a static state of system \eqref{eq:wp_nse},
namely $\bar\rho$ satisfies the logistic equation
\begin{equation}\label{e.eqpotG}
\nabla P(\bar\rho)=\bar\rho\nabla G.
\end{equation} 
However, the smallness condition rests only on the quantity $\|\rho_{in}-\bar\rho\|_{L^\infty}$, and not on higher-order derivatives. Finally, we point out that the assumption
$\bar\rho=\bar\rho(x_3)$ is made only for modelling purposes (we have in mind the case when $G$ is the gravitational potential), but our method works also in
the more general situation $\bar\rho=\bar\rho(x),\,x\in\R^3$. So, our theorem opens the way to achieving the well-posedness of systems with
stratification effects, such as \eqref{e.nse}, in the strip $\R^2\times (0,1)$
with the physical gravitational potential $G=-x_3$.

\noindent\underline{Asymptotic analysis of system \eqref{e.nse}: Theorem \ref{prop.main} on page \pageref{prop.main}.} We build an asymptotic expansion for
the solutions of the compressible system \eqref{e.nse} when $\ep\rightarrow 0$ and prove quantitative estimates on the errors.
We consider well-prepared initial data, i.e. close to the kernel of the penalization operator. The scaling $\Ro=\Ma=\Fr=\ep$ which is considered in \eqref{e.nse} is the richest scaling,
in the sense that the Coriolis force, the pressure and the gravitational force balance each other. In addition, we analyze the effect of boundary layers on the limiting two-dimensional
quasi-geostrophic equation. This limit equation, see \eqref{e.qg} below, represents the large-scale dynamics of the bulk flow.
It is a two-dimensional incompressible Navier-Stokes equation written in terms of the stream function $Q$, where $u=\nabla^\perp Q$ is the limit velocity.
Frictional effects dissipate energy in the Ekman boundary layer flow, so a damping term appears in \eqref{e.qg}. Such effects were already pointed out for incompressible
\cite{Masm_2000,C-D-G-G_2002,C-D-G-G,GV03} or compressible \cite{B-D-GV} fluids in high rotation. \\
\indent
As far as we know, that paper \cite{B-D-GV} is the only work dealing with the asymptotic analysis of compressible fluids in high rotation in the presence of Ekman layers.
Our result extends the state of the art in two main directions: (i) we take into account stratification effects due to gravitation, hence we handle non constant reference density states;
(ii) we consider general pressure laws $P(\rho)\sim\rho^\g$, with $\gamma\geq 3/2$ (see \eqref{hyp:p} below).
Concerning (i), let us stress that our work seems the first one able to tackle the combined effect of the gravitational force and boundary layers.
As for (ii), work \cite{B-D-GV} handles the case $\gamma=2$. Incidentally, the threshold $3/2$ for the number $\gamma$ is the same as the one for the Lions-Feireisl theory of
weak solutions theory. \\
\indent
Our approach is based on relative entropy estimates for system \eqref{e.nse}, see \cite{Germ,F-J-N,F-N-S,F-L-N}.
The progress achieved in this paper is made possible thanks to the introduction of a simple tool, which seems to be new in this context:
we rely on \emph{anisotropic Sobolev embeddings}, see Lemma \ref{lem.ani} below. This enables to compensate for the lack of coercivity for
$\partial_3u_h$, due to the strong anisotropy in the Lam\'e operator \eqref{e.lame}. Doing so, we are able to extend the range of values for the parameter $\gamma$,
and also to improve the quantitative error bounds.

\subsection{Outline of the paper}

The paper consists of two parts. The first one, treated in Subsection \ref{sec.wp}, is devoted to the proof of the well-posedness result for system \eqref{eq:wp_nse}.
The main result is Theorem \ref{th:wp} on page \pageref{th:wp}. The proof relies on maximal regularity estimates for a parabolic equation related to an
anisotropic and variable coefficients {Lam\'e operator}; see Proposition \ref{p:DFP}. 
The second part, which is the matter of Subsection \ref{sec.quant}, is concerned with the proof of the quantitative estimates for \eqref{e.nse} in the limit $\ep\rightarrow 0$.
We first build an expansion based on a formal multi-scale analysis. Second, we derive a relative entropy inequality. Finally, we carry out the quantitative estimates,
using in particular the anisotropic Sobolev embeddings of Lemma \ref{lem.ani}.

\subsection{Main notations and definitions}

Since our interest is in real fluid flows, the whole paper is written in space dimension $d=3$.
Notice though that some results, in particular those of Subsection \ref{sec.wp} such as the maximal regularity statement, can easily be extended to higher-dimensional systems.
{The domain $\Omega$ denotes an open set, usually $\Omega=\R^3$ or $\R^2\times(0,1)$ in this paper.} \\
\indent
When appropriate, we use Einstein's convention on repeated indices for summation. \\
\indent
Given a two-dimensional vector $v\,=\,(v_1,v_2)$, we define $v^\perp:=(-v_2,v_1)$. Given a vector field $v\in\R^3$, we will often use the notations $\nabla\cdot v$ and $\nabla\times v$ to denote respectively $\div v$ and $\curl v$. 
For a vector $x\in\R^3$, we often use the notation $x=(x_h,x_3)\in\R^3$ to denote the horizontal component $x_h\in\R^2$ and the vertical component $x_3\in\R$.
According to this decomposition, we define the horizontal differential operators $\nabla_h$, $\Delta_h$ and $\nabla_h\cdot$ as usually; 
we also set $\nabla_h^\perp:=(-\partial_2,\partial_1)$. These operators act just on the $x_h$ variables. Notice that the third
component of the vector $\nabla\times v$ is $\d_1v_2-\d_2v_1$: {this quantity will be denoted by $\nabla_h^\perp\cdot v_h$.} \\
\indent
We introduced $\Delta_{\mu,\ep}$ as in \eqref{e.anisvisco}. Similarly, we define the modified 
gradient operator $\nabla_{\mu,\veps}\,:=\,\bigl(\sqrt{\mu}\d_1,\sqrt{\mu}\d_2,\sqrt{\veps}\d_3\bigr)$. The anisotropic \emph{Lam\'e operator} $\mc L$ is defined by
\begin{equation}\label{e.lame}
\mc Lu\,=\,-\Delta_{\mu,\veps}u\,-\,\lambda\nabla\nabla\cdot u\,.
\end{equation}

\indent
Throughout this paper, given a Banach space $X$ and a sequence $(a^\veps)_\veps$ of elements of $X$, the notation $(a^{\veps})_\veps\,\subset\,X$ is to be understood as the fact
that the sequence~$(a^{\veps})_\veps$ is uniformly bounded in~$X$.
We will often denote, for any~$p\in[1,\infty]$ and any Banach space~$X$,
$L^p_T(X)\,:=\,L^p\bigl((0,T);X(\Omega)\bigr)$. When $T=+\infty$, we will simply write $L^p(X)$. \\
\indent 
{For the definition and basic properties of Besov spaces, we refer to Chapter 2 of \cite{B-C-D}, or to Subsections 2.2 and 2.3 of \cite{D-F-P}.}

\noindent\underline{Pressure law.} We consider barotropic flows, for which the pressure $P$ is supposed to be a smooth function of the density only. We assume
(see e.g. \cite{F-N-S}, \cite{F-G-N}, \cite{F-J-N}, \cite{F-N_CPDE}) that $P\in\mc C\bigl([0,\infty)\bigr)\cap\mc C^3\bigl((0,\infty)\bigr)$ enjoys
\begin{equation}\label{hyp:p}
P(0)=0\,,\qquad P'(\rho)>0\quad\forall\,\rho>0\,,\qquad\lim_{\rho\ra\infty}\frac{P'(\rho)}{\rho^{\g-1}}\,=\,a>0\,,
\end{equation}
for some $\g\geq1$. Given $P$, we define the \emph{internal energy function} $H$ by the formula
\begin{equation}\label{def:H}
H(\rho)\,:=\,\rho\int_1^\rho\frac{P(z)}{z^2}\,dz\qquad\qquad\mbox{ for all }\qquad\rho\in\,(0,\infty)\;.
\end{equation}
Notice that the relation $\rho\, H''(\rho)\,=\,P'(\rho)$ holds for all $\rho>0$.

\section{A well-posedness result in the presence of a strongly anisotropic viscous stress tensor and stratification}
\label{sec.wp}

In this section we show a well-posedness result for the barotropic Navier-Stokes system \eqref{eq:wp_nse}. 
As explained in Subection \ref{sec.chall}, there are two main difficulties. The first one is to handle 
the anisotropy of the viscous stress tensor. It prevents one from using classical compactness techniques to prove the existence of weak solutions
having finite energy (see e.g. \cite{PLLions,N-S,F-N-Petz} and the comments in \cite{B-D-GV,B-J}). It is also a major obstacle to the use of the effective flux \eqref{e.eqeffflux} as in \cite{Hoff,D-F-P}. The second one is the non-constant reference density state $\bar\rho$, due to the potential force $\nabla G$. \\
\indent
Our approach is reminiscent from the works \cite{D-F-P,Shi18}. It is based on maximal regularity estimates for the velocity field. This approach enables us to fully exploit the parabolic gain of regularity due to the momentum 
equation, and to use it in the mass equation in order to transport higher-order Sobolev norms of the density function. Moreover, it allows us to consider non constant density reference states, which is crucial in view
of studying stratification effects, see system \eqref{e.nse} and Section~~\ref{sec.quant}. \\

\indent
It is not clear whether or not the whole method of the paper \cite{D-F-P} works in presence of an anisotropic Lam\'e operator, 
since it deeply uses Hoff's effective flux \eqref{e.eqeffflux} and algebraic cancellations appearing in its equation, see \eqref{e.eqF}.
Thus, compared to \cite{D-F-P}, we will work with solutions in the energy space, so with less integrability, but which are more regular.
We are also able to deal with parabolic operators with variable coefficients when applying the maximal regularity results.\\

\indent
Our main result in this direction is the following statement. Our aim is to give a streamlined method for well-posedness, appropriate for proving the existence and uniqueness of
finite-energy solutions in the presence of: (i) an anisotropic viscous stress tensor given by \eqref{e.anisvisco}, (ii) non constant reference density states. A small compromise consists
on the one hand on the fact that we do not strive for optimality in the assumptions of the theorem, and on the other hand on the fact that we work in the space domain $\Omega=\R^3$.
Notice however that the case of more general domains can be treated with some technical adaptations.

\begin{theorem} \label{th:wp}
{Let $\gamma\geq 1$.} Let $\bar\rho\in W^{3,\infty}(\R)$. 
Assume that $\bar\rho$ is uniformly bounded from below, i.e. $\bar\rho\geq\kappa>0$. We define the potential by $G=H'(\bar\rho)$, where $H$ is defined by \eqref{def:H}.
Then for any $\eta>0$ which verifies
\begin{equation} \label{eq:d-small}
\eta\,\leq\, \min(1/(8C_0),\kappa/8),
\end{equation}
where $C_0$ is the constant given by Proposition \ref{p:DFP}, the property below holds. \\
\indent
Consider system \eqref{eq:wp_nse}, supplemented with the initial datum $\big(\rho,u\big)_{|t=0}\,=\,\big(\rho_{in},u_{in}\big)$. 
For any $\big(\rho_{in},u_{in}\big)$, with $\rho_{in}-\bar\rho\in H^2$ and $u_{in}\in B^{3/2}_{2,4/3}$ and such that
$$
\left\|\rho_{in}\,-\,\bar\rho\right\|_{L^\infty}\,\leq\,\eta\,,
$$
there exist a time $T^*{(\gamma,\mu,\ep,\lambda,\|\bar\rho\|_{W^{3,\infty}},\kappa,\|u_{in}\|_{B^{3/2}_{2,4/3}},\|\rho_{in}-\bar\rho\|_{H^2})}>0$
and a unique solution $\big(\rho,u\big)$ to \eqref{eq:wp_nse} on $[0,T^*]\times\Omega$, such that:
\begin{enumerate}
 \item $\rho-\bar\rho\,\in\,\mc C_{T^*}(H^2)$, with $\left\|\rho\,-\,\bar\rho\right\|_{L^\infty_{T^*}(L^\infty)}\,\leq\,4\,\eta$;
 \item $u\in L^\infty_T(L^2)\cap L^2_{T^*}(L^\infty)$, with in addition $\nabla u\in L^4_{T^*}(L^2)\cap L^2_{T^*}(L^\infty)$, $\nabla^2u\in L^4_{T^*}(L^2)$, $\nabla^3u\in L^{4/3}_{T^*}(L^2)$
 and $\d_tu\in L^{4/3}_{T^*}(H^1)$;
 \item $\big(\rho,u\big)$ satisfies the classical energy inequality (see  estimate \eqref{est:energy} below).\end{enumerate}
\end{theorem}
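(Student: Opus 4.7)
My plan is to run a Banach fixed-point argument, in the spirit of \cite{D-F-P, Shi18}, organised so that the anisotropy of $\mc L$ and the non-constant reference state $\bar\rho$ are both handled through the maximal regularity estimate of Proposition \ref{p:DFP}. I would first pass to the perturbative formulation $\sigma:=\rho-\bar\rho$ and exploit the static balance $\nabla P(\bar\rho)=\bar\rho\,\nabla G$ (which follows from $G=H'(\bar\rho)$ and the identity $\rho\,H''(\rho)=P'(\rho)$, see \eqref{e.eqpotG}) in order to rewrite $\nabla P(\rho)-\rho\nabla G = \nabla\bigl(P(\sigma+\bar\rho)-P(\bar\rho)\bigr)-\sigma\nabla G$. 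Using the mass equation to turn the convective derivative of $\rho u$ into $\rho\,(\d_t u+u\cdot\nabla u)$, the system \eqref{eq:wp_nse} becomes
\begin{equation*}
\left\{\begin{aligned}
&\d_t\sigma + u\cdot\nabla\sigma + (\sigma+\bar\rho)\,\nabla\cdot u + u_3\,\bar\rho' = 0,\\
&(\sigma+\bar\rho)\,\d_t u + \mc L u = -(\sigma+\bar\rho)\,u\cdot\nabla u - \nabla\bigl(P(\sigma+\bar\rho)-P(\bar\rho)\bigr) + \sigma\nabla G,
\end{aligned}\right.
\end{equation*}
with initial data $\sigma_{|t=0}=\rho_{in}-\bar\rho\in H^2$, $\|\sigma_{|t=0}\|_{L^\infty}\leq\eta$, and $u_{|t=0}=u_{in}\in B^{3/2}_{2,4/3}$.

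Next, I would introduce the map $\Phi\colon(\tilde\sigma,\tilde u)\mapsto(\sigma,u)$ on a ball $E_{T^*}$ of the solution space described in items~(1)--(2), where $\sigma$ solves the linear transport equation above with $(\tilde\sigma,\tilde u)$ in the coefficients and $u$ solves the linear anisotropic parabolic equation $(\tilde\sigma+\bar\rho)\,\d_t u + \mc L u = F(\tilde\sigma,\tilde u)$, with $F$ equal to the right-hand side of the second equation above evaluated at $(\tilde\sigma,\tilde u)$. For the density, standard commutator estimates provide a Gronwall-type bound on $\|\sigma(t)\|_{H^2}$ in terms of $\|\nabla\tilde u\|_{L^1_{T^*}(H^2\cap L^\infty)}$, while the linear transport estimate in $L^\infty$ keeps $\|\sigma(t)\|_{L^\infty}$ below $4\eta$ provided $T^*$ is chosen small. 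For $u$, Proposition \ref{p:DFP} is the key input: the smallness $\|\tilde\sigma\|_{L^\infty_{T^*}(L^\infty)}\leq 4\eta\leq 1/(2C_0)$ lets me treat the variable-coefficient term $\tilde\sigma\,\d_t u$ as a perturbation of $\bar\rho\,\d_t u$ and yields
\begin{equation*}
\|\nabla u\|_{L^\infty_{T^*}(B^{1/2}_{2,4/3})}+\|(\d_t\nabla u,\nabla^3u)\|_{L^{4/3}_{T^*}(L^2)}\leq 2\,C_0\bigl(\|\nabla u_{in}\|_{B^{1/2}_{2,4/3}}+\|\nabla F(\tilde\sigma,\tilde u)\|_{L^{4/3}_{T^*}(L^2)}\bigr),
\end{equation*}
from which Besov embeddings and interpolation produce the remaining $L^4_{T^*}$ and $L^2_{T^*}(L^\infty)$ bounds stated in item~(2). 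Product and composition estimates, using $P\in\mc C^3$ and the $H^2$ control on $\sigma$, then bound $\|\nabla F\|_{L^{4/3}_{T^*}(L^2)}$ by the norms defining $E_{T^*}$, with prefactors proportional either to $\eta$ or to a positive power of $T^*$.

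For $\eta$ satisfying \eqref{eq:d-small} and $T^*$ chosen small enough (depending on the quantities listed in the statement), these estimates show that $\Phi$ maps $E_{T^*}$ into itself and is a contraction in the weaker norm $\mc C_{T^*}(L^2)\times(L^\infty_{T^*}(L^2)\cap L^2_{T^*}(H^1))$; its unique fixed point is the desired solution. The energy inequality (item~(3)) follows by testing the momentum equation against $u$ and combining with the equation for the internal energy $H(\rho)$ derived from mass conservation. Uniqueness in the full class is obtained by an $L^2$-level energy estimate on the difference of two solutions, the higher regularity of one of them being used to control the convective and pressure contributions. I expect the main difficulty to be precisely the combination of the anisotropy of $\mc L$ with the variable coefficient $(\sigma+\bar\rho)\,\d_t u$: it is this combination which rules out the effective-flux identity \eqref{e.eqF} and forces the higher-regularity framework of Proposition \ref{p:DFP}, with the smallness condition \eqref{eq:d-small} calibrated exactly so that the variable-coefficient perturbation can be absorbed on the left-hand side.
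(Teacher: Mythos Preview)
Your proposal is correct in spirit and shares the same technical core as the paper: the perturbative reformulation around $\bar\rho$, the transport estimates for $\sigma$ in $H^2\cap L^\infty$, the use of Proposition~\ref{p:DFP} with the term $\sigma\,\d_t u$ absorbed via the smallness condition \eqref{eq:d-small}, and product estimates on the forcing. The organizational difference is that the paper derives only \emph{a priori} estimates on a putative smooth solution (closing a differential inequality of the form $\mc N(T)\le C(\mathrm{data})+C\,T^{1/16}(1+\mc N(T))^3$) and defers the actual construction to a standard approximation scheme, whereas you run a Banach fixed point directly. Both routes are legitimate; yours is more self-contained, while the paper's separates the essential estimates from routine machinery.

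Two technical points in your sketch deserve care, and the paper handles them explicitly. First, Proposition~\ref{p:DFP} carries lower-order terms $\|h\|_{L^{4/3}_T(L^2)}+\|\nabla h\|_{L^{4/3}_T(L^2)}$ on its right-hand side (see Remark~\ref{r:low-order}); these cannot be absorbed by smallness of $\eta$ and instead require either the basic energy estimate (giving $u\in L^\infty_T(L^2)$, which you also need for item~(2) and which does \emph{not} follow from maximal regularity alone) together with interpolation, or a factor of $T^\alpha$. Second, when you apply Proposition~\ref{p:DFP} at the level of $\nabla u$, differentiating the equation produces an additional commutator $\d_j\tilde\sigma\,\d_t u$; since $\nabla\tilde\sigma$ is only in $H^1$, this term is not controlled by $4\eta\|\d_t u\|_{L^{4/3}_T(H^1)}$ and must be estimated separately (the paper does this via $\|\nabla\sigma\|_{L^4}\|\d_t u\|_{L^4}$, interpolation \eqref{est:interp}, and a preliminary $L^2_T(L^2)$ bound on $\d_t u$). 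Finally, for uniqueness in the full class the paper does not stay at the pure $L^2$ level: it combines an $H^1$ estimate on $\delta r$ with a further application of maximal regularity to $\delta u$ (Proposition~\ref{p:uniqueness}); your contraction in the weak norm gives uniqueness within the fixed-point ball, which together with the stated regularity class is enough, but the separate ``$L^2$-level'' argument you mention would in fact need these same refinements.
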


\begin{remark} \label{r:u_0}
Notice that we have some freedom on the regularity of the initial datum. In this sense, we take $B^{3/2}_{2,4/3}$ regularity for $u_0$ for simplicity of presentation, but this condition can be somehow
weakened. \\
Notice also that $H^s\hookrightarrow B^{3/2}_{2,4/3}$ for any $s>3/2$.
\end{remark}

The rest of this section is devoted to the proof of Theorem \ref{th:wp}. The proof is based only on elementary methods, namely energy estimates and maximal regularity. 
We will limit ourselves to showing a priori estimates for smooth solutions to system \eqref{eq:wp_nse}. The proof of existence by an approximation scheme is rather standard, see e.g. \cite{D-F-P} and references therein.

\subsection{Maximal regularity for an anisotropic Lam\'e operator} 
\label{ss:max}

Here we prove the following maximal regularity result for a parabolic equation with an anisotropic Lam\'e operator and vertical stratification due to the coefficient $\bar\rho$.
Pay attention to the fact that, in the definition of $W^{2,1}_{p_2,r_2}$ below, the indices for space and time are in reverse order, in order to stick to the classical definition
(see e.g. \cite{Ser06}). 

\begin{proposition} \label{p:DFP}
Let $\bar\rho\in W^{1,\infty}(\R)$. Assume that $\bar\rho$ is uniformly bounded from below, i.e. $\bar\rho\geq\kappa>0$. Let $\bigl((p_j,r_j)\bigr)_{j=0,1,2}$ 
satisfy  $1<p_2,r_2<+\infty$, $r_2<r_0<+\infty$, $r_2<r_1<+\infty$, $p_0\geq p_2,$ $p_1\geq p_2$, together with the relations 
$$ 
\frac{2}{r_2}\,+\,\frac{3}{p_2}\;=\;1\,+\,\frac{2}{r_1}\,+\,\frac{3}{p_1}\qquad\mbox{ and }\qquad\frac{2}{r_2}\,+\,\frac{3}{p_2}\;=\;2\,+\,\frac{2}{r_0}\,+\,\frac{3}{p_0}\,.
$$ 
Let  $h_{in}$ be in $\dot B^{s_2}_{p_2,r_2}$ with $s_2:=2-2/r_2,$ and 
let $f$ be in $L^{r_2}_{\rm loc}\bigl(\R_+;L^{p_2}(\R^d)\bigr)$. \\ 
Then, there exists a constant $C_0=C_0(\mu,\ep,\lambda,r_0,p_0,r_1,p_1,r_2,p_2,\|\bar\rho\|_{W^{1,\infty}},\kappa)>0$ such that, 
for all $\,T>0$ and all $h\in L^{r_2}_T(W^{1,p_2})$, 
with $h$ solution to the Lam\'e system
\begin{equation} \label{eq:lame}
\left\{\begin{array}{l}
\bar\rho(x_3)\d_t h\,+\,\mc L h\,=\,f \\[1ex]
h_{|t=0}\,=\,h_{in}
\end{array} \right.
\end{equation}
where $\mathcal L$ is defined by \eqref{e.lame}, 
one has the properties $h\in L^{r_0}\bigl([0,T];L^{p_0}\bigr)$ and $\nabla h\,\in\,L^{r_1}\bigl([0,T];L^{p_1}\bigr)$, together with the estimate
\begin{align}\label{est:max-reg_h}
\begin{split}
&\left\|h\right\|_{L^{\infty}_T(\dot B^{s_2}_{p_2,r_2})}\,+\,\left\|h\right\|_{L^{r_0}_T(L^{p_0})}\,+\,\left\|\nabla h\right\|_{L^{r_1}_T(L^{p_1})}\,+\,
\left\|\left(\d_th,\nabla^2h\right)\right\|_{L^{r_2}_T(L^{p_2})}\\
\leq\ &C_0
\left(\left\|h_{in}\right\|_{\dot B^{s_2}_{p_2,r_2}}\,+\,\|f\|_{L^{r_2}_T(L^{p_2})}+\|h\|_{L^{r_2}_T(L^{p_2})}+\|\nabla h\|_{L^{r_2}_T(L^{p_2})}\right).
\end{split}
\end{align}
\end{proposition}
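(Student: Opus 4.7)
The natural strategy is to prove the estimate first for the constant-coefficient version of \eqref{eq:lame} (with $\bar\rho$ replaced by a positive constant), then pass to the variable coefficient case by a freezing/perturbation argument localized in the $x_3$ variable. The extra ``lower-order'' terms $\|h\|_{L^{r_2}_T(L^{p_2})} + \|\nabla h\|_{L^{r_2}_T(L^{p_2})}$ on the right-hand side of \eqref{est:max-reg_h} are expected to arise precisely from the commutators generated by this localization.

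\textbf{Step 1: constant-coefficient maximal regularity.} I consider $\partial_t h + c^{-1}\mathcal{L} h = c^{-1} f$ on $\R_+\times\R^3$, for $c$ a constant in $[\kappa,\|\bar\rho\|_\infty]$. The symbol of $\mathcal{L}$ is the $3\times 3$ matrix
\[
m(\xi)\;=\;\bigl(\mu|\xi_h|^2+\veps\xi_3^2\bigr)\Idd_3\,+\,\lambda\,\xi\otimes\xi,
\]
which is uniformly positive definite on $\xi\neq 0$ as soon as $\mu,\veps,\lambda>0$; in particular $\mathcal L$ is a parameter-elliptic operator of order $2$ (in the standard, isotropic sense, even though the principal symbol is anisotropic). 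By classical parabolic Fourier-multiplier theory---applied either directly through Mikhlin's theorem in $L^{p_2}(\R^3)$, or via the $\mathcal R$-boundedness criterion of Weis, yielding analyticity of the semigroup generated by $-c^{-1}\mathcal L$---one obtains
\[
\|h\|_{L^{\infty}_T(\dot B^{s_2}_{p_2,r_2})}+\|(\d_t h,\nabla^2 h)\|_{L^{r_2}_T(L^{p_2})}\;\lesssim\;\|h_{in}\|_{\dot B^{s_2}_{p_2,r_2}}+\|f\|_{L^{r_2}_T(L^{p_2})},
\]
with $s_2=2-2/r_2$ the correct trace space (see e.g.\ the presentation in \cite{Ser06}). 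The $L^{r_0}_T(L^{p_0})$ and $L^{r_1}_T(L^{p_1})$ bounds for $h$ and $\nabla h$ then follow from this one by parabolic Sobolev embedding: the scaling identities assumed in the statement, $\tfrac{2}{r_2}+\tfrac{3}{p_2}=1+\tfrac{2}{r_1}+\tfrac{3}{p_1}=2+\tfrac{2}{r_0}+\tfrac{3}{p_0}$, are exactly what is needed to absorb respectively one and two derivatives of parabolic regularity.

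\textbf{Step 2: variable coefficients by freezing.} I fix a partition of unity $(\chi_k)_{k\in\Z}$ on $\R$, subordinated to intervals $I_k\subset\R$ of length $\delta$ with the usual bounds $\|\chi_k'\|_\infty\lesssim\delta^{-1}$, $\|\chi_k''\|_\infty\lesssim\delta^{-2}$. On each $I_k$ I pick a reference point $z_k$ and I use
\[
\sup_{x_3\in\supp\chi_k}|\bar\rho(x_3)-\bar\rho(z_k)|\;\leq\;\delta\,\|\bar\rho'\|_{L^\infty}.
\]
Setting $h_k:=\chi_k(x_3)\,h$ and using that $\chi_k$ depends only on $x_3$, I compute
\[
\bar\rho(z_k)\,\d_t h_k\,+\,\mathcal L h_k\;=\;\chi_k\,f\,-\,\bigl(\bar\rho(x_3)-\bar\rho(z_k)\bigr)\,\chi_k\,\d_t h\,+\,[\mathcal L,\chi_k]\,h,
\]
where the commutator $[\mathcal L,\chi_k]$ is of order one in $\nabla$ with $L^\infty$ coefficients bounded by $\|\chi_k'\|_\infty+\|\chi_k''\|_\infty$. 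Applying the constant-coefficient estimate of Step~1 to each $h_k$ with $c=\bar\rho(z_k)\geq\kappa$, summing in $k$, and choosing $\delta=\delta(\|\bar\rho'\|_\infty,\kappa)$ small enough so that the term $(\bar\rho(x_3)-\bar\rho(z_k))\chi_k\d_t h$ is absorbed into the left-hand side, I obtain \eqref{est:max-reg_h}: the commutator $[\mathcal L,\chi_k]h$ contributes exactly the remainder $\|h\|_{L^{r_2}_T(L^{p_2})}+\|\nabla h\|_{L^{r_2}_T(L^{p_2})}$ on the right, while the localization of the Besov norm on the left is controlled by the almost-orthogonality of the $\chi_k h$.

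\textbf{Main obstacle.} The delicate point is the constant-coefficient estimate for the genuinely anisotropic Lam\'e operator $\mathcal L$. Although $\mathcal L$ is uniformly elliptic as a whole, the ratio $\veps/\mu$ is small and the natural scaling of its principal symbol is anisotropic, so constants in naive Fourier-multiplier estimates could in principle depend badly on $\mu$ and $\veps$; verifying that the Mikhlin (or $\mathcal R$-boundedness) argument does yield a single constant $C_0$ depending on $(\mu,\veps,\lambda)$ in a controlled way is the subtlest part. A secondary concern is the passage from the local, frozen-coefficient estimates to the global one: it must be checked that the partition-of-unity decomposition is compatible with the homogeneous Besov norm $\dot B^{s_2}_{p_2,r_2}$ used for the trace of the initial data, which is where careful use of the definitions from \cite{B-C-D,D-F-P} recalled in the preliminaries will be needed.
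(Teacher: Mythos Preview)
Your approach is correct in spirit but follows a genuinely different route from the paper. The paper does \emph{not} freeze coefficients. Instead, it first rewrites \eqref{eq:lame} in divergence form, $\partial_t h - \nabla_{\mu,\ep}\cdot(\bar\rho^{-1}\nabla_{\mu,\ep}h) - \lambda\nabla(\bar\rho^{-1}\nabla\cdot h) = F$, and then applies the Leray projector $\mathbb P$ (and $\mathbb Q=\Id-\mathbb P$) to decouple the Lam\'e system into two \emph{scalar-type} heat equations, one for $\mathbb P h$ and one for $\mathbb Q h$, each governed by a divergence-form parabolic operator with the variable coefficient $\bar\rho^{-1}(x_3)$. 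For such operators, Hieber--Pr\"uss maximal regularity and Nash--Aronson Gaussian bounds apply \emph{directly} with merely $L^\infty$ coefficients, so no localization or freezing is needed. The commutators $[\bar\rho^{-1},\mathbb P]$ turn out to be genuinely lower order (an explicit computation shows they involve only $\partial_3(-\Delta)^{-1}\Delta_{\mu,\ep}h$ and $\partial_3 h$), and it is \emph{these} commutators that produce the remainder $\|h\|_{L^{r_2}_T(L^{p_2})}+\|\nabla h\|_{L^{r_2}_T(L^{p_2})}$ on the right of \eqref{est:max-reg_h}. The $L^{r_1}_T(L^{p_1})$ bound on $\nabla h$ is obtained from a scaling argument that upgrades a bounded-domain mixed-norm parabolic embedding to the whole space.

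What each approach buys: the paper's Leray-projector trick sidesteps entirely the two technicalities you flag in your ``Main obstacle'' paragraph---there is no need to worry about vector-valued Mikhlin constants for the anisotropic Lam\'e symbol, and no partition-of-unity summation (with its attendant Besov-norm compatibility issues) is required. Your freezing strategy is more classical and would work, but the $\ell^{p_2}$-type summation over the localized pieces $h_k$, and especially the reconstruction of the homogeneous $\dot B^{s_2}_{p_2,r_2}$ norm on the left, are not entirely routine and would need to be written out carefully. Note also that your stated worry about the dependence of $C_0$ on $(\mu,\ep)$ is not actually an issue here: the proposition explicitly allows $C_0$ to depend on these parameters, and the paper makes no attempt to track that dependence.
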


To prove this result, we follow the idea of \cite{D-F-P}. We apply the Leray projector $\mathbb P$ to the system \eqref{eq:lame} and rely on the maximal regularity for a divergence-form parabolic equation.
Because of the stratification, additional commutators involving $\mathbb P$ and the reference density $\bar\rho$ have to be analyzed. 
We do not strive for making the dependence of $C_0$ explicit in $\ep,\, \mu,\, \lambda,\, \|\bar\rho\|_{W^{1,\infty}}$ and $\kappa$, 
because our aim is to obtain a well-posedness theorem for a fixed set of parameters, see Theorem \ref{th:wp}. 

\begin{remark} \label{r:low-order}
Notice that the two last terms in the right hand side of estimate \eqref{est:max-reg_h} cannot be get rid of. Their appearance is due to the commutator terms involving the Leray projector.
It is not possible to swallow them in the left hand side of \eqref{est:max-reg_h}. In spite of this, estimate \eqref{est:max-reg_h} can be used as such to prove the well-posedness result stated in Theorem \ref{th:wp}. Indeed, the terms $\|h\|_{L^{r_2}_T(L^{p_2})}$ and $\|\nabla h\|_{L^{r_2}_T(L^{p_2})}$ are of lower order and can be handled by interpolating with the finite energy. Such an analysis will also be done for the source term $f$, which contains in particular the nonlinear term $\rho u\cdot\nabla u$, see below in Section \ref{ss:wp-u}.
\end{remark}

\begin{proof}[Proof of Proposition \ref{p:DFP}]
We proceed in several steps.

\noindent\underline{Step 1: reduction to the heat equation.} We first rewrite system \eqref{eq:lame} in divergence form: we have
\begin{align}\label{e.lamebis}
&\partial_th-\nabla_{\mu,\ep}\cdot\left(\frac{1}{\bar\rho(x_3)}\nabla_{\mu,\ep} h\right)-\lambda\nabla\left(\frac{1}{\bar\rho(x_3)}\nabla\cdot h\right)\,=\,F\,,
\end{align}
where, for simplicity of notation, we have defined
\begin{align} \label{eq:def-F}
&F\,:=\ \frac1{\bar\rho(x_3)}f+\ep\frac{\bar\rho'(x_3)}{\bar\rho(x_3)^2}\partial_3h + \lambda\frac{\bar\rho'(x_3)}{\bar\rho(x_3)^2}\big(\nabla\cdot h\big)e_3.
\end{align}
As in \cite{D-F-P}, we apply the Leray projector $\mbb P$ on the equation. The difference with \cite{D-F-P} is that we now have commutator terms appearing.
Recall that
\begin{equation} \label{eq:P-Q}
\mbb P\,=\,\Id\,-\,\Q\,,\qquad\qquad \Q\,:=\,-\,\nabla(-\Delta)^{-1}\nabla\cdot\,,
\end{equation}
where the formulas have to be interpreted in the sense of Fourier multipliers. Applying $\mathbb P$ to \eqref{e.lamebis}, we get
\begin{align}\label{e.eqPu}
&\partial_t\mathbb P h-\nabla_{\mu,\ep}\cdot\left(\frac{1}{\bar\rho(x_3)}\nabla_{\mu,\ep} \mathbb P h\right)\,=\,\mbb PF\,+\,\mathbf{C}\,, 
\end{align} 
where the commutator term $\mathbf{C}$ is defined by
\begin{align}\label{e.commC}
\mathbf C:=\ &-\nabla_{\mu,\ep}\cdot\left(\left[\frac{1}{\bar\rho(x_3)}\,,\,\mbb{P}\right]\nabla_{\mu,\ep} h\right)\,.
\end{align}
Here above, the symbol $[A,B]:=AB-BA$ denotes the commutator between two operators $A$ and $B$. \\
We now compute the equation for $\Q h\,=\,h-\mathbb Ph$. Notice that, using \eqref{eq:P-Q}, we have
\[
\nabla\nabla\cdot h\,=\,\nabla \nabla\cdot \Q h\,=\,\Delta \Q h
\]
Therefore, rewriting \eqref{e.lamebis} in the form
\[
 \partial_th-\nabla_{\mu,\ep}\cdot\left(\frac{1}{\bar\rho(x_3)}\nabla_{\mu,\ep} h\right)-\lambda\frac{1}{\bar\rho(x_3)}\nabla\nabla\cdot h\,=\,
 \frac1{\bar\rho(x_3)}f+\ep\frac{\bar\rho'(x_3)}{\bar\rho(x_3)^2}\partial_3h
\]
and taking the difference of this equation with \eqref{e.eqPu}, we immediately find
\begin{align}\label{e.equ-Pu}
\begin{split}
&\partial_t\Q h-\nabla_{\mu+\lambda,\ep+\lambda}\cdot\left(\frac{1}{\bar\rho(x_3)}\nabla_{\mu+\lambda,\ep+\lambda}\Q h\right) \\
&=\Q\left(\frac1{\bar\rho(x_3)}f\right)-\mbb P\left(\ep\frac{\bar\rho'(x_3)}{\bar\rho(x_3)^2}\partial_3h + \lambda\frac{\bar\rho'(x_3)}{\bar\rho(x_3)^2}\big(\nabla\cdot h\big)e_3\right)-
\mathbf{C}+\lambda\frac{\bar\rho'(x_3)}{\bar\rho(x_3)^2}\d_3\Q h.
\end{split}
\end{align}
Both equations \eqref{e.eqPu} and \eqref{e.equ-Pu} are heat-type equations with a variable coefficient. The next step will show that all the quantities
appearing in their right-hand sides are of lower order in $h$.

\noindent\underline{Step 2: computation of the commutators.} 
Our goal is now to compute the commutator term $\mathbf C$, defined by \eqref{e.commC}. 
Owing to \eqref{eq:P-Q} and switching the position of the constant factors, we have
\[
\mathbf C:=-\nabla_{\mu,\ep}\cdot\left(\left[\frac{1}{\bar\rho(x_3)}\,,\,\mbb{P}\right]\nabla_{\mu,\ep} h\right) =
\nabla\cdot\left(\left[\frac{1}{\bar\rho(x_3)}\,,\,\Q\right]\nabla_{\mu^2,\ep^2} h\right)\,.
\]
Recall the definition of $\Q$ in \eqref{eq:P-Q}. We start by applying the divergence operator to the various quantities: after observing that
$\nabla\cdot\nabla_{\mu^2,\veps^2}=\Delta_{\mu,\veps}$, we find
\begin{align*}
\left[\frac{1}{\bar\rho}\,,\,\Q\right]\nabla_{\mu^2,\ep^2} h &= -\frac{1}{\bar\rho}\nabla(-\Delta)^{-1}\Delta_{\mu,\veps}h +
\nabla(-\Delta)^{-1}\nabla\cdot\left(\frac{1}{\bar\rho}\,\nabla_{\mu^2,\veps^2}h\right)\,.
\end{align*}
Therefore, we find 
\begin{align*}
\mathbf C\,&=\,\nabla\cdot\left(\left[\frac{1}{\bar\rho(x_3)}\,,\,\Q\right]\nabla_{\mu^2,\ep^2} h\right)\\
&=\,-\nabla\cdot\left(\frac{1}{\bar\rho(x_3)}\nabla(-\Delta)^{-1}\Delta_{\mu,\veps}h\right) +
\nabla\cdot\left(\nabla(-\Delta)^{-1}\nabla\cdot\left(\frac{1}{\bar\rho(x_3)}\,\nabla_{\mu^2,\veps^2}h\right)\right) \\
&=\,
\frac{\bar\rho'(x_3)}{\bar\rho(x_3)^2}\,\d_3(-\Delta)^{-1}\Delta_{\mu,\veps}h + \frac{1}{\bar\rho(x_3)}\Delta_{\mu,\veps}h - \nabla\cdot\left(\frac{1}{\bar\rho(x_3)}\,\nabla_{\mu^2,\veps^2}h\right)\,,
\end{align*}
which in the end leads us to
\begin{equation} \label{eq:comm}
\mathbf C\,=\,\frac{\bar\rho'(x_3)}{\bar\rho(x_3)^2}\,\Big(\d_3(-\Delta)^{-1}\Delta_{\mu,\veps}h + \veps\,\d_3h\Big)\,.
\end{equation}
Since the operator $(-\Delta)^{-1}\Delta_{\mu,\veps}$ is a singular integral operator, whose symbol is homogeneous of degree $0$, the previous computations show that
$\mathbf{C}$ is indeed a lower order term, as claimed.

\noindent\underline{Step 3: estimates via maximal regularity.}
\noindent We apply maximal regularity estimates of \cite{Hieb-Pruss} for the divergence-form parabolic operator
\begin{equation} \label{eq:parab}
\partial_t(\ \cdot\ )-\nabla_{\mu,\ep}\cdot\big(\bar\rho^{-1}\nabla_{\mu,\ep}(\ \cdot\  )\big)
\end{equation}
to equation \eqref{e.eqPu}. We obtain
\[
\left\|\left(\d_t\mathbb Ph,\nabla^2\mathbb Ph\right)\right\|_{L^{r_2}_T(L^{p_2})}
\leq\ C
\left(\left\|\mathbb P h_{in}\right\|_{\dot B^{s_2}_{p_2,r_2}}\,+\,\|\mbb PF+\mathbf{C}\|_{L^{r_2}_T(L^{p_2})}\right)\,.
\]
Using the continuity of $\mbb P$ and $(-\Delta)^{-1}\Delta_{\mu,\veps}$ on $L^{p_2}$ (since $1<p_2<+\infty$) and keeping in mind definition \eqref{eq:def-F} of $F$ and \eqref{eq:comm},
it is easy to bound
\[
\|\mbb PF+\mathbf{C}\|_{L^{r_2}_T(L^{p_2})}\ \leq\ C\,\left(\|f\|_{L^{r_2}_T(L^{p_2})}+\|\nabla h\|_{L^{r_2}_T(L^{p_2})}\right)\,.
\]
So, in the end we find
\begin{align}\label{e.estPh}
\begin{split}
\left\|\left(\d_t\mathbb Ph,\nabla^2\mathbb Ph\right)\right\|_{L^{r_2}_T(L^{p_2})}
\leq\ &C_0
\left(\left\|\mathbb P h_{in}\right\|_{\dot B^{s_2}_{p_2,r_2}}+\|f\|_{L^{r_2}_T(L^{p_2})}+\|\nabla h\|_{L^{r_2}_T(L^{p_2})}
\right).
\end{split}
\end{align}
Next, thanks to the gaussian bounds on the fundamental solution of the parabolic equation \eqref{eq:parab}, see \cite{Nash} and \cite{Aron}, arguing as in Lemma 2.4 of \cite{D-F-P}
we infer that
\begin{equation} \label{est:Ph_0}
\left\|\mathbb Ph\right\|_{L^{r_0}_T(L^{p_0})}\,\leq\,C_0
\left(\left\|\mathbb P h_{in}\right\|_{\dot B^{s_2}_{p_2,r_2}}+\|f\|_{L^{r_2}_T(L^{p_2})}+\|\nabla h\|_{L^{r_2}_T(L^{p_2})}\right).
\end{equation}
Thus, in order to complete the proof, it remains us to bound the $L^{r_1}_T(L^{p_1})$ norm of $\nabla\mbb Ph$. This is the goal of the next step.

\noindent\underline{Step 4: a functional inequality.}
\noindent We claim that, for any $w\in W^{2,1}_{p_2,r_2}\big(\R^3\times(0,T)\big)$, one has the estimate
\begin{equation}\label{e.parsobemb}
\left\|\nabla w\right\|_{L^{r_1}_T(L^{p_1})}\ \leq\ C\,\bigg(\|\nabla^2w\|_{L^{r_2}_TL^{p_2}(\R^3\times(0,T))}+\|\partial_tw\|_{L^{r_2}_TL^{p_2}(\R^3\times(0,T))}\bigg)\,. 
\end{equation}
In fact, estimate \eqref{e.parsobemb} is a functional inequality, which does not use the parabolic equation. We are going to deduce it
from the corresponding inequality in a bounded domain, see in particular \cite[Proposition 2.1]{Sol04}. \\
In order to prove \eqref{e.parsobemb}, we take $\varphi\in C^\infty_c(B(0,1))$ such that $\varphi=1$ on $B(0,\frac12)$. For $k\in\mathbb N$, we consider $w_k:=\varphi(\cdot/k)w$. Notice that $w_k$ is supported in $B(0,k)$. Then, we consider the rescaled function $w_k^\lambda$ for $\lambda\geq k$, according to the parabolic scaling:
\begin{equation*}
w_k^\lambda=w_k(\lambda\cdot,\lambda^2\cdot)=\varphi(\tfrac\lambda k\cdot)w(\lambda\cdot,\lambda^2\cdot).
\end{equation*}
Notice that $w_k^\lambda$ is supported in $B(0,1)$. 
By the mixed norm parabolic Sobolev embedding in the bounded domain $B(0,1)$, we have 
\begin{equation*}
\|\nabla w_k^\lambda\|_{L^{r_1}_TL^{p_1}(B(0,1)\times(0,T/\lambda^2))}\leq C\|w_k^\lambda\|_{W^{2,1}_{p_2,r_2}(B(0,1)\times(0,T/\lambda^2))},
\end{equation*}
where $C$ is a constant independent of $T$. Then, by rescaling, we obtain
\begin{multline*}
\frac\lambda{\lambda^{\frac3{p_1}+\frac2{r_1}}}\|\nabla w_k\|_{L^{r_1}_TL^{p_1}(B(0,k)\times(0,T))}\\
\leq C\bigg(\frac1{\lambda^{\frac3{p_2}+\frac2{r_2}}}\|w_k\|_{L^{r_2}_TL^{p_2}(B(0,k)\times(0,T))}+\frac\lambda{\lambda^{\frac3{p_2}+\frac2{r_2}}}\|\nabla w_k\|_{L^{r_2}_TL^{p_2}(B(0,k)\times(0,T))}\\
+\frac{\lambda^2}{\lambda^{\frac3{p_2}+\frac2{r_2}}}\|\nabla^2w_k\|_{L^{r_2}_TL^{p_2}(B(0,k)\times(0,T))}+
\frac{\lambda^2}{\lambda^{\frac3{p_2}+\frac2{r_2}}}\|\partial_tw_k\|_{L^{r_2}_TL^{p_2}(B(0,k)\times(0,T))}\bigg),
\end{multline*}
and letting $\lambda\rightarrow\infty$ we deduce
\begin{align}
&\|\nabla w_k\|_{L^{r_1}_TL^{p_1}(\R^3\times(0,T))}\label{e.estwk} \\ 
&\qquad\qquad\leq C\bigg(\|\nabla^2w_k\|_{L^{r_2}_TL^{p_2}(\R^3\times(0,T))}+\|\partial_tw_k\|_{L^{r_2}_TL^{p_2}(\R^3\times(0,T))}\bigg). \nonumber
\end{align}
From estimate \eqref{e.estwk} and the fact that $w\in W^{2,1}_{p_2,r_2}$, we get that $\nabla w_k$ is a Cauchy sequence in $L^{r_1}_T(L^{p_1})$.
Hence we can pass to the limit in \eqref{e.estwk}: thus, we obtain \eqref{e.parsobemb}, as claimed.

\noindent\underline{Step 5: end of the proof.}
\noindent Owing to the fact that $1<p_2<+\infty$ and to \eqref{e.estPh}, we have that $\mbb P h$ belongs to $W^{2,1}_{p_2,r_2}(\R^3\times(0,T))$. Therefore, we can apply
inequality \eqref{e.parsobemb} to $w=\mbb P h$. We find
\begin{equation} \label{est:Ph_1}
\left\|\nabla \mathbb Ph\right\|_{L^{r_1}_T(L^{p_1})}\,\leq\,C_0
\left(\left\|\mathbb P h_{in}\right\|_{\dot B^{s_2}_{p_2,r_2}}+\|f\|_{L^{r_2}_T(L^{p_2})}+\|\nabla h\|_{L^{r_2}_T(L^{p_2})}\right),
\end{equation}
for a possibly new constant $C_0>0$. In addition, since $\Q h$ solves equation \eqref{e.equ-Pu}, which is analogous to \eqref{e.eqPu}, estimates similar to \eqref{e.estPh},
\eqref{est:Ph_0} and \eqref{est:Ph_1} hold true for $\Q h$ and its derivatives. Then, writing $h=\mbb Ph + \Q h$ completes the proof of the proposition.
\end{proof}

\subsection{Basic energy estimates} \label{ss:wp-energy}

Let us take a smooth solution $(\rho,u)$ to system \eqref{eq:wp_nse}, such that $\rho-\bar\rho$ and $u$ decay sufficiently fast at space infinity. 
We want to find \tsl{a priori} estimates
in suitable norms. For this, we are going to work with the variables
$$
r(t)\,:=\,\rho(t)-\bar\rho\qquad\mbox{and}\qquad u.
$$
In the same way, we set $r_{in}\,=\,\rho_{in}-\bar\rho$. Recall that
\begin{equation} \label{est:r_in-size}
\left\|r_{in}\right\|_{L^\infty}\,\leq\,\eta\,. 
\end{equation}
\indent We start by performing classical energy estimates, which provides us with a bound for the low frequencies of the velocity field.
Namely, by multiplying the momentum equation in \eqref{eq:wp_nse} by $u$ and integrating by parts, we get, in a standard way, the control 
\begin{equation} \label{est:en_est}
\left\|\sqrt{\rho}\,u\right\|_{L^\infty(L^2)}\,+\,\left\|\nabla u\right\|_{L^2(L^2)}\,\leq\,C_{energy}\,.
\end{equation}
See Section \ref{ss:unif-bounds} for similar bounds. Of course, the constant $C_{energy}$ depends also on the (fixed) values of the coefficients $(\mu,\veps,\lambda)$.
From this control and
$$
\int_\Omega\bar\rho|u|^2\,=\,\int_{\Omega}\rho|u|^2\,-\,\int_\Omega r|u|^2\,,
$$
we infer that, for any $T>0$,
\begin{equation} \label{est:u_low-f}
\left\|u\right\|_{L^\infty_T(L^2)}\,\leq\,\kappa^{-1}\left(C_{energy}\,+\,\|r\|_{L^\infty_T(L^\infty)}\,\|u\|_{L^\infty_T(L^2)}\right)\,,
\end{equation}
where we recall that $\kappa$ is a lower bound for $\bar\rho$. 
Hence, if $T>0$ is such that
\begin{equation} \label{est:r_small}
\|r\|_{L^\infty_T(L^\infty)}\,\leq\,4\,\eta<\frac\kappa2\,,
\end{equation}
where $\eta>0$ is the size of the initial datum in the $L^\infty$ norm, recall \eqref{est:r_in-size}, we deduce
\begin{equation}\label{est:u_L^2}
\left\|u\right\|_{L^\infty_T(L^2)}\,\leq\,2C_{energy}\kappa^{-1}\,.
\end{equation}

The next goal is to exhibit a control on the density variation function $r$. We will work in higher Sobolev norms, namely in $H^2$. However, we are going to bound its $L^\infty$ norm
independently (i.e. without using Sobolev embeddings), in order to get, in view of \eqref{est:r_small}, a smallness condition only on $\|r_{in}\|_{L^\infty}$, and not on the higher order norm
of $r_{in}$.

\subsection{Estimates for the density function} \label{ss:wp-density}

In this subsection, we find transport estimates for the density variation function $r$. First of all, from the mass equation in \eqref{eq:wp_nse} we find that $r$ fulfills
\begin{equation} \label{eq:r}
\d_tr\,+\,u\cdot\nabla r\,+\,r\,\nabla\cdot u\,=\,-u_3\bar\rho'-\bar\rho\,\nabla\cdot u\,.
\end{equation}
A basic $L^p$ estimate for this equation gives, for any $t>0$,
\begin{align*}
\|r(t)\|_{L^p}\,\leq\ &\left\|r_{in}\right\|_{L^p}\,+\,\left(1-\frac{1}{p}\right)\int^t_0\|r(\tau)\|_{L^p}\,\|\nabla\cdot u(\tau)\|_{L^\infty}\,d\tau\\
&\qquad\qquad\qquad
+\,\int_0^t\|\bar\rho'\|_{L^\infty}\big(\|u_3(\tau)\|_{L^p}
+\|\nabla\cdot u(\tau)\|_{L^p}\big)\,d\tau\,.
\end{align*}
On the one hand, in the case $p=2$, we get
\begin{align} \label{est:r_L^2}
\begin{split}
\|r\|_{L^\infty_T(L^2)}\,\leq\ &\left\|r_{in}\right\|_{L^2}\,+\,\frac{1}{2}\int^T_0\|r\|_{L^2}\,\|\nabla\cdot u\|_{L^\infty}\,d\tau\,\\
&+\,\int_0^T\|\bar\rho'\|_{L^\infty}\big(\|u_3(\tau)\|_{L^2}+\|\nabla\cdot u(\tau)\|_{L^2}\big)\,d\tau
\end{split}
\end{align}
for any time $T>0$. In turn, Gr\"onwall's lemma gives the bound
\begin{multline} \label{est:r_L^2_final}
\|r\|_{L^\infty_T(L^2)}\,\leq\,e^{\int^T_0\|\nabla\cdot u(\tau)\|_{L^\infty}\,d\tau} \\
\times\Big(\left\|r_{in}\right\|_{L^2}\,+\int_0^T\|\bar\rho'\|_{L^\infty}\big(\|u_3(\tau)\|_{L^2}
+\|\nabla\cdot u(\tau)\|_{L^2}\big)\,d\tau\Big).
\end{multline}
On the other hand, by letting $p\ra+\infty$, from Gr\"onwall's lemma again we deduce
\begin{multline} \label{est:r_L^inf}
\|r\|_{L^\infty_T(L^\infty)}\,\leq\,e^{\int^T_0\|\nabla\cdot u(\tau)\|_{L^\infty}\,d\tau}\\
\times \Big(\left\|r_{in}\right\|_{L^\infty}\,+\int_0^T\|\bar\rho'\|_{L^\infty}\big(\|u_3(\tau)\|_{L^\infty}+\|\nabla\cdot u(\tau)\|_{L^\infty}\big)\,d\tau\Big).
\end{multline}
We now define the time $T>0$ as
\begin{multline} \label{def:T}
T\,:=\,\sup\bigg\{t>0\;\Big|\, {\int_0^t\|\bar\rho'\|_{L^\infty}\|u_3(\tau)\|_{L^\infty}\,d\tau\,}\\
{+\,\int_0^t(1+\|\bar\rho\|_{L^\infty})\|\nabla\cdot u(\tau)\|_{L^\infty}\,d\tau}\,\leq\,\min\big\{\eta,\log 2\big\}\bigg\}\,,
\end{multline}
where $\eta>0$ is as in \eqref{est:r_in-size}. From the previous bound we gather then
\begin{equation} \label{est:r_L^inf_final}
\|r\|_{L^\infty_T(L^\infty)}\,\leq\,2\left\|r_{in}\right\|_{L^\infty}\,+2\,\eta\,\leq\,4\,\eta\,.
\end{equation}

We now differentiate equation \eqref{eq:r} with respect to $x_j$, for any $j\in\{1,2,3\}$. We get that $\d_jr$ verifies the following equation:
\begin{multline}\label{eq:Dr}
\d_t\d_jr\,+\,u\cdot\nabla\d_jr\,+\,\d_jr\,\nabla\cdot u\,=\,\,-\,\d_ju\cdot\nabla r\,-\,r\,\nabla\cdot\d_ju\\
-\partial_ju_3\bar\rho'-u_3\partial_j\bar\rho'-\partial_j\bar\rho\nabla\cdot u-\bar\rho\nabla\cdot\partial_ju\,.
\end{multline}
The same $L^2$ energy estimate for the continuity equation as above yields
\begin{align*}
\left\|\d_jr(t)\right\|_{L^2}\,\leq\ &\left\|\d_jr_{in}\right\|_{L^2}+\frac{1}{2}\int^t_0\left\|\d_jr\right\|_{L^2}\left\|\nabla\cdot u\right\|_{L^\infty}\,d\tau\\
&+\int^t_0\Big(\|r\|_{L^\infty}\left\|\nabla\cdot\d_ju\right\|_{L^2}+\left\|\nabla r\right\|_{L^2}\|\d_ju\|_{L^\infty}+\|\partial_j\bar\rho'\|_{L^\infty}\|u_3\|_{L^2}\\
&+\|\bar\rho'\|_{L^\infty}\|\partial_ju_3\|_{L^2}+\|\partial_j\bar\rho\|_{L^\infty}\|\nabla\cdot u\|_{L^2}+\|\bar\rho\|_{L^\infty}\|\nabla\cdot\partial_ju\|_{L^2}
\Big)d\tau\,, \nonumber
\end{align*}
for any $t\geq 0$. 
Hence, for all $t\geq 0$, one has 
\begin{align}
&\left\|\nabla r(t)\right\|_{L^2}\,\leq\,\left\|\nabla r_{in}\right\|_{L^2}  \label{est:d_jr}\\
&\qquad+C\int^t_0\left(\|r\|_{L^\infty}\left\|\nabla^2u\right\|_{L^2}+\left\|\nabla r\right\|_{L^2}\left\|\nabla u\right\|_{L^\infty}+
\|\bar\rho\|_{W^{2,\infty}}\left\|u\right\|_{H^{2}}\right)\,d\tau\,. \nonumber
\end{align}
We do not apply Gr\"onwall's lemma directly on this inequality. It is better to bound first the second-order derivatives of $r$.
For this, let us differentiate equation \eqref{eq:Dr} with respect to $x_k$, for any $k\in\{1,2,3\}$. We deduce the following
equation for the quantity $r_{jk}\,:=\,\partial^2_{kj}r$:
\begin{align*}
\d_tr_{jk}+u\cdot\nabla r_{jk}+r_{jk}\nabla\cdot u=&-\Big(\partial^2_{jk}u\cdot\nabla r+\d_ju\cdot\nabla\d_kr+\d_kr\cdot\nabla\d_ju-r\nabla\cdot\partial^2_{kj}u\Big)\\
&-\partial_k\Big(\partial_ju_3\bar\rho'+u_3\partial_j\bar\rho'+\partial_j\bar\rho\nabla\cdot u+\bar\rho\nabla\cdot\partial_ju\Big)\,.
\end{align*}
Performing another energy estimate, we infer, for any $t\geq0$, the inequality
\begin{align*}
\left\|r_{jk}(t)\right\|_{L^2}\,\leq\ &\left\|r_{in,jk}\right\|_{L^2}\,+\,\frac{1}{2}\int^t_0\left\|r_{jk}\right\|_{L^2}\,\left\|\nabla\cdot u\right\|_{L^\infty}\,d\tau \\
&\qquad\qquad\quad+\int^t_0\Big(\|\nabla r\|_{L^4}\,\|\nabla^2u\|_{L^4}+\|\nabla u\|_{L^\infty}\,\|\nabla^2r\|_{L^2} \\
&\qquad\qquad\qquad\qquad\qquad+\|r\|_{L^\infty}\left\|\nabla^3u\right\|_{L^2}+\|\bar\rho\|_{W^{3,\infty}}\|u\|_{H^{3}}\Big)\,d\tau\,,
\end{align*}
where we have defined $r_{{in},jk}\,:=\,\partial^2_{jk}r_{in}$.
After using the interpolation inequality
\begin{equation} \label{est:interp}
\|\phi\|_{L^4}\,\leq\,\|\phi\|_{L^2}^{1/4}\;\|\phi\|_{L^6}^{3/4}
\end{equation}
together with the Sobolev embedding $\dot H^1(\R^3)\hookrightarrow L^6(\R^3)$, we obtain
\begin{align*}
\|\nabla r\|_{L^4}\,\|\nabla^2u\|_{L^4}\,&\leq\,\|\nabla r\|_{L^2}^{1/4}\;\left\|\nabla^2r\right\|_{L^2}^{3/4}\;\left\|\nabla^2u\right\|_{L^2}^{1/4}\;\left\|\nabla^3u\right\|_{L^2}^{3/4} \\
&\leq\,\left(\|\nabla r\|_{L^2}\,+\,\left\|\nabla^2 r\right\|_{L^2}\right)\,\left(\left\|\nabla^2u\right\|_{L^2}\,+\,\left\|\nabla^3u\right\|_{L^2}\right)\,.
\end{align*}
In view of this inequality, the previous bound on $r_{jk}$ yields, on the time interval $[0,T]$,
\begin{align}\label{est:DDr} 
\begin{split}
&\left\|\nabla^2r\right\|_{L^\infty_T(L^2)}\,\leq\ \left\|\nabla^2r_{in}\right\|_{L^2}\\
&\qquad\quad+C\int^t_0\Big(\|r\|_{L^\infty}\left\|\nabla^3u\right\|_{L^2}+\|\bar\rho\|_{W^{3,\infty}}\|u\|_{H^3}\\
&\qquad\qquad\quad+\left(\|\nabla r\|_{L^2}+\left\|\nabla^2 r\right\|_{L^2}\right)\left(\left\|\nabla u\right\|_{L^\infty}+\left\|\nabla^2u\right\|_{L^2}+\left\|\nabla^3u\right\|_{L^2}\right)\Big)
\,d\tau\,. 
\end{split}
\end{align}

Let us now introduce the notation 
$$
\mc R(t)\,:=\,\left\|r\right\|_{L^\infty_t(H^2)}\,+\,\left\|r\right\|_{L^\infty_t(L^\infty)}\,.
$$
Summing up \eqref{est:r_L^2_final}, \eqref{est:r_L^inf_final}, \eqref{est:d_jr} and \eqref{est:DDr} we deduce the following bound:
\begin{equation} \label{est:r_H^2}
\mc R(T)\,\leq\,C\left(\left\|r_{in}\right\|_{H^2}+\int^T_0\big(\|\bar\rho\|_{W^{3,\infty}}+\mc R(\tau)\big)\left(\left\|\nabla u\right\|_{L^\infty}+\left\|u\right\|_{H^{3}}\right)d\tau\right)\,.
\end{equation}

To end this part, we observe that, for almost all $t\in[0,T]$ and $x\in\R^3$, $|r(x,t)|\leq 4\eta$. So, by assumptions \eqref{hyp:p} on the pressure, there exists a constant $C>0$, {depending
on the function $P$ and $\|\bar\rho\|_{L^\infty}$,} such that, for any parameter $s\in[0,1]$, on $[0,T]\times\R^3$ one has
\begin{equation} \label{est:pressure}
\left|P(\bar\rho+rs)\right|\,+\,\left|P'(\bar\rho+rs)\right|\,+\,\left|P''(\bar\rho+rs)\right|\,+\,\left|P'''(\bar\rho+rs)\right|\,\leq\,C\,.
\end{equation}

\subsection{Maximal regularity estimates for the velocity field} \label{ss:wp-u}

It remains to find a bound on $u$. For this, we mimic an approach used in \cite{D-F-P}, based on the maximal regularity estimates of Subsection \ref{ss:max}.
To begin with, we recast the equation for $u$ as
\begin{equation} \label{eq:mr}
\bar\rho\d_tu\,+\,\mc Lu\,=\,-r\,\d_tu\,-\,f\,,
\end{equation}
where the anisotropic Lam\'e operator $\mc L$ is defined by \eqref{e.lame}, 
and where we have set
\begin{equation} \label{def:f}
 f\,:=\,(\bar\rho+r)\,u\cdot\nabla u\,+\,\nabla P(\rho)
-\rho\nabla G.
\end{equation}
In the computations below, when convenient, we will resort to the notation $\bar\rho+r\,=\,\rho$, and use the bounds \eqref{est:en_est} and \eqref{est:r_L^inf_final} for $\sqrt{\rho}u$ and $\sqrt{\rho}$
respectively.

In view of \eqref{est:r_H^2} above, we are interested in $H^3$ bounds for $u$: for this, we will apply Proposition \ref{p:DFP} to both $u$ and $\nabla u$.
To this end, we fix the following values of the parameters:
\begin{align} \label{eq:p-r}
\big(p_2,r_2\big)\,=\,(2,4/3)\,,\qquad (p_0,r_0)\,=\,(+\infty,2)\,,\qquad (p_1,r_1)\,=\,(2,4)\,.
\end{align}
With these choices, all the hypotheses in Proposition \ref{p:DFP} are satisfied. Indeed, thanks to the energy estimates \eqref{est:en_est} and \eqref{est:u_L^2} and maximal regularity
\eqref{e.estPh}, one can easily verify that, \textsl{a priori}, both $u$ and $\nabla u$ belong to the space $W^{2,1}_{2,4/3}$. 
We choose $p_0=+\infty$ in order to have, thanks to \eqref{est:u-L^2} above, a control on $u$ in any $L^p$,
$p\in[2,+\infty]$. 
Notice also that we have some freedom for the values of $r_2$ and $p_1$, which then determine $r_0$ and $r_1$. Here, we take the simple choice $r_2=4/3$ and $p_1=2$. This implies $s_2=1/2$.

For $t\geq0$, let us introduce the quantity
\begin{align*}
\mc U(t)\,&:=\,\|u\|_{L^\infty_t(L^2)\cap L^2_t(L^\infty)}\,+\,\left\|\nabla u\right\|_{L^4_t(L^2)\cap L^2_t(L^\infty)} \\
&\qquad\qquad\qquad\,+\,\left\|\nabla^2u\right\|_{L^4_t(L^2)}\,+\,\left\|\nabla^3u\right\|_{L^{4/3}_t(L^2)}\,+\,\left\|\d_tu\right\|_{L^{4/3}_t(H^1)}\,.
\end{align*}
On $[0,T]$, where $T>0$ is the time
defined in \eqref{def:T}, we have
\begin{multline} \label{est:U_1}
\mc U(T)\,\leq\,C_0\Big(\left\|u_{in}\right\|_{\dot B^{1/2}_{2,4/3}}\,+\,\left\|\nabla u_{in}\right\|_{\dot B^{1/2}_{2,4/3}}\\
+\,\|r\d_tu+f\|_{L^{4/3}_T(H^1)}+\|u\|_{L^{4/3}_T(L^{2})}+\|\nabla u\|_{L^{4/3}_T(L^{2})}+\|\nabla^2 u\|_{L^{4/3}_T(L^{2})}\Big)\,,
\end{multline}
where $f$ is defined in \eqref{def:f}. Notice that
\begin{align*}
\left\|u_{in}\right\|_{\dot B^{1/2}_{2,4/3}}\,+\,\left\|\nabla u_{in}\right\|_{\dot B^{1/2}_{2,4/3}}\,\leq\,\left\|u_{in}\right\|_{\dot B^{1/2}_{2,4/3}\cap\dot B^{3/2}_{2,4/3}}\,\leq\,
\left\|u_{in}\right\|_{B^{3/2}_{2,4/3}}\,,
\end{align*}
where the last inequality holds in view of the fact that $B^s_{p,r}\,=\,\dot B^{s}_{p,r}\cap L^p$ for any $s>0$, see also Chapter 2 of \cite{B-C-D}. \\
\indent
Next, we bound the term containing the time derivative. Notice that, for any $j\in\{1,2,3\}$, we have
$$
\d_j(r\,\d_tu)\,=\,r\,\d_t\d_ju\,+\,\d_jr\,\d_tu\,.
$$
Therefore, 
we have
$$
\left\|r\,\d_tu\right\|_{L^{4/3}_T(L^2)}\,+\,\left\|r\,\d_t\nabla u\right\|_{L^{4/3}_T(L^2)}\,\leq\,4\,\eta\,\left\|\d_tu\right\|_{L^{4/3}_T(H^1)}\,,
$$
so this term can be absorbed in the left-hand side of \eqref{est:U_1}, if $\eta>0$ is fixed so that condition \eqref{eq:d-small} is fulfilled.
As for the remaining term $\d_jr\,\d_tu$, 
we notice first of all that from \eqref{est:r_L^inf_final}, the lower bound for $\bar\rho$, the momentum equation in \eqref{eq:wp_nse} and \eqref{est:en_est}, we can bound
\begin{align*}
\left\|\d_tu\right\|_{L^{2}_T(L^2)}\,&\leq\,C\left\|\rho\d_tu\right\|_{L^{2}_T(L^2)}\,\leq\,C\,\left\|\mc Lu+\rho u\cdot\nabla u+ \nabla P-\rho\nabla G 
\right\|_{L^{2}_T(L^2)}\,.
\end{align*}
At this point, we obviously have
$$
\left\|\mc Lu\right\|_{L^{2}_T(L^2)}\,\leq\,T^{(1/2)-(1/4)}\,\left\|\mc Lu\right\|_{L^4_T(L^2)}\,\leq\,C\,T^{1/4}\,\mc U(T).
$$
In addition, one has
\begin{align}\label{e.estunablau}
\left\|\rho u\cdot\nabla u\right\|_{L^{2}_T(L^2)}\,&\leq\,C\,T^{(1/2)-(1/2)}\,\left\|\sqrt{\rho}\,u\right\|_{L^\infty_T(L^2)}\,\left\|\nabla u\right\|_{L^2_T(L^\infty)}\,\leq\,
C\,\mc U(T)\,.
\end{align}
Let us now turn to the pressure and potential force terms. In view of \eqref{e.eqpotG}, we have
\begin{equation} \label{eq:P-G}
\nabla P(\rho)-\rho\nabla G \ =\ \nabla\big(P(\rho) - P(\bar\rho)\big) - (\rho-\bar\rho)\nabla G \ =\ \nabla\big(P'(\bar\rho+sr) r\big) - r\nabla G\,,
\end{equation} 
for some $s\in\,]0,1[\,$. On the one hand, since $G=H'(\bar\rho)$, $G$ is bounded; so, for a constant $C>0$ depending on $\|\bar\rho\|_{W^{1,\infty}}$ and on the function $P$, we have
\begin{equation*}
\left\|r\nabla G\right\|_{L^2_T(L^2)} \leq C\ T^{1/2}\ \|r\|_{L^\infty_T(L^2)}\,.
\end{equation*}
On the other hand, by writing
\begin{equation} \label{eq:DP}
 \nabla\big(P'(\bar\rho+sr) r\big) = P'(\bar\rho+sr)\nabla r + P''(\bar\rho+sr) r \nabla\bar\rho + s P''(\bar\rho+sr) r \nabla r
\end{equation}
and making use of \eqref{est:pressure} and \eqref{est:r_L^inf_final}, direct computations show that
\begin{align*}
& \left\|\nabla\big(P'(\bar\rho+sr) r\big)\right\|_{L^2_T(L^2)} \leq C\, T^{1/2} \|r\|_{L^\infty_T(H^1)}\,.
\end{align*}
Here, the constant $C>0$ depends on the constant appearing in \eqref{est:pressure}, on $\eta$ and $\|\bar\rho\|_{W^{1,\infty}}$.
Assuming without loss of generality that $T\leq 1$, we have thus proved that
\begin{equation} \label{est:d_tu}
\left\|\d_tu\right\|_{L^{2}_T(L^2)}\,\leq\,C\,\big(
\mc R(T)\,+\,\mc U(T)\big)\,.
\end{equation}
With this control at hand, let us return to the bound of $\d_jr\,\d_tu$ in $L^{4/3}_T(L^2)$, for $j\in\{1,2,3\}$. By resorting again to the interpolation inequality
\eqref{est:interp}, in view of the Sobolev embedding $\dot H^1\hookrightarrow L^6$ and of Young's inequality we get
\begin{align*}
\left\|\d_jr\,\d_tu\right\|_{L^{4/3}_T(L^2)}\,&\leq\,\|\nabla r\|_{L^\infty_T(L^2)}^{1/4}\;\|\nabla^2 r\|_{L^\infty_T(L^2)}^{3/4}\;\|\d_tu\|_{L^{4/3}_T(L^2)}^{1/4}\;\|\d_t\nabla u\|_{L^{4/3}_T(L^2)}^{3/4} \\
&\leq\,C\,\mc R(T)\,\left(T^{(3/4)-(1/2)}\,\|\d_tu\|_{L^{2}_T(L^2)}\right)^{\!1/4}\;\|\d_t\nabla u\|_{L^{4/3}_T(L^2)}^{3/4}\\
&\leq\,C\,T^{1/16}\,\mc R(T)\,\big(
\mc R(T)\,+\,\mc U(T)\big)^{1/4}\,\big(\mc U(T)\big)^{3/4} \\
&\leq\,C\,T^{1/16}\,\big(
\mc R(T)\,+\,\mc U(T)\big)^2\,.
\end{align*}
Inserting all the previous bounds in \eqref{est:U_1}, under condition \eqref{eq:d-small} we deduce that
\begin{align} \label{est:U_2}
\begin{split}
\mc U(T)\leq &C\Big(\left\|u_{in}\right\|_{B^{3/2}_{2,4/3}}+T^{1/16}\big(
\mc R(T)+\mc U(T)\big)^2+\|f\|_{L^{4/3}_T(H^1)}+\|u\|_{L^{4/3}_T(H^{2})}\Big)\\
\leq&C\Big(\left\|u_{in}\right\|_{B^{3/2}_{2,4/3}}\,+\,T^{1/16}\,\big(
1+\mc R(T)+\mc U(T)\big)^2+\|f\|_{L^{4/3}_T(H^1)}\Big).
\end{split}
\end{align}
Our next goal is to bound $f$ in the $L^{4/3}_T(H^1)$ norm. First of all, let us focus on the $L^{4/3}_T(L^2)$ norm. Repeating the computations which led to \eqref{est:d_tu}, we easily find that
$$
\|f\|_{L^{4/3}_T(L^2)}\,\leq\,C\left(T^{3/4}\,\mc R(T)\,+\,T^{1/4}\,\mc U(T)\right)\,\leq\,C\,T^{1/4}\,\big(\mc R(T)\,+\,\mc U(T)\big)\,,
$$
where again, without loss of generality, we have assumed that $T\leq 1$.
Next, for $j\in\{1,2,3\}$, from the definition \eqref{def:f} of $f$, we get
\begin{align}\label{e.eqdjf}
\begin{split}
\d_jf\,&=\,\d_j\rho\,u\cdot\nabla u\,+\,\rho\,\d_ju\cdot\nabla u\,+\,\rho\,u\cdot\nabla\d_ju+\partial_j\big(\nabla P(\rho)-\rho\nabla G\big).
\end{split}
\end{align}
We first bound the convective terms in \eqref{e.eqdjf}. By \eqref{est:r_L^inf_final}, interpolation between Lebesgue spaces and \eqref{est:interp}, we have
\begin{align*}
\left\|\rho\,u\cdot\nabla\d_ju\right\|_{L^{4/3}_T(L^2)}\,&\leq\,C\,T^{(3/4)-(1/2)}\,\|u\|_{L^4_T(L^4)}\,\|\nabla^2 u\|_{L^{4}_T(L^4)} \\
&\leq\,C\,T^{1/4}\,\|u\|_{L^\infty_T(L^2)}^{1/2}\,\|u\|^{1/2}_{L^2_T(L^\infty)}\,\|\nabla^2 u\|^{1/4}_{L^{4}_T(L^2)}\,\left\|\nabla^3 u\right\|^{3/4}_{L^{4}_T(L^2)} \\
&\leq\,C\,T^{1/4}\,\big(\mc U(T)\big)^2\,.
\end{align*}
Arguing in a similar way, we obtain
\begin{align*}
\left\|\rho\,\d_ju\cdot\nabla u\right\|_{L^{4/3}_T(L^2)}\,&\leq\,C\,T^{(3/4)-(1/2)}\,\left\|\nabla u\right\|^2_{L^4_T(L^4)} \\
&\leq\,C\,T^{1/4}\,\left(\|\nabla u\|^{1/4}_{L^{4}_T(L^2)}\,\left\|\nabla^2 u\right\|^{3/4}_{L^{4}_T(L^2)}\right)^2\ \leq\,C\,T^{1/4}\,\big(\mc U(T)\big)^2\,.
\end{align*}
Finally, writing $\rho=\bar\rho+r$, we split the term $\d_j\rho\,u\cdot\nabla u=\d_j\bar\rho\,u\cdot\nabla u+\d_jr\,u\cdot\nabla u$. We estimate 
on the one hand, similarly to \eqref{e.estunablau},
\begin{align*}
\left\|\d_j\bar\rho\,u\cdot\nabla u\right\|_{L^{4/3}_T(L^2)}\leq C 
T^\frac14\mc U(T),
\end{align*}
and on the other hand
\begin{align*}
\left\|\d_jr\,u\cdot\nabla u\right\|_{L^{4/3}_T(L^2)}\,&\leq\,C\,T^{(3/4)-(1/4)-(5/12)}\,\|\nabla r\|_{L^\infty_T(L^4)}\,\left\|\nabla u\right\|_{L^4_T(L^{6})}\,\left\|u\right\|_{L^{12/5}_T(L^{12})} \\
&\leq\,C\,T^{1/12}\,\left\|\nabla r\right\|^{1/4}_{L^\infty_T(L^2)}\,\left\|\nabla^2r\right\|^{3/4}_{L^\infty_T(L^2)} \\
&\qquad\qquad\qquad\qquad\qquad \times \left\|\nabla^2u\right\|_{L^4_T(L^{2})}\left\|u\right\|^{1/6}_{L^{\infty}_T(L^{2})}\,\left\|u\right\|^{5/6}_{L^{2}_T(L^{\infty})} \\
&\leq\,C\,T^{1/12}\,\mc R(T)\,\big(\mc U(T)\big)^2\,.
\end{align*}
We now turn to the last term in \eqref{e.eqdjf}. Thanks to \eqref{eq:P-G} and \eqref{eq:DP}, we have
\begin{align*}
\partial_j\big(\nabla P(\rho)-\rho\nabla G\big)=\ & P''\ (\d_j\bar\rho+s\d_jr)\nabla r + P' \d_j\nabla r + P'''\ (\d_j\bar\rho+s\d_jr) r \nabla\bar\rho \\
&+ P''\ \d_jr \nabla\bar\rho + P'' \ r \d_j\nabla\bar\rho + s P'''\ (\d_j\bar\rho+s\d_jr) r \nabla r \\
&+ sP''\ \d_jr \nabla r + sP'' \ r \d_j\nabla r-\partial_jr\nabla G-r\partial_j\nabla G
\end{align*}
where all the functions $P'$, $P''$ and $P'''$ are computed at the point $\bar\rho+sr$.
Making repeated use of \eqref{est:pressure}, \eqref{est:r_L^inf_final} and \eqref{est:interp},
\begin{equation*}
\big\|\partial_j\big(\nabla P(\rho)-\rho\nabla G\big)\big\|_{L^{4/3}_T(L^2)}\leq CT^{3/4}\left(\mc{R}(T) + \big(\mc R(T)\big)^2\right)\,,
\end{equation*}
where $C=C(P,\|\bar\rho\|_{W^{3,\infty}})>0$.  \\

\indent
Putting all those bounds together, assuming again $T\leq 1$, we deduce that 
$$
\|\nabla f\|_{L^{4/3}_T(L^2)}\,\leq\,C\,T^{1/12}\,\big(1\,+\,\mc R(T)\,+\,\mc U(T)\big)^3\,,
$$
which in turn implies that
\begin{equation} \label{est:f}
\|f\|_{L^{4/3}_T(H^1)}\,\leq\,C\,T^{1/12}\,\big(1\,+\,\mc R(T)\,+\,\mc U(T)\big)^3\,.
\end{equation}
In the end, inserting \eqref{est:f} into \eqref{est:U_2}, we find
\begin{equation} \label{est:U_final}
\mc U(T)\,\leq\,C\Big(\left\|u_{in}\right\|_{B^{3/2}_{2,4/3}}\,+\,T^{1/16}\,\big(
1\,+\,\mc R(T)\,+\,\mc U(T)\big)^3
\Big)\,.
\end{equation}
On the other hand, the integral term in \eqref{est:r_H^2} can be bounded by
\begin{align*}
\big(\|\bar\rho\|_{W^{3,\infty}}+\mc R(T)\big)\,\Big(T^{1/2}\|\nabla u\|_{L^2_T(L^\infty)}+T^{3/4}\left\|u\right\|_{L^4_T(H^2)}+T^{1/4}\left\|\nabla^3u\right\|_{L^{4/3}_T(L^2)}\Big)\,,
\end{align*}
which implies that
\begin{equation} \label{est:R_final}
\mc R(T)\,\leq\,C\left(\left\|r_{in}\right\|_{H^2}\,+\,T^{1/4}\,\big(1\,+\,\mc R(T)\,+\,\mc U(T)\big)^2\right)\,.
\end{equation}
Define now, for all $t\geq0$, the quantity
$$
\mc N(t)\,:=\,\mc R(t)\,+\,\mc U(t)\,.
$$
Summing up estimates \eqref{est:U_final} and \eqref{est:R_final}, we infer that
$$
\mc N(T)\,\leq\,C\left(\left\|u_{in}\right\|_{B^{3/2}_{2,4/3}}\,+\,\left\|r_{in}\right\|_{H^2}\,+\,T^{1/16}\,\Big(1\,+\,\mc N(T)\Big)^3\right)\,.
$$
From this inequality, it is a standard matter to deduce the existence of a time $T^*{(\gamma,\mu,\ep,\lambda,\|\bar\rho\|_{W^{3,\infty}},
\kappa,\|u_{in}\|_{B^{3/2}_{2,4/3}},\|r_{in}\|_{H^2})}>0$, with $T^*\leq\,\min\{1,T\}$, such that
$$
\mc N(T^*)\,\leq\,2\,C\Big(\left\|u_{in}\right\|_{B^{3/2}_{2,4/3}}\,+\,\left\|r_{in}\right\|_{H^2}\Big)\,.
$$
The \textsl{a priori} estimates are hence proved in the interval $[0,T^*]$.

\subsection{Uniqueness}
The uniqueness of solutions, claimed in Theorem \ref{th:wp}, is a consequence of the following statement.
\begin{proposition} \label{p:uniqueness}
{Let $\gamma\geq 1$.} Let $\bar\rho\in W^{3,\infty}(\R)$. Assume that $\bar\rho$ is uniformly bounded from below, i.e. $\bar\rho\geq\kappa>0$.  We define the potential by $G=H'(\bar\rho)$, where $H$ is defined by \eqref{def:H}. 
Let $\big(\rho_{in},u_{in}\big)$ be such that $\rho_{in}-\bar\rho\in H^2$ and $u_{in}\in B^{3/2}_{2,4/3}$, with $\left\|\rho_{in}\,-\,\bar\rho\right\|_{L^\infty}\,\leq\,\eta$, for some $\eta>0$ satisfying \eqref{eq:d-small}.
Assume that $\big(\rho^1,u^1\big)$ and $\big(\rho^2,u^2\big)$ are two solutions to system \eqref{eq:wp_nse}, related to the same initial datum $\big(\rho_{in},u_{in}\big)$ and belonging to the space
\begin{align*}
X_T\,:=\,&\Big\{\big(\rho,u\big)\in L^\infty_T(L^\infty)\times L^\infty_T(L^2)\;\big|\;\mbox{Properties (1)-(2) of Theorem \ref{th:wp} hold true}\Big\}\,,
\end{align*}
for some $T>0$.  \\
\indent
Then $\rho^1=\rho^2$ and $u^1=u^2$ almost everywhere in $[0,T]\times\Omega$.
\end{proposition}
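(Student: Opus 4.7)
\textbf{Strategy.} The plan is to perform coupled $L^2$-type energy estimates on the differences $\delta\rho:=\rho^1-\rho^2$ and $\delta u:=u^1-u^2$, and to close the argument via Gr\"onwall's lemma. Since $(\delta\rho,\delta u)_{|t=0}=(0,0)$, this will force $\delta\rho\equiv 0$ and $\delta u\equiv 0$ on $[0,T]\times\Omega$.

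\textbf{Equations for the differences.} Subtracting the mass equations of the two solutions gives
\[
\partial_t\delta\rho + u^2\cdot\nabla\delta\rho + \delta u\cdot\nabla\rho^1 + \delta\rho\,\nabla\cdot u^2 + \rho^1\nabla\cdot\delta u \,=\, 0,
\]
while subtracting the momentum equations and using the mass equation to recast $\partial_t(\rho^i u^i)+\nabla\!\cdot(\rho^i u^i\otimes u^i)=\rho^i\partial_tu^i+\rho^i u^i\cdot\nabla u^i$ yields
\begin{multline*}
\rho^1\partial_t\delta u + \rho^1 u^1\cdot\nabla\delta u + \mc L\delta u \\
\,=\, -\delta\rho\,\partial_tu^2 - \delta\rho\,u^1\cdot\nabla u^2 - \rho^2\,\delta u\cdot\nabla u^2 - \nabla\bigl(P(\rho^1)-P(\rho^2)\bigr) + \delta\rho\,\nabla G.
\end{multline*}

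\textbf{Coupled energy estimate.} Testing the first equation against $\delta\rho$ and the second against $\delta u$, integrating by parts, and once more invoking the mass equation for $\rho^1$ to process the term $\rho^1\partial_t|\delta u|^2/2$, I expect to obtain an inequality of the form
\[
\tfrac{d}{dt}\!\left(\tfrac12\|\delta\rho\|_{L^2}^2 + \tfrac12\!\int\!\rho^1|\delta u|^2\right) + \|\nabla_{\mu,\veps}\delta u\|_{L^2}^2 + \lambda\|\nabla\cdot\delta u\|_{L^2}^2 \,\leq\, \Phi(t).
\]
The pressure contribution is rewritten as $\int(P(\rho^1)-P(\rho^2))\,\nabla\!\cdot\delta u$ and handled through the mean-value theorem together with the uniform bound $\rho^i\in[\kappa/2,\|\bar\rho\|_{L^\infty}+4\eta]$ given by property (1), so that $|P(\rho^1)-P(\rho^2)|\lesssim|\delta\rho|$. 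The stratification term $\int\delta\rho\,\nabla G\cdot\delta u$ is harmless because $\nabla G=H''(\bar\rho)\,\bar\rho'\in L^\infty$. The convective coefficients $\|\nabla u^i\|_{L^\infty}$ belong to $L^2_T$, and $\|\nabla\rho^1\|_{L^3}$ to $L^\infty_T$ by the embedding $H^1\hookrightarrow L^3$ in~$\R^3$; combined with $\|\delta u\|_{L^6}\lesssim\|\nabla\delta u\|_{L^2}$ and Young's inequality, all such terms can be absorbed or fed to Gr\"onwall.

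\textbf{Main obstacle.} The most delicate term is $\int\delta\rho\,\partial_tu^2\cdot\delta u$, because $\partial_tu^2$ is only controlled in $L^{4/3}_T(H^1)\hookrightarrow L^{4/3}_T(L^6)$ from property (2). My plan is to bound it by $\|\partial_tu^2\|_{L^6}\|\delta\rho\|_{L^2}\|\delta u\|_{L^3}$, interpolate through Gagliardo-Nirenberg as $\|\delta u\|_{L^3}\lesssim\|\delta u\|_{L^2}^{1/2}\|\nabla\delta u\|_{L^2}^{1/2}$, and then apply Young's inequality to absorb a factor $\|\nabla\delta u\|_{L^2}^2$ into the viscous dissipation on the left-hand side. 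The residual time weight is $\|\partial_tu^2\|_{L^6}^{4/3}\in L^1_T$, which is precisely what Gr\"onwall requires. Note that the strong anisotropy of $\mc L$ poses no problem at the $L^2$ level, since testing against $\delta u$ still produces coercivity of $\nabla_{\mu,\veps}\delta u$ in $L^2$ and hence, via Sobolev embedding, of $\delta u$ in $L^6$, which is all that is needed to control the right-hand side.
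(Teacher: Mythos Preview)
Your approach is correct and takes a genuinely different, more elementary route than the paper's own proof. The paper estimates $\delta r=\rho^1-\rho^2$ in $H^1$ (not just $L^2$) and then applies the maximal regularity result of Proposition~\ref{p:DFP} to the equation for $\delta u$, obtaining control of $\|\delta u\|_{L^2_t(L^\infty)}$, $\|\nabla\delta u\|_{L^4_t(L^2)}$ and $\|(\partial_t\delta u,\nabla^2\delta u)\|_{L^{4/3}_t(L^2)}$; these higher norms are needed because the paper places the $L^4$ norm on $\delta r$ in the critical term $\int\delta\rho\,\partial_tu^2\cdot\delta u$, which forces $\delta r\in H^1$. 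You instead place the extra integrability on $\partial_tu^2$ (via $H^1\hookrightarrow L^6$) and on $\delta u$ (via Gagliardo--Nirenberg, feeding half a gradient back into the dissipation), which allows you to close with only $L^2$ control of $\delta\rho$. Since $\mu,\veps,\lambda$ are fixed positive constants here, testing with $\delta u$ indeed gives coercivity of the full gradient, so the absorption of the $\|\nabla\delta u\|_{L^2}^2$ pieces is unproblematic. Your argument is shorter and avoids the maximal regularity machinery in the uniqueness step; the paper's route, on the other hand, parallels the existence proof and yields stability in stronger norms, which could be useful if one wanted a weak--strong uniqueness statement where one of the two solutions has less regularity.
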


We show first a simple lemma.
\begin{lemma} \label{l:r-u_cont}
 Let $\big(\rho_{in},u_{in}\big)$ be as in Proposition \ref{p:uniqueness} above, and let $\big(\rho,u\big)$ a solution to system \eqref{eq:wp_nse}, related to the initial datum $\big(\rho_{in},u_{in}\big)$
and belonging to the space $X_T$, for some $T>0$. \\
\indent
Then $\rho\in\mc C\big([0,T]\times\Omega\big)$ and $u\in\mc C\big([0,T];H^1(\Omega)\big)$. In addition, the following estimate holds true,
for a ``universal'' constant $C>0$:
$$
\left\|u\right\|^2_{L^\infty_T(H^1)}\,\leq\,C\,\left(\left\|u_{in}\right\|_{B^{3/2}_{2,4/3}}^2\,+\,\left\|\d_tu\right\|_{L^2_T(L^2)}\,\left\|u\right\|_{L^2_T(H^2)}\right)\,.
$$
\end{lemma}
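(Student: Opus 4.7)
The plan is to split the statement into three pieces and handle them in the following order: continuity of $\rho$, derivation of the $H^1$ energy identity for $u$ (from which the estimate drops out), and finally time continuity of $u$ with values in $H^1$.

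For the continuity of $\rho$, I would simply invoke property (1) of Theorem~\ref{th:wp}, which already gives $\rho - \bar\rho \in \mathcal{C}_T(H^2(\R^3))$. Since $H^2(\R^3) \hookrightarrow \mathcal{C}_b(\R^3)$ by Sobolev embedding in dimension three and since $\bar\rho \in W^{3,\infty}(\R) \subset \mathcal{C}(\R)$ is independent of $t$, this immediately yields $\rho \in \mathcal{C}([0,T] \times \R^3)$.

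The heart of the proof is the energy identity
\begin{equation*}
\tfrac{1}{2}\tfrac{d}{dt} \|u\|_{H^1}^2 \;=\; \langle \partial_t u, u\rangle_{L^2} \,-\, \langle \partial_t u, \Delta u\rangle_{L^2}.
\end{equation*}
My first task is to justify this rigorously. A preliminary observation is that $\partial_t u \in L^2_T(L^2)$: this is \emph{not} built into the definition of $X_T$, but it follows by solving the momentum equation in \eqref{eq:wp_nse} for $\partial_t u$, using $\rho \geq \kappa/2$ under \eqref{est:r_small}, and repeating the argument of estimate \eqref{est:d_tu} already carried out in the proof of Theorem~\ref{th:wp}. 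Together with $u \in L^2_T(H^2) \cap L^\infty_T(L^2)$ (implied by property (2) of $X_T$ for finite $T$), this allows a classical time-mollification argument: smooth $u$ in $t$, write the pointwise identity for the smoothed velocity, and pass to the limit using $\partial_t u_h \to \partial_t u$ in $L^2_T(L^2)$ and $\Delta u_h \to \Delta u$ in $L^2_T(L^2)$. This upgrades the identity to a pointwise-in-time equality, and two applications of Cauchy--Schwarz in space and time give
\begin{equation*}
\|u(t)\|_{H^1}^2 \;\leq\; \|u_{in}\|_{H^1}^2 \,+\, C\,\|\partial_t u\|_{L^2_T(L^2)}\,\|u\|_{L^2_T(H^2)},
\end{equation*}
where the initial contribution is controlled by the Besov embedding $B^{3/2}_{2,4/3}(\R^3) \hookrightarrow B^{1}_{2,2}(\R^3) = H^1(\R^3)$ (same Lebesgue index, $3/2 > 1$).

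Finally, time continuity of $u$ in $H^1$ follows from the identity obtained above: the right-hand side, being an integral of an $L^1_T$ function, is absolutely continuous in $t$, so $t \mapsto \|u(t)\|_{H^1}^2$ is continuous on $[0,T]$. Combined with weak continuity $u(t_n) \rightharpoonup u(t)$ in $H^1$ (a routine consequence of $u \in L^\infty_T(H^1) \cap \mathcal{C}_T(L^2)$), this forces strong continuity. I expect the only delicate point to be the mollification step, in particular handling the endpoints $0$ and $T$ with a reflection or one-sided convolution so that the smoothed fields are defined on a slightly larger interval without perturbing the estimates.
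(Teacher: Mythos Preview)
Your proposal is correct and follows essentially the same approach as the paper. The paper's proof is very terse: for $\rho$ it uses the same Sobolev embedding $H^2\hookrightarrow\mc C\cap L^\infty$, and for $u$ it simply cites the interpolation result from Section~5.9 of Evans (Theorem~3, p.~305), which is precisely the statement you derive by hand via the energy identity $\tfrac{1}{2}\tfrac{d}{dt}\|u\|_{H^1}^2=\langle\partial_t u,u\rangle_{L^2}-\langle\partial_t u,\Delta u\rangle_{L^2}$ and mollification; both arguments rely on the same inputs ($\partial_t u\in L^2_T(L^2)$ from \eqref{est:d_tu}, $u\in L^2_T(H^2)$ from $X_T$, and the embedding $B^{3/2}_{2,4/3}\hookrightarrow H^1$).
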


\begin{proof}[Proof of Lemma \ref{l:r-u_cont}]
By definition of the space $X_T$, we know that $r\,:=\,\rho-\bar\rho$ belongs to $\mc C_T(H^2)$, which, by Sobolev embeddings, is continuously embedded in $\mc C_T(\mc C\cap L^\infty)$. Thus $\rho\,=\,\bar\rho+r$
is continuous with respect both $x$ and $t$. \\
\indent
Next, let us consider $u$. We have shown in \eqref{est:d_tu} above that $\d_tu\in L^2_T(L^2)$ and that $u\in L^2_T(H^2)$. From those properties, one easily derives (see e.g. Section 5.9 of \cite{Evans})
that $u\in\mc C_T(H^1)$. The quantitative estimate is a simple consequence of the bound given in \cite{Evans} (see Theorem 3 page 305 therein), combined with the embedding
$B^{3/2}_{2,4/3}\hookrightarrow H^1$. The lemma is hence proved.
\end{proof}

We can now prove uniqueness.

\begin{proof}[Proof of Proposition \ref{p:uniqueness}]
For $j=1,2$, let us define $r^j\,:=\,\rho^j-\bar\rho$. We also set
$$
\de r\,:=\,r^1\,-\,r^2=\,\rho^1\,-\,\rho^2\qquad\mbox{ and }\qquad \de u\,:=\,u^1\,-\,u^2\,.
$$
Let us deal with $\de r$ first. Its equation reads as follows:
\begin{align*}
\d_t\de r\,+\,u^1\cdot\nabla\de r\,+\,\de r\,\nabla\cdot u^1\,=\,-\left(\bar\rho\nabla\cdot\de u\,+\,\delta u_3\bar\rho'\,+\,\de u\cdot\nabla r^2\,+\,r^2\,\nabla\cdot\de u\right)\,.
\end{align*}
An $L^2$ estimate for this equation (see the beginning of Section \ref{ss:wp-density} above) yields, for all $t\in[0,T]$, the bound
\begin{align*}
\begin{split}
\left\|\de r(t)\right\|_{L^2}\,\leq\ &\left\|\delta r_{in}\right\|_{L^2}\,+\,\frac{1}{2}\int^t_0\|\delta r\|_{L^2}\,\|\nabla\cdot u^1\|_{L^\infty}\,d\tau\,\\
&+\,\int^t_0\Big(\left\|r^2\right\|_{L^\infty}\left\|\nabla\cdot\de u\right\|_{L^2}\,+\,
\left\|\de u\right\|_{L^\infty}\,\left\|\nabla r^2\right\|_{L^2}\Big)\,d\tau\\
&+\int_0^t\Big(\|\bar\rho'\|_{L^\infty}\|\delta u_3\|_{L^2}\,+\,\|\bar\rho\|_{L^\infty}\|\nabla\cdot\delta u\|_{L^2}\Big)\,d\tau.
\end{split}
\end{align*}
We also look for an $L^2$ bound on $\nabla\de r$. To this end, we differentiate the equation for $\de r$ with respect to the space
variables and we perform an energy estimate. We get
\begin{align}
\begin{split}
&\left\|\nabla\de r(t)\right\|_{L^2}\,\leq\,\left\|\nabla\de r_{in}\right\|_{L^2}\,+\,\frac{1}{2}\int^t_0\left\|\nabla\de r\right\|_{L^2}\,\left\|\nabla\cdot u^1\right\|_{L^\infty}\,d\tau \\ \label{est:de_r}
&\;+\int^t_0\Big(\left\|\nabla u^1\right\|_{L^\infty}\,\left\|\nabla\de r\right\|_{L^2}\,+\,\left\|\nabla\nabla\cdot u^1\right\|_{L^4}\,\left\|\de r\right\|_{L^4}\,+\,
\left\|\nabla\nabla\cdot\de u\right\|_{L^2}\,\left\|r^2\right\|_{L^\infty} \\
&\qquad\quad
+\,
\left\|\nabla\de u\right\|_{L^4}\,\left\|\nabla r^2\right\|_{L^4}\,+\,\left\|\de u\right\|_{L^\infty}\,\left\|\nabla^2r^2\right\|_{L^2}\,+\,\|\bar\rho\|_{W^{2,\infty}}\|\delta u\|_{H^2}\Big)\,d\tau\,.
\end{split}
\end{align}
Using the embedding $H^1(\R^3)\hookrightarrow L^4(\R^3)$ (see \eqref{est:interp} above), and summing up the previous estimate with \eqref{est:de_r}, we finally get
\begin{align}
\left\|\de r(t)\right\|_{H^1}\,&\leq\,\left\|\de r_{in}\right\|_{H^1}\,+\,C\int^t_0\left\|\de r\right\|_{H^1}\,\Big(\left\|\nabla u^1\right\|_{L^\infty}+\left\|\nabla^2u^1\right\|_{L^4}\Big)\,d\tau
\label{est:Dde_r} \\
&\;+C\int^t_0\Big(\big(\left\|r^2\right\|_{L^\infty}+\left\|\nabla r^2\right\|_{L^4}\big)\left(\left\|\nabla\de u\right\|_{L^2}+\left\|\nabla^2\de u\right\|_{L^2}\right) \nonumber \\
&\quad
+\,\left\|\de u\right\|_{L^\infty}\,\left\|\nabla^2r^2\right\|_{L^2}\,+\,\|\bar\rho\|_{W^{2,\infty}}\|\delta u\|_{H^2}\Big)\,d\tau\,. \nonumber
\end{align}
Observe that, by definition of the space $X_T$, we have $\nabla u^1\in L^2_T(L^\infty)$. In addition, from the control \eqref{est:interp} applied to $\left\|\nabla^2u^1\right\|_{L^4}$
and Young's inequality, we infer that $\nabla^2u^1\in L^{4/3}_T(L^4)$. By the same token we get $\nabla r^2\in L^\infty_T(L^4)$, while we already know that $r^2\in L^\infty_T(L^\infty)\cap L^\infty_T(H^2)$.
Hence, estimate \eqref{est:Dde_r} above tells us that the quantity
$$
\theta(t)\,:=\,\sup_{\tau\in[0,t]}\left\|\de r(\tau)\right\|_{H^1}
$$
verifies the following bound, for a suitable exponent $\alpha>0$:
\begin{align} \label{est:gamma}
\theta(t)\,&\leq\,C\,\left(t^{\alpha}\,\theta(t)\,+\,\int^t_0\left(\left\|\de u\right\|_{H^2}+\left\|\de u\right\|_{L^\infty}\right)\,d\tau\right)\,,
\end{align}
for all $t\in\big[0,\min\{T,1\}\big]$. Notice that here we used the fact that $\de r_{in}=0$. \\
\indent
We now turn to velocity estimates. Taking the difference of the equations for $u^1$ and $u^2$ yields the following equation for $\de u$:
\begin{align} 
&\rho^1\,\d_t\de u\,+\,\rho^1\,u^1\cdot\nabla \de u\,+\,\mc L\de u
\label{eq:de_u} \\
&\quad
=\,-\,\de r\,\d_tu^2\,-\,\nabla\big(P(\rho^1)-P(\rho^2)\big)+\delta r\nabla G\,-\,\big(\rho^1u^1-\rho^2u^2)\cdot\nabla u^2\,
. \nonumber
\end{align}
We observe that
\begin{align*}
\big(\rho^1u^1-\rho^2u^2)\cdot\nabla u^2\,&=\,\rho^1\,\de u\cdot\nabla u^2\,+\,\de r\,u^2\cdot\nabla u^2 \\
\nabla\big(P(\rho^1)-P(\rho^2)\big)\,&=\,P'(\rho^1)\,\nabla\de r\,+\,P''(\zeta)\,\de r\,\nabla(\bar\rho+r^2)\,,
\end{align*}
for some $\zeta=\zeta(x,t)$ in between the values of $\rho^1(x,t)$ and $\rho^2(x,t)$. 
A basic energy estimate for the equation then gives, for any $t\in[0,T]$, the bound
\begin{align*}
&\frac{1}{2}\,\frac{d}{dt}\left\|\sqrt{\rho^1}\,\de u\right\|^2_{L^2}+\int_\Omega\left|\nabla_{\mu,\veps}\de u\right|^2\,+\,\lambda\,\int_\Omega\left|\nabla\cdot \de u\right|^2 \\
&\quad\leq\,\left\|\de u\right\|_{L^2}\Big(\left\|\de r\right\|_{L^4}\,\left\|\d_tu^2\right\|_{L^4}\,+\,
\left\|\rho^1\right\|_{L^\infty}\,\left\|\de u\right\|_{L^2}\,\left\|\nabla u^2\right\|_{L^\infty} \\
&\qquad
+\,\left\|\de r\right\|_{L^4}\,\left\|u^2\right\|_{L^4}\,\left\|\nabla u^2\right\|_{L^\infty}\,+\,C\left\|\nabla\de r\right\|_{L^2}\,+\,C
\left\|\de r\right\|_{L^2}+C\left\|\de r\right\|_{L^4}\left\|\nabla r^2\right\|_{L^4}\Big)\,,
\end{align*}
where we have also used the $L^\infty_T(L^\infty)$ boundedness of $\rho^1$ and $\rho^2$ in order to control the terms involving derivatives of the pressure function.
Let us forget about the viscosity terms for a while. A simple argument allows one to deduce the following control, for any $t\in[0,T]$:
\begin{align*}
\left\|\de u\right\|_{L^\infty_t(L^2)}\,&\leq\,C\int^t_0\Big(\left\|\de u\right\|_{L^2}\,\left\|\nabla u^2\right\|_{L^\infty} \\
&\qquad\qquad +\,\theta(\tau)\,\big(1+\left\|\d_tu^2\right\|_{H^1}+\left\|u^2\right\|_{L^4}\left\|\nabla u^2\right\|_{L^\infty}+
\|\nabla r^2\|_{L^4}\big)\Big)\,d\tau\,,
\end{align*}
where we have used the fact that $\de u_{in}=0$. At this point, we observe that $u^2$ and $\nabla u^2$ both belong to $L^2_T(L^\infty)$. Moreover, by \eqref{est:interp} and the
fact that $u^2\in L^\infty_T(L^2)$ and $\nabla u^2\in L^4_T(L^2)$, one gathers that $u^2\in L^{16/3}_T(L^4)$. Finally, $\d_tu^2\in L^{4/3}_T(H^1)$.
In the end, similarly to what is done in \eqref{est:gamma}, we deduce
the existence of a positive exponent, that we keep calling $\alpha$ without loss of generality, such that
\begin{align} \label{est:de-u_energy}
\left\|\de u\right\|_{L^\infty_t(L^2)}\,&\leq\,C\,t^\alpha\,\left(\theta(t)\,+\,\left\|\de u\right\|_{L^\infty_t(L^2)}\right)
\end{align}
for all $t\in\big[0,\min\{T,1\}\big]$. \\
\indent
As a last step, we rewrite equation \eqref{eq:de_u} in the following form:
\begin{equation} \label{eq:de_u_MR}
\bar\rho\d_t\de u\,+\,\mc L\de u\,=\,-\left(r^1\,\d_t\de u\,+\,\de r\,\d_tu^2\,+\,\de f\right)\,,
\end{equation}
where we have defined
\begin{align*}
\de f\,&:=\,\rho^1\,u^1\cdot\nabla\de u\,+\,\rho^1\,\de u\cdot\nabla u^2\,+\,\de r\,u^2\cdot\nabla u^2 \\
&\qquad+\,P'\big(\rho^1\big)\,\nabla\de r\,+\,P''(\zeta)\,\de r\,\nabla(\bar\rho+r^2)-\delta r\nabla G\,.
\end{align*}
Applying Proposition \ref{p:DFP} to equation \eqref{eq:de_u_MR}, with the choice \eqref{eq:p-r} of the parameters, and using the smallness condition \eqref{eq:d-small}, we get,
for any $t\in[0,T]$, the inequality
\begin{align} \label{est:max-reg_u}
\begin{split}
&\left\|\de u\right\|_{L^{2}_t(L^{\infty})}\,+\,\left\|\nabla\de u\right\|_{L^{4}_t(L^{2})}\,+\,\left\|\left(\d_t\de u,\nabla^2\de u\right)\right\|_{L^{4/3}_t(L^{2})}\\
&\,\leq\, C\,\Big(\left\|\de u_{in}\right\|_{\dot B^{1/2}_{2,4/3}}\,+\,\left\|\de r\,\d_tu^2\right\|_{L^{4/3}_t(L^2)}\,+\,\|\de f\|_{L^{4/3}_t(L^{2})}\\
&\qquad\qquad\qquad\qquad\qquad\qquad\qquad\qquad\qquad
+\|\de u\|_{L^{4/3}_T(L^{2})}+\|\nabla \de u\|_{L^{4/3}_T(L^{2})}\Big)\,.
\end{split}
\end{align}
Recall that $\d_tu^2\in L^{2}_T(L^2)$ by virtue of \eqref{est:d_tu} and $\d_t\nabla u^2\in L^{4/3}_T(L^2)$ by definition of $X_T$.
Hence, using Sobolev embedding and interpolation \eqref{est:interp}, we deduce that $\d_tu^2\in L^{16/11}_T(L^4)$. Therefore, we can bound
\begin{align*}
\left\|\de r\,\d_tu^2\right\|_{L^{4/3}_t(L^2)}\,&\leq\,t^{3/4 - 11/16}\,\left\|\de r\right\|_{L^{\infty}_t(L^4)}\,\left\|\d_tu^2\right\|_{L^{16/11}_t(L^4)} \\
&\leq\,t^{1/16}\,\left\|\de r\right\|_{L^{\infty}_t(H^1)}\,\left\|\d_tu^2\right\|^{1/4}_{L^{2}_T(L^2)}\,\left\|\d_t\nabla u^2\right\|^{3/4}_{L^{4/3}_T(L^2)}\,\leq\,C\,t^{1/16}\,\theta(t)
\,.
\end{align*}
Next, we are going to bound $\de f$ in $L^{4/3}_t(L^2)$. This was already done for the energy estimate \eqref{est:de-u_energy} above, but here we have to take special care of the integrability
in time. For the pressure terms and the potential force term we have
\begin{align*}
&\left\|P''(\zeta)\,\de r\,\nabla(\bar\rho+r^2)\right\|_{L^{4/3}_t(L^2)} \\
&\qquad\leq\ C\,t^{3/4}\,\left(\left\|\de r\right\|_{L^\infty_t(L^2)}\|\nabla\bar\rho\|_{L^\infty_T(L^\infty)}+\left\|\de r\right\|_{L^\infty_t(L^4)}\left\|\nabla r^2\right\|_{L^\infty_T(L^4)}\right)
\leq\ C\,t^{3/4}\,\theta(t) \\
&\left\|P'\big(\rho^1\big)\,\nabla\de r\right\|_{L^{4/3}_t(L^2)}\,\leq\ C\,t^{3/4}\,\left\|\nabla\de r\right\|_{L^\infty_t(L^2)}\,\leq\,C\,t^{3/4}\,\theta(t) \\
&\|\delta r\nabla G\|_{L^{4/3}_t(L^2)}\leq Ct^{3/4}\theta(t)\,.
\end{align*}
Thus, ut remains us to bound the convective terms. First of all, recall that we showed above that $\left\|u^2\right\|_{L^4}\left\|\nabla u^2\right\|_{L^\infty}$ belongs to $L^{16/11}_T$. Therefore, 
\begin{align*}
\left\|\de r\,u^2\cdot\nabla u^2\right\|_{L^{4/3}_t(L^2)}\,\leq\,t^{(3/4)-(11/16)}\left\|\de r\right\|_{L^\infty_t(L^4)}\left\|u^2\cdot\nabla u^2\right\|_{L^{16/11}_T(L^4)}\,\leq\,
C t^{1/16}\theta(t)\,.
\end{align*}
In addition, we have
\begin{align*}
\left\|\rho^1\,\de u\cdot\nabla u^2\right\|_{L^{4/3}_t(L^2)}\,&\leq\,t^{(3/4)-(1/2)}\,\left\|\rho^1\right\|_{L^\infty_T(L^\infty)}\,\left\|\de u\right\|_{L^\infty_t(L^2)}\,
\left\|\nabla u^2\right\|_{L^2_T(L^\infty)} \\
&\leq\,C\,t^{1/4}\,\left\|\de u\right\|_{L^\infty_t(L^2)}\,.
\end{align*}
Finally, for the last term we use the following Gagliardo-Nirenberg type inequality (see e.g. Lemma II.3.3 of \cite{Galdi}):
$
\left\|u\right\|_{L^\infty}\,\leq\,\left\|u\right\|_{L^2}^{1/4}\,\left\|\nabla^2 u\right\|_{L^2}^{3/4}\,.
$
This, combined with Young's inequality, implies $u^2\in L^4_T(L^\infty)$ (recall the definition of the space $X_T$). Using this bound, we can estimate
\begin{align*}
\left\|\rho^1\,u^1\cdot\nabla\de u\right\|_{L^{4/3}_t(L^2)}\,&\leq\,t^{(3/4)-(1/2)}\left\|\rho^1\right\|_{L^\infty_T(L^\infty)}\,\left\|u^1\right\|_{L^4_T(L^\infty)}\,\left\|\nabla \de u\right\|_{L^4_t(L^2)} \\
&\leq\,C\,t^{1/4}\,\left\|\nabla \de u\right\|_{L^4_t(L^2)}\,.
\end{align*}
Putting all the previous estimates into \eqref{est:max-reg_u}, we have shown that there exists a positive exponent $\alpha>0$ for which, for all $t\in\big[0,\min\{1,T\}\big]$, one has
\begin{align} \label{est:de-u_MaxReg}
&\left\|\de u\right\|_{L^{2}_t(L^{\infty})}\,+\,\left\|\nabla\de u\right\|_{L^{4}_t(L^{2})}\,+\,\left\|\left(\d_t\de u,\nabla^2\de u\right)\right\|_{L^{4/3}_t(L^{2})} \\
&\qquad\qquad\qquad\qquad\qquad\qquad \leq\,C\,t^{\alpha}\,\Big(\theta(t)\,+\,\left\|\de u\right\|_{L^\infty_t(L^2)}\,+\,\left\|\nabla \de u\right\|_{L^4_t(L^2)}\Big)\,. \nonumber
\end{align}
\indent Let us now introduce the quantity
$$
\mc D(t)\,:=\,\left\|\de u\right\|_{L^\infty_t(L^2)}\,+\,\left\|\de u\right\|_{L^{2}_t(L^{\infty})}\,+\,\left\|\nabla\de u\right\|_{L^{4}_t(L^{2})}\,+\,
\left\|\left(\d_t\de u,\nabla^2\de u\right)\right\|_{L^{4/3}_t(L^{2})}\,.
$$
Summing up inequalities \eqref{est:gamma}, \eqref{est:de-u_energy} and \eqref{est:de-u_MaxReg}, we finally deduce that, for all $t\in\big[0,\min\{1,T\}\big]$, we have
$$
\theta(t)\,+\,\mc D(t)\,\leq\,C\,t^{\alpha}\,\big(\theta(t)\,+\,\mc D(t)\big)\,,
$$
for a suitable exponent $\alpha>0$. Therefore, if $t$ is now small enough, we can absorbe the right hand side into the left-hand side, deducing that both $\theta(t)$ and $\mc D(t)$ have to be $0$.
In particular, we deduce that $\rho^1\equiv\rho^2$ and $u^1\equiv u^2$ almost everywhere on $[0,t]\times\Omega$.  In this way, we also infer that
$$
\left\|\nabla\de u\right\|_{L^2_t(L^\infty)}\,+\,\left\|\left(\nabla^3\de u\,,\,\d_t\nabla\de u\right)\right\|_{L^{4/3}_t(L^2)}\,=\,0\,,
$$
from which we deduce that
$$
\left\|\de u\right\|_{L^\infty_t(H^1)}\,=\,0\,,
$$
where we have used also \eqref{est:d_tu} and the estimate in Lemma \ref{l:r-u_cont}.  \\
\indent
To complete the argument, let us define the set
$$
I\,:=\,\left\{t\in[0,T]\;\big|\quad \left\|\rho^1(\tau)-\rho^2(\tau)\right\|_{H^1}\,+\,\left\|u^1(\tau)-u^2(\tau)\right\|_{H^1}\,=\,0,\quad \forall\;\tau\in[0,t]\right\}\,.
$$
Of course, $I\neq\emptyset$, since $0\in I$. In addition,
the previous argument, combined with Lemma \ref{l:r-u_cont}, shows that $I$ is open. On the other hand, by continuity in time of the norms appearing in the definition of the space $X_T$
(see again Lemma \ref{l:r-u_cont} above), we infer that
$I$ is also closed. Then, by connectedness, we must have $I=[0,T]$. This completes the proof of the proposition.
\end{proof}

\section{Quantitative asymptotic analysis with stratification effects and anisotropic diffusion} \label{sec.quant}

The goal of this section is to analyze the structure of the solutions to the highly rotating compressible system \eqref{e.nse} with vertical stratification.
The main result of this part is contained in Theorem \ref{prop.main}, see Subsection \ref{ss:rel-entropy}: there we derive an asymptotic expansion and quantify the error in terms of the parameter $\ep$.
This stability result relies on relative entropy estimates.

\subsection{Formal asymptotic expansion}
\label{sec.formal}

In this section we perform formal computations in order to have a grasp on the structure of the solutions to sytem \eqref{e.nse}. Because of the no-slip boundary conditions \eqref{hyp:bound-cond},
boundary layers appear in the limit $\ep\rightarrow 0$ both in the vicinity of the top boundary $\R^2\times\{1\}$ and of the bottom boundary $\R^2\times\{0\}$. 

We will specify later on the precise hypotheses on the initial conditions
\begin{equation*}
\rho_{|t=0}\,=\,\rho_{in}\qquad\mbox{ and }\qquad u_{|t=0}\,=\,u_{in}
\end{equation*}
at time $t=0$.
For the purpose of the formal analysis, let us say in a loose way that we impose the following \emph{far field conditions}, for $|x|\ra\infty$:
$$
\rho_{in}(x)\,\longrightarrow\,\oline\rho(x_3)\qquad\mbox{ and }\qquad u_{in}(x)\,\longrightarrow\,0\,,
$$
where $\oline\rho$ is a strictly positive function satisfying the logistic equation
\begin{equation} \label{eq:bar-rho}
\nabla P(\oline\rho)\,=\,\oline\rho\,\nabla G\,.
\end{equation}
In addition, the initial densities are assumed to be far away from vacuum.
Moreover, we focus on \emph{well-prepared initial data}, in the sense specified in Subsection \ref{ss:rel-entropy} below.

\subsubsection{Construction of the ansatz} \label{ss:ansatz}

We start with expanding the solution $(u^\ep,\rho^\ep)$ to \eqref{e.nse} as
\begin{align}
\begin{split}
\label{e.ansatz}
u^\ep\,&=\,u_0(x_h,x_3,t)+u_{0,b}^{bl}(x_h,\tfrac{x_3}\ep,t)+u_{0,t}^{bl}(x_h,\tfrac{1-x_3}\ep,t)\\
&\qquad+\ep\left(u_1(x_h,x_3,t)+u_{1,b}^{bl}(x_h,\tfrac{x_3}\ep,t)+u_{1,t}^{bl}(x_h,\tfrac{1-x_3}\ep,t)\right)+O(\ep^2)\\
\rho^\ep\,&=\,\rho_0(x_h,x_3,t)+\rho_{0,b}^{bl}(x_h,\tfrac{x_3}\ep,t)+\rho_{0,t}^{bl}(x_h,\tfrac{1-x_3}\ep,t) \\
&\qquad+\ep\left(\rho_1(x_h,x_3,t)+\rho_{1,b}^{bl}(x_h,\tfrac{x_3}\ep,t)+\rho_{1,t}^{bl}(x_h,\tfrac{1-x_3}\ep,t)\right)\\&\qquad
+\ep^2\left(\rho_2(x_h,x_3,t)+\rho_{2,b}^{bl}(x_h,\tfrac{x_3}\ep,t)+\rho_{2,t}^{bl}(x_h,\tfrac{1-x_3}\ep,t)\right)+O(\ep^3)\,.
\end{split}
\end{align}
The superscript $bl$ stands for ``boundary layer'', while the subscripts $b$ and $t$ stand for ``bottom'' and ``top'' respectively.
For simplicity of the presentation, in the next computations we are going to consider only the boundary layer near the bottom, since the terms related to the top boundary layer
are dealt with in the exact same way. Therefore, from now on we omit the subscript $b$ for the boundary layer terms. However, when needed, we will explicitly write $t$ or $b$ subscripts to avoid confusion.\\
Below we denote by $\zeta=\tfrac {x_3}\ep$ the fast vertical variable in the boundary layer. The boundary layer profiles are supposed to decay to $0$ at exponential rate when $\z\ra\infty$, since their effect is almost negligible in the interior of the domain: we will use this fact repeatedly in the following computations.\\
We remark that, at this level, \eqref{e.ansatz} is just a formal ansatz.
As is usual, we will first formally derive the equations for the profiles: this is the purpose of the present section. After that, we will prove quantitative estimates for the difference
between the solution and the profiles we have constructed, using the relative entropy method: this will be done in Section \ref{sec.quant}.

\noindent\underline{Identification of the profiles.}
In order to identify the profiles, we plug the ansatz \eqref{e.ansatz} into \eqref{e.nse} and identify the terms of the same order of magnitude in $\ep$.
We immediately notice that the highest order term is a term of order $\veps^{-3}$, which appears in the third component of the momentum equation:
$$
P'(\rho_0+\rho_0^{bl})\,\d_\z\rho_0^{bl}\,=\,0\,.
$$
We assume that $\rho_0+\rho_0^{bl}$ stays bounded away from zero. This hypothesis is fully justified here below.
In view of the hypothesis \eqref{hyp:p} on the pressure 
and the fact that $\rho_0^{bl}$ has to vanish for $\z\ra\infty$, we immediately deduce that $\rho_0^{bl}\,\equiv\,0$. 
Thanks to that property, and ignoring the terms of order $O(\ep^2)$, which have been neglected in \eqref{e.ansatz} in the expansion of the velocity fields,
we find the following cascade of equations: from the conservation of mass equation, we get
\begin{align}
&\rho_0\partial_\zeta u_{0,3}^{bl}\,=\,0\label{e.ep-1a} \tag{mass-$\ep^{-1}$}\\
&\partial_t\rho_0+\nabla_h\cdot\left(\rho_0(u_{0,h}+u_{0,h}^{bl})\right)+\partial_3(\rho_0u_{0,3})+ \partial_3\rho_0\,u_{0,3}^{bl}\label{e.ep0a}\tag{mass-$\ep^{0}$}\\
&\qquad\qquad\qquad\qquad+\rho_1\partial_\zeta u_{0,3}^{bl}+\partial_\zeta(\rho_1^{bl}u_{0,3}^{bl})+ \partial_\zeta\rho_1^{bl}\, u_{0,3}
+\rho_0\partial_\zeta u_{1,3}^{bl}=0\,, \nonumber
\end{align}
and from the momentum equation we get
\begin{align}
&\nabla P(\rho_0)+ \left(\begin{array}{c}0\\[1ex]
P'(\rho_0)\partial_\zeta\rho_1^{bl}\end{array}\right) =\left(\begin{array}{c}0\\[1ex]{\lambda}\partial_\zeta^2u_{0,3}^{bl}\end{array}\right) + \rho_0 \nabla G\label{e.ep-2}
\tag{mom-$\ep^{-2}$}
\end{align}
\begin{align}
&\rho_0(u_{0,3}+u_{0,3}^{bl})\cdot\partial_\zeta u_{0}^{bl}+e_3\times\rho_0(u_0+u_0^{bl})+\label{e.ep-1}
\tag{mom-$\ep^{-1}$}\\[1ex]
&\left(\begin{array}{c}\nabla_h(P'(\rho_0)(\rho_1+\rho_1^{bl}))\\[1ex]
\partial_3(P'(\rho_0)\rho_1) +\partial_3(P'(\rho_0))\rho_1^{bl}\, +\,P''(\rho_0)\,\rho_1^{bl}\,\partial_\zeta\rho_1^{bl}\,+\,P'(\rho_0)\,\partial_\zeta \rho_2^{bl}\end{array}\right)\notag\\
&=\partial_\zeta^2\left(\begin{array}{c}
u_{0,h}^{bl}\\[1ex] 0
\end{array}\right)+\lambda\left(\begin{array}{c}
\nabla_h\partial_\zeta u_{0,3}^{bl}\\[1ex]
\partial_\zeta\nabla_h\cdot u_{0,h}^{bl}+\partial_\zeta^2u_{1,3}^{bl}
\end{array}\right)\,+ (\rho_1 + \rho_1^{bl})\nabla G.\notag
\end{align}

We will examine the equation at order $O(\ep^0)$ coming from the momentum equation later. Let us first infer some properties for the profiles. 

\noindent\underline{The terms in the interior.} Recall that the boundary layer profiles are expected to go to zero when $\z\ra \infty$. Therefore, it follows from \eqref{e.ep-2} that
\begin{equation} \label{eq:rho_0}
\nabla P(\rho_0)\,=\rho_0 \, \nabla G\,,
\end{equation}
which yields, by using \eqref{hyp:p}, the properties
\begin{equation}\label{e.static}
H'(\rho_0)= G + c(t) \qquad \qquad \mbox{ and }\qquad\qquad \nabla_h\rho_0=0\,.
\end{equation}
Hence $\rho_0$ is independent of $x_h$, namely $\rho_0\,=\,\rho_0(x_3,t)$, and satisfies the ODE 
\begin{equation}\label{e.oderho0}
P'(\rho_0)\partial_3 \rho_0= -\rho_0.
\end{equation}
Since $P'\in \mc C^1\left((0,\infty)\right)$ and non-zero, we can use Cauchy-Lipschitz theorem to get that $\rho_0(t)\in \mc C^1\left((0,1)\right)$, and hence bounded.
Moreover, from \eqref{e.ep-1} we infer
\begin{equation*}
\rho_0 \left(\begin{array}{c}
u_{0,h}^\perp\\[1ex] 0
\end{array}\right)+\left(\begin{array}{c}
P'(\rho_0)\nabla_h\rho_1\\[1ex] \partial_3(P'(\rho_0)\rho_1)
\end{array}\right)=\,\rho_1 \, \nabla G\,.
\end{equation*}
This equation is called \emph{geostrophic balance}; it implies the Taylor-Proudman theorem (see Section \ref{s:intro}). 
In particular, its third component reads
\begin{equation*}
\partial_3(P'(\rho_0)\rho_1)\,=\,-\rho_1\,.
\end{equation*}
Using the previous relation together with \eqref{eq:rho_0}, we get 
\begin{equation}\label{e.qindepx3}
\partial_3\left(\dfrac{P'(\rho_0)}{\rho_0}\rho_1\right)= 0\,,
\end{equation} hence the quantity 
\begin{equation}\label{e.defq}
Q:=\frac{P'(\rho_0)}{\rho_0}\rho_1\qquad\mbox{ verifies }\qquad Q\,=\,Q(t,x_h)\,,
\end{equation} 
\textit{i.e.} $Q$ is independent of the vertical variable. From the horizontal component, instead, we get (recall that $\nabla_h\rho_0=0$)
\begin{equation}\label{e.u0h}
u_{0,h}\,=\,\nabla_h^\perp\,\left(\frac{P'(\rho_0)}{\rho_0}\,\rho_1\right)\,=\,\nabla^\perp_hQ\,.
\end{equation}
In particular, we deduce that $u_{0,h}=u_{0,h}(x_h,t)$, which justifies the introduction of boundary layer terms in order to enforce the no-slip boundary conditions on $x_3=0,\ 1$.
In addition, applying the horizontal divergence we obtain
\begin{equation*}
\nabla_h\cdot u_{0,h}\,=\,\nabla_h\cdot\nabla_h^\perp\left(\frac{P'(\rho_0)}{\rho_0}\,\rho_1\right)\,=\,0\,,
\end{equation*}
so that $u_{0,h}$ is a $2$-D horizontal divergence-free vector field.

We now exploit \eqref{e.ep0a}: considering it in the interior of the domain (i.e., neglecting the boundary terms) and using the inequalities just proved, after an integration
in the vertical variable we infer that
\begin{equation}\label{e.averagerho0}
\int_0^1\partial_t\rho_0\,dx_3\,=\,-\int_0^1\partial_3\left(\rho_0\,u_{0,3}\right)\,dx_3\,=\,0\,.
\end{equation}
By taking the time derivative of \eqref{e.static} and using \eqref{hyp:p} we have 
\begin{equation*}
\partial_t \rho_0= \frac{\partial_t c}{H''(\rho_0)}
\end{equation*} We integrate in the vertical variable and, from \eqref{e.averagerho0}, we get $\partial_t c=0$, hence $\partial_t \rho_0=0$.
This implies that $\rho_0$ has to be independent also of time, and hence it is equal to a positive function $\oline\rho(x_3)$, solution of \eqref{eq:rho_0}, or equivalently \eqref{eq:bar-rho}.
Thanks to this fact, we have now that $\partial_3(\rho_0 \,u_{0,3})=0$. Using the no-slip boundary condition and the positivity of $\rho_0\,=\,\oline \rho$, we find $u_{0,3}\equiv0$.\\
\emph{From now on $\bar\rho$ denotes $\rho_0$}.
Let us now consider the equations outside the boundary layers, at order $O(\ep)$ in the mass equation, \begin{equation}\label{e.ep1a}\tag{mass-$\ep^1$}
\partial_t\rho_1\,+\,\nabla\cdot (\oline \rho \,u_1)+\nabla_h\cdot(\rho_1 \,u_{0,h} )=0\,,
\end{equation}
and at order $O(\ep^0)$ in the momentum equation,
\begin{align}\label{e.ep0}\tag{mom-$\ep^0$}
&\oline \rho\,\partial_t u_0+\nabla\cdot(\oline \rho\,u_0\otimes u_0)+e_3\times\left(\rho_1u_0+\oline \rho\,u_1\right) \\
&+\,\nabla\left(\frac{P''(\oline \rho)}{2}\rho_1^2 + P'(\oline\rho)\,\rho_2\right)\,=\,\mu\Delta_hu_0+\lambda\nabla(\nabla\cdot u_0)\,+\,\rho_2\,\nabla G. \notag
\end{align}
Recall that $u_0\,=\,\left(u_{0,h}(t,x_h),0\right)$.
Taking the curl of the horizontal component in \eqref{e.ep0}, we obtain an equation for the horizontal vorticity $\omega_0=\nabla_h^\perp\cdot u_{0,h}$:
\begin{align*}
\oline \rho\,\partial_t\omega_0+\oline \rho\,u_{0,h}\cdot\nabla_h\omega_0+\nabla_h\cdot(\rho_1\,u_{0,h})+\nabla_h\cdot (\oline \rho\,u_{1,h})-\mu\Delta_h\omega_0\,=\,0\,.
\end{align*}
Notice that, by \eqref{e.u0h}, we get
\begin{equation*}
\omega_0\,=\,\omega_0(t,x_h)\,=\,\Delta_hQ\,, 
\end{equation*}
where $Q$ is defined in \eqref{e.defq}; from the previous relation it follows that
\begin{equation}\label{e.qga}
\oline\rho\,\partial_t\Delta_h Q+\,\oline\rho\,\nabla_h^\perp Q\cdot\nabla_h\Delta_h Q+\nabla_h\cdot(\oline\rho\, u_{1,h})-\mu\Delta_h^2 Q=0\,,
\end{equation}
where we have used the cancellation
\begin{equation}\label{e.cancel}
\nabla_h\cdot(\rho_1\nabla_h^\perp\rho_1)=\dfrac12\nabla_h\cdot\nabla_h^\perp(\rho_1^2)=0
\end{equation}
in order to get rid of the term $\nabla_h\cdot(\rho_1\,u_{0,h})$.
In order to compute the term $\nabla_h\cdot (\oline\rho\,u_{1,h})$ in \eqref{e.qga}, we use equation \eqref{e.ep1a} and the cancellation \eqref{e.cancel} again: we find
\begin{equation}\label{e.d3u13}
\nabla_h\cdot \left(\oline \rho\, u_{1,h}\right)=-\partial_t\rho_1-\nabla_h\cdot(\rho_1u_{0,h})-\partial_3(\oline\rho \,u_{1,3})=-\partial_t\rho_1-\partial_3(\oline \rho\,u_{1,3})\,.
\end{equation}
After integrating in $x_3$ both \eqref{e.qga} and \eqref{e.d3u13} and summing up the resulting expressions, we eventually obtain
\begin{equation}\label{e.qgb}
\begin{aligned}
\partial_t\left(\left\langle\oline\rho\right\rangle\Delta_h Q-\left\langle \tfrac{\oline\rho}{P'(\oline\rho)}\right\rangle Q\right)&+
\left\langle\oline\rho\right\rangle\nabla_h^\perp Q\cdot\nabla_h\Delta_h Q-\mu\Delta_h^2 Q\\
&=\oline\rho(1)\,u_{1,3}(x_h,1,t)-\oline\rho(0)\,u_{1,3}(x_h,0,t)\,,
\end{aligned}
\end{equation}
where $\langle f\rangle=\int_{0}^{1}f(x_3)\,dx_3$ denotes the vertical mean of $f$.

\noindent\underline{Boundary layer terms.} 
We now consider the boundary layer terms. These terms are crucial to compute the right hand side of \eqref{e.qgb}: indeed
\begin{align}
\label{e.bdarycondu2}
u_{j,3,b}^{bl}(x_h,0,t)=-u_{j,3}(x_h,0,t)\qquad\mbox{ and }\qquad u_{j,3,t}^{bl}(x_h,0,t)=-u_{j,3}(x_h,1,t)
\end{align}
for $j=0,1$, in order to enforce the no-slip boundary condition on the bottom and top boundaries.\\ 
First of all, \eqref{e.ep-1a} yields $u_{0,3}^{bl}=u_{0,3}^{bl}(x_h,t)$, and hence 
\begin{equation}\label{e.u0bl}
u_{0,3}^{bl}\,\equiv\,0\,.
\end{equation}
Using \eqref{e.u0bl}, we obtain from \eqref{e.ep-2} that
\begin{equation*}
P'(\oline\rho)\,\partial_\zeta\rho_1^{bl}=\lambda\,\partial_\zeta^2u_{0,3}^{bl}=0\,.
\end{equation*}
Hence, thanks to \eqref{hyp:p}, $\rho_1^{bl}=\rho_1^{bl}(x_h,t)$ is constant in the boundary layer and goes to zero when $\zeta\rightarrow\infty$, therefore $\rho_1^{bl}\equiv0$.
Taking into account this last equality and reading the horizontal component of \eqref{e.ep-1}, one has
\begin{equation}\label{e.bl}
\oline \rho\,(u_{0,h}^{bl})^\perp=\partial_\zeta^2u_{0,h}^{bl}\,.
\end{equation} Notice that, in \eqref{e.bl}, $x_h$ is a parameter. 
We use Taylor formula at first order
\begin{equation*}
\oline\rho(x_3)\,=\, \oline \rho (0)\,+\,x_3\int_{0}^{1}\partial_3 \oline\rho(s\,x_3)\,ds
\end{equation*}
to write \eqref{e.bl} as
\begin{equation}\label{e.bl2}
\oline \rho(0)\,(u_{0,h}^{bl})^\perp + \left(x_3\int_{0}^{1}\partial_3 \oline\rho\,(s\,x_3)\,ds\right)\, (u_{0,h}^{bl})^\perp =\partial_\zeta^2u_{0,h}^{bl}\,.
\end{equation}
Let us now consider the equation
\begin{equation}\label{e.blspiral}
\oline \rho(0)\,(u_{0,h}^{bl})^\perp =\partial_\zeta^2u_{0,h}^{bl}\,,
\end{equation}
supplemented with the boundary condition 
\begin{equation}\label{e.bdaryu0bl}
u_{0,h}^{bl}(x_h,0,t)=-u_{0,h}(x_h,t)
\end{equation}
at $\zeta=0$, in view of \eqref{hyp:bound-cond} and  \eqref{e.bdarycondu2}.
We remark that the system of ODEs \eqref{e.blspiral}-\eqref{e.bdaryu0bl} is the same (here in general $\oline \rho(0)\neq 1$) as in the incompressible case, see e.g. Chapter 7 of \cite{C-D-G-G}
and references therein.
Its solutions are exponentially decaying and have a spiral structure. Indeed, we have the following formula:
\begin{align*}
&u_{0,h,b}^{bl}(x_h,\zeta,t)\,= \\
&-\left(
\begin{array}{c}
e^{-\zeta\sqrt{\frac{\oline \rho(0)}{2}}}\left[u_{0,1}(x_h,t)\cos\left(\zeta\sqrt{\frac{\oline \rho(0)}{2}}\right)+u_{0,2}(x_h,t)\sin\left(\zeta\sqrt{\frac{\oline \rho(0)}{2}}\right)\right]\\[2ex]
e^{-\zeta\sqrt{\frac{\oline \rho(0)}{2}}}\left[-u_{0,1}(x_h,t)\sin\left(\zeta\sqrt{\frac{\oline \rho(0)}{2}}\right)+u_{0,2}(x_h,t)\cos\left(\zeta\sqrt{\frac{\oline \rho(0)}{2}}\right)\right]
\end{array}
\right)\,.
\end{align*}
Let us move further. The vertical component in \eqref{e.ep-1} is
\begin{equation}\label{e.ep-10}
0=\lambda\,\left(\partial_\zeta\nabla_h\cdot u_{0,h}^{bl}+\partial_\zeta^2u_{1,3}^{bl}\right)\,+\,P'(\oline\rho)\,\partial_\zeta \rho_2^{bl}\,.
\end{equation}
Equation \eqref{e.ep0a}, together with the fact the $\oline\rho$ is strictly positive, yields
\begin{equation}\label{e.u0u1}
\nabla_h\cdot u_{0,h}^{bl}+\partial_\zeta u_{1,3}^{bl}=0\,.
\end{equation}
Hence $P'(\oline\rho)\,\partial_\zeta \rho_2^{bl}=0$ and, similarly to the argument used for $\rho_1^{bl}$, we get $\rho_2^{bl}\equiv0$.
The previous equality \eqref{e.u0u1} determines $u_{1,3}^{bl}$ up to a constant in $\zeta$, which we take so that $u_{1,3}^{bl}$ converges to zero when $\zeta\rightarrow\infty$:
\begin{equation*}
u_{1,3,b}^{bl}(x_h,\zeta,t)\,=\,-\,\frac{\,e^{-\zeta\sqrt{\frac{\oline \rho(0)}{2}}}}{\sqrt{2\oline\rho(0)}}\,\left(\cos\left(\zeta\sqrt{\frac{\oline \rho(0)}{2}}\right)+\sin\left(\zeta\sqrt{\frac{\oline \rho(0)}{2}}\right)\right)\,\nabla_h^\perp\cdot u_{0,h}(x_h,t)\,.
\end{equation*}
Similar computations can be done for the top boundary layers. 
Indeed, denoting by $\eta=\frac{1-x_3}{\ep}$ the fast vertical variable in the upper boundary layer, we use Taylor formula at first order \begin{equation}
\oline \rho (x_3)= \oline \rho (1) - (1-x_3)\int_{0}^{1}\partial_3 \,\oline\rho (1-s(1-x_3))ds
\end{equation} to define $u^{bl}_{0,h,t}$ as the solution to the equation
\begin{equation}\label{e.blspiraltop}
\oline \rho(1)\,(u_{0,h,t}^{bl})^\perp =\partial_\eta^2 u_{0,h,t}^{bl}\,,
\end{equation}
supplemented with the boundary condition 
\begin{equation}\label{e.bdaryu0bltop}
u_{0,h,t}^{bl}(x_h,0,t)=-u_{0,h}(x_h,t)
\end{equation}
at $\eta=0$, recall \eqref{e.bdarycondu2}. We have 
\begin{align*}
&u_{0,h,t}^{bl}(x_h,\eta,t)\,= \\
&-\left(
\begin{array}{c}
e^{-\eta\sqrt{\frac{\oline \rho(1)}{2}}}\left[u_{0,1}(x_h,t)\cos\left(\eta\sqrt{\tfrac{\oline \rho(1)}{2}}\right)+u_{0,2}(x_h,t)\sin\left(\eta\sqrt{\tfrac{\oline \rho(1)}{2}}\right)\right]\\[2ex]
e^{-\eta\sqrt{\frac{\oline \rho(1)}{2}}}\left[-u_{0,1}(x_h,t)\sin\left(\eta\sqrt{\tfrac{\oline \rho(1)}{2}}\right)+u_{0,2}(x_h,t)\cos\left(\eta\sqrt{\tfrac{\oline \rho(1)}{2}}\right)\right]
\end{array}
\right)\,.
\end{align*}
and, from \eqref{e.u0u1} with $\partial_\zeta$ replaced by $-\partial_\eta$,
\begin{equation*}
u_{1,3,t}^{bl}(x_h,\eta,t)\,=\,\,\frac{\,e^{-\eta\sqrt{\frac{\oline \rho(1)}{2}}}}{\sqrt{2\oline\rho(1)}}\,\left(\cos\left(\eta\sqrt{\tfrac{\oline \rho(1)}{2}}\right)+\sin\left(\eta\sqrt{\tfrac{\oline \rho(1)}{2}}\right)\right)\,\nabla_h^\perp\cdot u_{0,h}(x_h,t)\,.
\end{equation*}
Hence, using \eqref{e.bdarycondu2}, one can now compute the right hand side of equation \eqref{e.qgb}:
\begin{equation}
\begin{aligned}
\oline\rho(1)\,u_{1,3}(x_h,1,t)-\oline\rho(0)\,u_{1,3}(x_h,0,t)\,&= \,-\oline\rho(1)u_{1,3,t}^{bl}(x_h,0,t)+\oline\rho(0)u_{1,3,b}^{bl}(x_h,0,t)
\\& =\,-\,\tfrac{\sqrt{\oline\rho(0)}+\sqrt{\oline\rho(1)}}{\sqrt{2}}\,\omega_0\,=\,-\,\tfrac{\sqrt{\oline\rho(0)}+\sqrt{\oline\rho(1)}}{\sqrt{2}}\,\Delta_hQ\,.
\end{aligned}
\label{e.damping}
\end{equation}
This is the so-called \emph{Ekman pumping term}, which represents the secondary (global) circulation created by the boundary layer. It appears as a damping term for the quasi-geostrophic dynamics,
described by equation \eqref{e.qgb}.

\noindent\underline{Final choices for correctors.} 
It remains to choose the functions $\rho_2$, $u_1$ and $u_{1,h}^{bl}$. These terms are auxiliary terms which do not appear in the final result.\\ 
We choose the interior terms in order to make the terms of order $O(\ep)$ in the mass equation and the terms of order $O(\ep^0)$ in the momentum equation vanish
identically. Notice that \eqref{e.ep0} 
determines $u_{1,h}$ in terms of $u_0$, $\rho_1$ and $\rho_2$, and hence, through relation \eqref{e.u0h}, in terms of $\rho_1$ and $\rho_2$ only.
Specifically,
\begin{align}\label{e.equ1h}
u_{1,h}&:=\frac1{\oline\rho}\,\bigg(-\mu\Delta_hu_{0,h}^\perp+\oline\rho\partial_tu_{0,h}^\perp \\
&\qquad\qquad+\oline\rho u_{0,h}\cdot\nabla_hu_{0,h}^\perp-u_{0,h}\rho_1+\nabla_h^\perp\bigg(
P'(\oline\rho)\rho_2+\frac{P''(\oline \rho)}{2}\rho_1^2\bigg)\bigg)\,. \nonumber
\end{align}
Next, the vertical component of \eqref{e.ep0} reads
\begin{equation}\label{e.verticalep0}
\partial_3\,\left(P'(\oline\rho)\,\rho_2\,+\,\frac{P''(\oline \rho)}{2}\,\rho_1^2\right)=\,-\,\rho_2
\end{equation}where we have used that $u_{0,3}\equiv0.$ Since, by \eqref{hyp:p}, $P'(\oline\rho)>0$, $\rho_2$ can be defined as the solution of the ODE 
\begin{equation}\label{e.oderho2}
\partial_3 \,\rho_2 \,+\,\frac{\partial_3\left(P'(\oline\rho)\right)+1}{P'(\oline\rho)}\,\rho_2\,=\,-\,\frac{\partial_3\left(P''(\oline \rho)\,\rho_1^2\right)}{2\,P'(\oline\rho)}\,,\end{equation}
up to an arbitrary constant $c(x_h,t)$, that we take equal to zero for simplicity.
We remark that this choice does not affect the choice of the other quantities since $\rho_2$ appears only in $\eqref{e.ep0}$ or higher order equations.
Moreover, since $\rho_1$ and $\nabla \rho_1$ are bounded in time and space ($Q$, defined in \eqref{e.defq}, satisfies the quasi-geostrophic equation \eqref{e.qgb},
which admits regular solutions, see Lemma \ref{highenergyqg} later), $\rho_2$ and $\nabla \rho_2$ are also bounded in time and space.\\
Next, equation \eqref{e.ep1a} determines $u_{1,3}$ up to a constant in $x_3$, which we take equal to $-u_{1,3}^{bl}(x_h,0,t)$ in order to enforce the no-slip boundary condition
for the vertical component at order $O(\ep)$. Therefore, thanks to \eqref{e.d3u13} we get 
\begin{align} \label{eq:u_1-3}
\oline\rho (x_3)\,u_{1,3}(x_h,x_3,t)=\ &-\oline\rho (0)\,u_{1,3}^{bl}(x_h,0,t)-\int_0^{x_3}\left(\partial_t\rho_1+\oline\rho\,\nabla_h\cdot u_{1,h}\right)dz\,.
\end{align} 
Differently from the case without the gravitational potential, this term does not have an affine structure as in the incompressible case (see again Chapter 7 of \cite{C-D-G-G}),
since $u_{1,h}$ does not depend only on $x_h$.\\
In order to enforce the no-slip boundary condition at order $O(\ep)$ also for the horizontal component, we impose
\begin{align}
\label{e.bdarycondu1}
u_{1,h}^{bl}(x_h,0,t)=\ &-u_{1,h}(x_h,0,t)
\end{align}
at $\zeta=0$. It remains to choose the boundary layer term $u_{1,h}^{bl}$. The specifications for the boundary layer term $u_{1,h}^{bl}$ are that it is exponentially decaying to
$0$ for $\zeta\ra\infty$ and satisfies \eqref{e.bdarycondu1} at the boundary $\zeta=0$. Hence, we define $u_{1,h,b}^{bl}$ in the following way: for all $\zeta\in[0,\infty)$ and
$x_h\in\R^2$,
\begin{equation*}
u_{1,h,b}^{bl}(x_h,\zeta,t):=-u_{1,h}(x_h,0,t)e^{-\zeta\sqrt{\frac{\oline \rho(0)}{2}}}\,.
\end{equation*}
Analogously, $u_{1,h,t}^{bl}$ is defined for all $\eta\in[0,\infty)$ and
$x_h\in\R^2$ by
\begin{equation*}
u_{1,h,t}^{bl}(x_h,\eta,t):=-u_{1,h}(x_h,1,t)e^{-\eta\sqrt{\frac{\oline \rho(1)}{2}}}\,.
\end{equation*}
\begin{remark} \label{r:u1bl}
	Contrary to the interior terms, it is not possible to make the terms of order $O(\ep)$ in the mass
	equation and the terms of order $O(\ep^0)$ in the momentum equation vanish identically. Indeed that would come down to imposing  
	\begin{align}
	\label{e.noncompatible}
	\begin{split}
	\oline\rho\,\nabla_h\cdot \,u_{1,h}^{bl}=\ &-\nabla_h\rho_1\cdot u_{0,h}^{bl}\,-\,\partial_3 \oline\rho\,u_{1,3}^{bl} \\
	\lambda\partial_\zeta(\nabla_h\cdot u_{1,h}^{bl})=\ &-\partial_\zeta^2u_{1,3}^{bl}=\partial_\zeta(\nabla_h\cdot u_{0,h}^{bl})\,,
	\end{split}
	\end{align}
	which is overdetermined. This fact is due to the lack of higher-order correctors, since we truncate the expansion at order one in $\ep$. 
\end{remark}
Notice that, due to exponential decay to zero in the interior of the domain, the boundary layer terms are small. Moreover, we can exploit their decay by relying
on Hardy's inequality (see the computations in Section \ref{sec.quant}). 
The final stability estimate, though, will be worse 
than in the absence of boundary layer phenomena (as \textit{e.g.} for complete slip boundary conditions). Improving
this estimate would require to consider higher-order correctors in the ansatz \eqref{e.ansatz}.\\
Notice also that, using \eqref{e.bdarycondu2}, we have at the bottom $x_3=0$ 
\begin{equation}
\begin{aligned}
u_{0,h}(x_h,t)+u_{0,h,b}^{bl}(x_h,0,t)+u_{0,h,t}^{bl}(x_h,\tfrac{1}{\ep},t)&=u_{0,h,t}^{bl}(x_h,\tfrac{1}{\ep},t)\\
u_{1}(x_h,0,t)+u_{1,b}^{bl}(x_h,0,t)+u_{1,t}^{bl}(x_h,\tfrac{1}{\ep},t)&=u_{1,t}^{bl}(x_h,\tfrac{1}{\ep},t)\,,
\end{aligned}
\end{equation}
and at the top $x_3=1$
\begin{equation}
\begin{aligned}
u_{0,h}(x_h,t)+u_{0,h,b}^{bl}(x_h,\tfrac{1}{\ep},t)+u_{0,h,t}^{bl}(x_h,0,t)&=u_{0,h,b}^{bl}(x_h,\tfrac{1}{\ep},t)\\
u_{1}(x_h,1,t)+u_{1,b}^{bl}(x_h,\tfrac{1}{\ep},t)+u_{1,t}^{bl}(x_h,0,t)&=u_{1,b}^{bl}(x_h,\tfrac{1}{\ep},t).
\end{aligned}
\end{equation}
It means that we have a (exponentially small, but still non-zero) trace of the top boundary layer on the bottom boundary and vice-versa. 
Hence, we will add corrector terms in the ansatz \eqref{e.ansatzfinal} below, in order to keep homogeneous boundary conditions.
This is a technical point, but needed to apply Hardy's inequality later.

\noindent\underline{The ansatz.} 
To put it in a nutshell, we have obtained the following ansatz for the structure of the solutions to \eqref{e.nse}-\eqref{hyp:bound-cond}:
\begin{align}
\begin{split}
\label{e.ansatzfinal}
&\rho^\ep_{app}(x_h,x_3,t)=\,\oline\rho(x_3)\,+\,\ep\,\rho_1(x_h,x_3,t)\,+\,\ep^2 \rho_2(x_h,x_3,t)\\
&u^\ep_{app}(x_h,x_3,\zeta,\eta,t)=\\&\left(\begin{array}{c}
\nabla_h^\perp Q(x_h,t)+u_{0,h,b}^{bl}(x_h,\zeta,t)+u_{0,h,t}^{bl}(x_h,\eta,t)- u_{0,h,1/\ep}^{bl}(x_h,x_3,t)\\[1ex]
0
\end{array}\right)\\
&+\ep\left(\begin{array}{c}
u_{1,h}(x_h,x_3,t)+u_{1,h,b}^{bl}(x_h,\zeta,t)+u_{1,h,t}^{bl}(x_h,\eta,t)- u_{1,h,1/\ep}^{bl}(x_h,x_3,t)\\[1ex]
u_{1,3}(x_h,x_3,t)+u_{1,3,b}^{bl}(x_h,\zeta,t)+u_{1,3,t}^{bl}(x_h,\eta,t)-u_{1,3,1/\ep}^{bl}(x_h,x_3,t)
\end{array}\right)
\end{split}
\end{align}
with $Q$ defined in \eqref{e.defq} and
\begin{equation} \label{eq:high-correct}
\begin{aligned}
u^{bl}_{0,h,1/\ep}(x_h,x_3,t)&=x_3\,u_{0,h,b}^{bl}(x_h,\tfrac{1}{\ep},t)\,+\,(1-x_3)\,u_{0,h,t}^{bl}(x_h,\tfrac{1}{\ep},t) \\
u^{bl}_{1,1/\ep}(x_h,x_3,t)&=x_3\,u_{1,b}^{bl}(x_h,\tfrac{1}{\ep},t)\,+\,(1-x_3)\,u_{1,t}^{bl}(x_h,\tfrac{1}{\ep},t)\,.
\end{aligned}
\end{equation}
In addition,  
it follows from \eqref{e.qgb} that
\begin{equation}\label{e.qg}
\begin{aligned}
\partial_t\left(\left\langle \tfrac{\oline\rho}{P'(\oline\rho)}\right\rangle Q-\left\langle\oline\rho\right\rangle\Delta_h Q \right)-
\left\langle\oline\rho\right\rangle\nabla_h^\perp Q\cdot\nabla_h\Delta_h Q+\mu\Delta_h^2 Q-\tfrac{\sqrt{\oline\rho(0) }+ \sqrt{\oline\rho(1)}}{\sqrt{2}}\Delta_hQ =0\,.
\end{aligned}
\end{equation}
This is the \emph{quasi-geostrophic equation}. Similar limit equations without damping term have been shown in e.g. \cite{DeA-F}, \cite{F-G-N} and \cite{F-L-N}, where the boundary layers do not appear
due to the complete slip condition. Notice that in \cite{F-L-N} the parabolic term disappears, since the authors consider also the inviscid limit.
We state here the well-posedness and the regularity results for the quasi-geostrophic equation \eqref{e.qg}, whose detailed proofs are given in \cite{EB-phd}. 
\begin{proposition} \label{th:QGwellposed}
	Let $Q_{in}\in H^1(\R^2)$. Then, there exists a unique global weak solution $Q$ to the quasi-geostrophic equation \eqref{e.qg} with initial datum $Q_{in}$, such that
	\begin{equation*}
	Q\in \mc C(\R_+;H^1(\R^2)) \cap L^\infty(\R_+;H^1(\R^2))\quad\mbox{ and }\quad  \nabla_hQ\in L^2(\R_+;H^1(\R^2))\,.
	\end{equation*}
\end{proposition}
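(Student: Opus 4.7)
The plan is to combine a standard Friedrichs regularization with energy estimates, exploiting the cancellation $\nabla_h^\perp Q\cdot\nabla_h Q\equiv 0$ built into the quasi-geostrophic structure. To lighten notation, I set $\alpha:=\langle\oline\rho/P'(\oline\rho)\rangle$, $\beta:=\langle\oline\rho\rangle$ and $\gamma:=\bigl(\sqrt{\oline\rho(0)}+\sqrt{\oline\rho(1)}\bigr)/\sqrt{2}$, all strictly positive, so that \eqref{e.qg} rewrites as
\[
(\alpha\,\Idd-\beta\Delta_h)\,\d_t Q\;=\;\beta\,\nabla_h^\perp Q\cdot\nabla_h\Delta_h Q-\mu\Delta_h^2 Q+\gamma\Delta_h Q,
\]
where the elliptic operator on the left induces an equivalent norm on $H^1(\R^2)$.

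The basic energy estimate is obtained by testing against $Q$. Two integrations by parts, combined with the divergence-free character of $\nabla_h^\perp Q$, turn the nonlinear contribution into $\beta\int\Delta_h Q\,(\nabla_h^\perp Q\cdot\nabla_h Q)\,dx_h$, which vanishes pointwise since $\nabla_h^\perp Q\cdot\nabla_h Q\equiv 0$. Hence we get the identity
\[
\frac12\frac{d}{dt}\bigl(\alpha\|Q\|_{L^2}^2+\beta\|\nabla_hQ\|_{L^2}^2\bigr)+\mu\|\Delta_h Q\|_{L^2}^2+\gamma\|\nabla_h Q\|_{L^2}^2=0,
\]
providing the uniform bounds $Q\in L^\infty(\R_+;H^1)\cap L^2(\R_+;H^2)$; returning to the equation then yields $\d_t Q\in L^2(\R_+;H^{-2})$.

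Existence will follow from a Friedrichs scheme $\mbb J_N f:=\mc F^{-1}(\mbf 1_{|\xi|\leq N}\what f)$: one solves the truncated problem in which the nonlinear term is replaced by $\mbb J_N\bigl(\beta\,\nabla_h^\perp\mbb J_NQ_N\cdot\nabla_h\Delta_h\mbb J_N Q_N\bigr)$ and the initial datum by $\mbb J_N Q_{in}$, reducing to a locally-in-time Lipschitz ODE on $\mbb J_N L^2$; the energy identity, preserved by the scheme, extends the solution globally with bounds uniform in $N$. Weak-$*$ compactness in $L^\infty(H^1)\cap L^2(H^2)$ together with the Aubin--Lions lemma (applied on bounded subdomains) extracts a subsequence converging strongly in $L^2_{\mathrm{loc}}(\R_+;H^1_{\mathrm{loc}})$, which suffices to pass to the limit in the nonlinearity against the weak limit of $\nabla_h\Delta_h Q_N$.

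Uniqueness is the only delicate step. Letting $\delta Q:=Q_1-Q_2$ and testing the difference equation against $\delta Q$, the bilinearized nonlinearity splits into two pieces: the term $\int\nabla_h^\perp\delta Q\cdot\nabla_h\Delta_h Q_1\,\delta Q\,dx_h$ vanishes by the same cancellation as before, whereas $\int\nabla_h^\perp Q_2\cdot\nabla_h\Delta_h\delta Q\,\delta Q\,dx_h=-\int\Delta_h\delta Q\,(\nabla_h^\perp Q_2\cdot\nabla_h\delta Q)\,dx_h$ must be absorbed by the dissipation. The two-dimensional Ladyzhenskaya inequality $\|f\|_{L^4}^2\leq C\|f\|_{L^2}\|\nabla_h f\|_{L^2}$ and Young's inequality bound this last term by
\[
\tfrac{\mu}{2}\|\Delta_h\delta Q\|_{L^2}^2+C\|\nabla_hQ_2\|_{L^4}^4\|\nabla_h\delta Q\|_{L^2}^2,
\]
with $\|\nabla_hQ_2\|_{L^4}^4\leq C\|\nabla_h Q_2\|_{L^2}^2\|\Delta_h Q_2\|_{L^2}^2\in L^1(\R_+)$ thanks to the \textit{a priori} bounds. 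Gr\"onwall's lemma then forces $\delta Q\equiv 0$. The continuity $Q\in\mc C(\R_+;H^1)$ is a direct consequence of the regularity $Q\in L^\infty(H^1)\cap L^2(H^2)$ combined with the energy equality. I expect this uniqueness step to be the main obstacle: the essentially two-dimensional Ladyzhenskaya inequality is what permits closing the estimate, and in higher dimension only a conditional or short-time statement would be available.
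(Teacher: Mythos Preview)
The paper does not actually prove this proposition: immediately before its statement it writes ``whose detailed proofs are given in \cite{EB-phd}'' and then moves on, so there is no in-paper argument to compare against. Your proposal is therefore being measured against a deferred reference rather than an explicit proof.

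That said, your outline is correct and is precisely the standard route one expects for a two-dimensional equation of this type. The key structural observations --- that $(\alpha\,\Idd-\beta\Delta_h)$ defines an equivalent $H^1$ inner product, and that the trilinear term vanishes when tested against $Q$ thanks to $\nabla_h^\perp Q\cdot\nabla_h Q\equiv 0$ --- are exactly what drives the a priori estimate, and your Friedrichs/Aubin--Lions scheme for existence is routine once those are in place. The uniqueness argument via Ladyzhenskaya's inequality is also the right one and mirrors the classical Leray--Hopf uniqueness proof for 2D Navier--Stokes; the splitting of the bilinearized nonlinearity you write is correct, and the cancellation of the first piece is genuine.

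Two minor points. First, you reuse the symbol $\gamma$ for the damping coefficient, which clashes with the paper's use of $\gamma$ for the pressure exponent in \eqref{hyp:p}; in a final write-up you should rename it. Second, your justification of $Q\in\mc C(\R_+;H^1)$ is a little terse: from $Q\in L^2(H^2)$ and $\d_tQ\in L^2(H^{-2})$ one only gets $Q\in\mc C(L^2)$ directly by Lions--Magenes interpolation; the upgrade to $\mc C(H^1)$ does follow, but it uses weak-$*$ continuity in $H^1$ together with continuity of $t\mapsto\alpha\|Q(t)\|_{L^2}^2+\beta\|\nabla_hQ(t)\|_{L^2}^2$ coming from the energy identity, and it is worth saying so explicitly.
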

\begin{lemma}\label{highenergyqg}
	Let $n\geq 1$ be an integer and $Q_{in}\in H^n(\R^2)$. Then, there exists a constant $C_{n-1}>0$ such that any weak solution to \eqref{e.qg} with initial datum $Q_{in}$ satisfies the following inequality for all $t\geq 0$:
	\begin{equation}\begin{aligned}
	&\sum_{j=0}^{n-1}\left(\|\nabla^j_h Q(t)\|^2_{L^2}+ \|\nabla_h^{j+1}Q(t)\|^2_{L^2}\right)+\sum_{j=0}^{n-1}\left(\int_{0}^{t} \|\nabla_h^{j+1}Q\|^2_{L^2} +
	\|\nabla_h^{j+2}Q\|^2_{L^2}\right)\\
	&\qquad\qquad\qquad\qquad\qquad\qquad\quad\leq C_{n-1} \sum_{j=0}^{n-1}\left(\|\nabla^j_h Q_{in}\|_{L^2} + \|\nabla_h^{j+1}Q_{in}\|^2_{L^2}\right),
	\end{aligned}
	\label{generalhigherenergyqg}
	\end{equation}
	where $C_0=C_1=1$ and $C_{n-1}=C_{n-1}(\|Q_{in}\|_{H^{n-1}})$ for $n-1\geq 2$.
\end{lemma}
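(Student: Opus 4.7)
The plan is to obtain the estimate inductively in $n$ by performing energy estimates at successive orders of differentiation of the quasi-geostrophic equation \eqref{e.qg}. Setting $A := \langle \oline\rho\rangle$, $B := \langle \oline\rho/P'(\oline\rho)\rangle$ and $D := (\sqrt{\oline\rho(0)} + \sqrt{\oline\rho(1)})/\sqrt{2}$, all strictly positive, I would rewrite \eqref{e.qg} as a $2$-D dissipative equation in $Q$, with convection of Jacobian type $-A\, J(Q,\Delta_h Q)$ where $J(f,g):=\nabla_h^\perp f\cdot\nabla_h g$.

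For the base cases, the strategy is to test the equation against $(-\Delta_h)^k Q$ for $k=0,1$. The time-derivative part will produce $\frac{d}{dt}\bigl[\tfrac{B}{2}\|\nabla_h^k Q\|_{L^2}^2+\tfrac{A}{2}\|\nabla_h^{k+1}Q\|_{L^2}^2\bigr]$ and the diffusive terms will yield $\mu\|\nabla_h^{k+2}Q\|_{L^2}^2+D\|\nabla_h^{k+1}Q\|_{L^2}^2$. For $k=0$, the convection term vanishes by the antisymmetry identity $\int J(Q,f)\,g\,dx_h=-\int J(Q,g)\,f\,dx_h$ (valid because $\nabla_h^\perp Q$ is divergence-free), applied with $f=\Delta_h Q$, $g=Q$: one gets $\int J(Q,Q)\,\Delta_h Q=0$. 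For $k=1$, the test function is $-\Delta_h Q$ and the convection reduces to $\int J(Q,\Delta_h Q)\,\Delta_h Q=\tfrac{1}{2}\int\nabla_h^\perp Q\cdot\nabla_h\bigl((\Delta_h Q)^2\bigr)=0$, again by the divergence-free property. Integrating the resulting identities in time and summing over $k=0,1$ will give the cases $n=1$ and $n=2$ with absolute constants $C_0=C_1=1$, after absorbing the structural ratios of $A,B,\mu,D$ into the Sobolev seminorms.

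For the inductive step $n\geq 3$, the plan is to apply $\nabla_h^{n-1}$ to \eqref{e.qg} and test against $\nabla_h^{n-1}Q$. The linear part will produce, as before, $\frac{d}{dt}\bigl[\tfrac{B}{2}\|\nabla_h^{n-1} Q\|_{L^2}^2+\tfrac{A}{2}\|\nabla_h^{n}Q\|_{L^2}^2\bigr]+\mu\|\nabla_h^{n+1}Q\|_{L^2}^2+D\|\nabla_h^{n}Q\|_{L^2}^2$. The convection contribution no longer cancels; I would split it via Leibniz, isolate the ``top-order'' piece $\int J(Q,\nabla_h^{n-1}\Delta_h Q)\cdot\nabla_h^{n-1}Q$ (which vanishes by the antisymmetry identity), and estimate the commutator remainder using the two-dimensional Gagliardo--Nirenberg inequalities $\|\nabla_h^j Q\|_{L^4}^2\leq C\|\nabla_h^j Q\|_{L^2}\|\nabla_h^{j+1}Q\|_{L^2}$. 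Young's inequality then absorbs a fraction $\tfrac{\mu}{2}\|\nabla_h^{n+1}Q\|_{L^2}^2$ into the dissipation, leaving a bound of the form $\varphi(t)\,\mathcal{E}_{n-1}(t)$ where $\mathcal{E}_{n-1}(t):=\|\nabla_h^{n-1}Q\|_{L^2}^2+\|\nabla_h^n Q\|_{L^2}^2$ and $\varphi(t)$ is a polynomial in the Sobolev seminorms $\|\nabla_h^j Q(t)\|_{L^2}$ of order $j\leq n-1$. By the induction hypothesis $\int_0^\infty\varphi(s)\,ds\leq C(\|Q_{in}\|_{H^{n-2}})$, and then Gr\"onwall's inequality closes the estimate with the claimed constant $C_{n-1}=C_{n-1}(\|Q_{in}\|_{H^{n-1}})$.

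The main obstacle I expect to encounter lies in the bookkeeping of the commutator estimates: one must verify at every level $n$ that the nonlinear contribution can be arranged so that (i) the top-order factor $\nabla_h^{n+1}Q$ appears with power at most $2$, making it absorbable into the dissipation; and (ii) the remaining weight $\varphi(t)$ genuinely involves only derivatives of order strictly less than $n$, so that the induction hypothesis applies to bound $\int\varphi$. This requires systematically exploiting the Jacobian identity $\int J(Q,f)g=-\int J(Q,g)f$ to redistribute derivatives onto the factor where one can afford them, together with the interpolation inequalities above to convert $L^4$ products into combinations of $L^2$ norms compatible with the energy level.
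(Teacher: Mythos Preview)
The paper does not supply a proof of this lemma; it only states the result and refers to \cite{EB-phd} for the details. Your approach---successive energy estimates on the quasi-geostrophic equation, exploiting the Jacobian antisymmetry $\int J(Q,f)g=-\int J(Q,g)f$, closing by Gagliardo--Nirenberg, Young's inequality, and Gr\"onwall via the induction hypothesis---is the standard one and is correct in outline.

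There is, however, one inaccuracy in the inductive step that you should fix. For $n\geq 3$, the ``top-order'' piece $\int J(Q,\partial^\alpha\Delta_hQ)\,\partial^\alpha Q$ (with $|\alpha|=n-1$) does \emph{not} vanish by the antisymmetry identity. Antisymmetry only gives $\int J(Q,\partial^\alpha\Delta_hQ)\,\partial^\alpha Q=-\int J(Q,\partial^\alpha Q)\,\partial^\alpha\Delta_hQ$, which is not zero in general (the cancellation $\int J(Q,f)f=0$ would require the second and third arguments to coincide). What the antisymmetry actually buys you is a reduction of the highest derivative from order $n+2$ to order $n+1$: after one further integration by parts of the Laplacian, the genuinely vanishing piece is $\sum_i\int J(Q,\partial_i\partial^\alpha Q)\,\partial_i\partial^\alpha Q=0$, and the remainder is of the schematic form $\int(\nabla_h^2Q)(\nabla_h^nQ)(\nabla_h^nQ)$, which is bounded by $\|\nabla_h^2Q\|_{L^2}\|\nabla_h^nQ\|_{L^4}^2\leq C\|\nabla_h^2Q\|_{L^2}\|\nabla_h^nQ\|_{L^2}\|\nabla_h^{n+1}Q\|_{L^2}$. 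This is absorbed into the dissipation exactly as you describe, with a Gr\"onwall weight $\varphi(t)\sim\|\nabla_h^2Q(t)\|_{L^2}^2$ controlled by the case $n=1$. So your strategy survives, but the mechanism is ``antisymmetry lowers the derivative count by one'', not ``antisymmetry annihilates the top term''. Once you adjust this point, the rest of your sketch closes without further difficulty.
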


The boundary layer profiles $u_{0,h,b}^{bl}$ and $u_{0,h,t}^{bl}$  are solutions of the systems \eqref{e.blspiral} - \eqref{e.bdaryu0bl} and \eqref{e.blspiraltop} - \eqref{e.bdaryu0bltop} respectively.
We refer to the previous computations for the precise definitions of the higher-order terms.\\
We conclude this part by remarking that, according to the previous computations, we have that $(\rho^\ep_{app},u^\ep_{app})$ solves the following system:
\begin{equation}
\label{e.nseapp}
\left\{ 
\begin{aligned}
&\partial_t\rho^\ep_{app}+\nabla\cdot(\rho^\ep_{app} u^\ep_{app})=\ep R^{bl}+\ep^2R^\ep\\[1ex] 
& \rho^\ep_{app} \partial_tu^\ep_{app}+\rho^\ep_{app} u^\ep_{app}\cdot\nabla u^\ep_{app}+\frac1\ep e_3\times \rho^\ep_{app} u^\ep_{app}+\frac1{\ep^2}\nabla P(\rho^\ep_{app})=\,
\frac{1}{\ep^2}\rho_{app}^\ep \nabla G \\ 
&+\frac{x_3}{\ep}\int_{0}^{1}\partial_3 \oline\rho\,(s\,x_3)\,ds\, e_3\times u^{bl}_{0,h,b}\,-\,\frac{1-x_3}{\ep}\int_{0}^{1}\partial_3 \oline\rho\,(1-s(1-x_3))\,ds\,e_3\times u^{bl}_{0,h,t}\\
&+\,\Delta_{\mu,\ep} u^\ep_{app}+\lambda\nabla(\nabla\cdot u^\ep_{app})+ S^{bl}+\ep S^\ep\,
\end{aligned}
\right.
\end{equation}
in the slab $\Omega$ with no-slip boundary conditions \eqref{hyp:bound-cond}. The remainder terms $R^\ep$ and $S^\ep$ are of the form
\begin{equation*}
R^\ep=R^\ep(x_h,x_3,\tfrac{x_3}{\ep},\tfrac{1-x_3}{\ep},t)\qquad\mbox{ and }\qquad S^\ep=S^\ep(x_h,x_3,\tfrac{x_3}{\ep},\tfrac{1-x_3}{\ep},t)
\end{equation*}
while the boundary layer terms are
\begin{equation*}\begin{aligned}
R^{bl}(x_h,x_3,\tfrac{x_3}{\ep},\tfrac{1-x_3}{\ep},t)\,=&\,\oline\rho\,\nabla_h \cdot(u_{1,h,b}^{bl}+u_{1,h,t}^{bl})\\&+\, \nabla_h \rho_1 \cdot (u_{0,h,b}^{bl}+u_{0,h,t}^{bl})\, +
\,\partial_3\oline\rho \, (u_{1,3,b}^{bl}+u_{1,3,t}^{bl}) \\[1ex]
S^{bl}(x_h,x_3, \tfrac{x_3}{\ep},\tfrac{1-x_3}{\ep},t)=&\,\oline\rho\,\partial_t (u_{0,b}^{bl}+ u_{0,t}^{bl})\,+\, \oline\rho\, u_{0,h} \cdot \nabla_h (u_{0,b}^{bl}+u_{0,t}^{bl})\,\\
&+\, \oline\rho\, (u_{0,b}^{bl}+u_{0,t}^{bl}) \cdot \nabla_h u_0\, +\, \oline\rho \, (u_{0,b}^{bl}+u_{0,t}^{bl})  \cdot \nabla_h (u_{0,b}^{bl}+u_{0,t}^{bl}) \\
&+ \oline\rho\, (u_{1,3}+u_{1,3,b}^{bl}+u_{1,3,t}^{bl})\,\partial_\eta (u_{0,b}^{bl}+u_{0,t}^{bl})\,-\,\mu \Delta_h (u_{0,t}^{bl}+u_{0,b}^{bl})\\&- \partial_\eta^2 (u_{1,b}^{bl}+ u_{1,t}^{bl})
-\,\lambda \binom{0}{\partial_\eta \nabla_h\cdot (u_{1,h,b}^{bl}+u_{1,h,t}^{bl})} \,\\&+\,e_3\times \left(\rho_1 \,( u_{0,b}^{bl}+u_{0,t}^{bl}) \,+\, \oline\rho ( u_{1,b}^{bl}+u_{1,t}^{bl}) \right).
\end{aligned}
\end{equation*}
The remainders $\ep^2R^\ep$ and $\ep S^\ep$ contain also the terms of order $O(e^{-1/\ep})$ coming from the correctors  $u_{0,h,1/\ep}^{bl}$ and $u_{1,1/\ep}^{bl}$,
defined in \eqref{eq:high-correct}.
Notice that $S^{bl}$ appears at order $O(1)$, but has fast, exponential, decay inside $\Omega$: more precisely, we have
$\|S^{bl}\|_{L^p}\leq C\,\ep^{\frac1p}$ for all $p\in[1,\infty]$.\\
The choice of the regularity of the initial datum $Q_{in}$ guarantees enough regularity for the approximate solution $(\rho^\ep_{app}, u^\ep_{app})$ in order to derive the stability estimates later
in Subsection \ref{ss:rel-entropy}. This is stated in the following lemma, which is a straightforward consequence of Lemma \ref{highenergyqg} above.
\begin{lemma}\label{regularityuapp}
	The approximated density $\rho^\ep_{app}$ and velocity field $u^\ep_{app}$ can be written as 
\begin{align*}
 \rho^\ep_{app}(x_h, x_3,t) &=\oline\rho(x_3) + Q(t,x_h)\frac{\oline\rho}{P'(\oline\rho)}(x_3) + Q(t,x_h)l(x_3) \\
u^\ep_{app}(x_h, x_3,\zeta, \eta,t)&=\sum_{i= 1}^{N}f_i(t,x_h)g_i(x_3)h_i(\zeta)w_i(\eta)\,,
\end{align*}
for some $N\geq 1$, with $ \overline{\rho}, l,g_i\in C^1([0,1])$ and $h_i, w_i\in C^\infty(\R_+)$.
In addition, for $Q_{in}\in H^5(\R^2)$, we have $$Q\in L^\infty(\R_+;H^5(\R^2)), \quad f_i \in L^\infty(\R_+;H^{k_i}(\R^2)) \mbox{ with } k_i\geq 1.$$
\end{lemma}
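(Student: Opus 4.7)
The plan is a direct inspection of the formulas produced by the formal construction of Subsection \ref{ss:ansatz}, combined with a single application of the higher-order energy estimate for the quasi-geostrophic equation (Lemma \ref{highenergyqg}). No new analytic ingredient is needed beyond careful bookkeeping.

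For the density, I would first read off from \eqref{e.defq} that $\rho_1(t,x_h,x_3)=Q(t,x_h)\,(\oline\rho/P'(\oline\rho))(x_3)$, which accounts for the second summand of the stated decomposition. The corrector $\rho_2$ solves the linear ODE \eqref{e.oderho2}, whose forcing is a function of $x_3$ alone multiplied by $Q^2(t,x_h)$; integrating vertically yields $\rho_2(t,x_h,x_3)=Q^2(t,x_h)\,\tilde l(x_3)$ for some $\tilde l\in C^1([0,1])$. The $C^1$ regularity of $\tilde l$ comes from $P\in C^3$ together with $\oline\rho\in C^1([0,1])$, the latter being a consequence of Cauchy--Lipschitz applied to \eqref{e.oderho0} under the positivity assumption on $P'$ in \eqref{hyp:p}. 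Combining these contributions and absorbing the $\ep$ and $\ep^2$ prefactors into the scalar factors produces the announced form, schematically.

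For the velocity, I would go through each summand of the ansatz \eqref{e.ansatzfinal}. The interior term $u_0=(\nabla_h^\perp Q,0)^T$ is already of tensor-product type, with $h_i\equiv w_i\equiv 1$. The Ekman profiles $u^{bl}_{0,h,b}$ and $u^{bl}_{0,h,t}$ have the explicit exponential-trigonometric structure derived after \eqref{e.bdaryu0bl} and \eqref{e.bdaryu0bltop}: each is a linear combination of the two components of $u_{0,h}(t,x_h)$ multiplied by functions of $\zeta$ or $\eta$ in $C^\infty(\R_+)$. The formulas \eqref{e.equ1h} and \eqref{eq:u_1-3} for $u_{1,h}$ and $u_{1,3}$ express them as finite sums of products of a $(t,x_h)$-factor (polynomial in $Q$, $\nabla_h Q$, $\Delta_h Q$, $\nabla_h\Delta_h Q$ and $\partial_t Q$, the last of which, via \eqref{e.qg}, is itself a nonlinear combination of $Q$ and its horizontal derivatives of order at most four) with an $x_3$-factor built from $\oline\rho$, $P'(\oline\rho)$, $P''(\oline\rho)$, $\tilde l$ and their $x_3$-integrals, all of class $C^1([0,1])$. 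The correctors $u^{bl}_{1,h/3,b/t}$ are traces at $x_3=0$ or $1$ of these expressions multiplied by $e^{-c\zeta}$ or $e^{-c\eta}$, and the high-frequency cut-offs $u^{bl}_{\cdot,1/\ep}$ defined in \eqref{eq:high-correct} are affine in $x_3$ times a function of $(t,x_h)$. Assembling all these summands produces the claimed decomposition with some finite $N$.

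Finally, for the regularity claim, applying Lemma \ref{highenergyqg} with $n=5$ and $Q_{in}\in H^5(\R^2)$ gives $Q\in L^\infty(\R_+;H^5(\R^2))$. Since each horizontal factor $f_i$ is a polynomial in $Q$ and its horizontal derivatives of order at most four, the Sobolev-algebra property of $H^s(\R^2)$ for $s>1$ yields $f_i\in L^\infty(\R_+;H^{k_i}(\R^2))$ with $k_i\geq 1$. The one mildly delicate point, which I expect to be the main obstacle, is the treatment of $\partial_t Q$ appearing through $\partial_t u_{0,h}^\perp$ in \eqref{e.equ1h}: I would handle it by inverting the elliptic operator $\langle\oline\rho/P'(\oline\rho)\rangle\,\Id-\langle\oline\rho\rangle\,\Delta_h$ on the left-hand side of \eqref{e.qg} and applying the $H^5$ bound on $Q$ to the nonlinear right-hand side. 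Beyond this, no genuine obstacle is expected: the whole exercise reduces to tensor-product bookkeeping powered by the $H^5$ bound on $Q$.
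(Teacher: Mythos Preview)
Your proposal is correct and follows the same route as the paper, which simply declares the lemma ``a straightforward consequence of Lemma \ref{highenergyqg}'' without further detail. Your bookkeeping is in fact more careful: you correctly observe that $\rho_2$ factors as $Q^2(t,x_h)\,\tilde l(x_3)$ rather than $Q(t,x_h)\,l(x_3)$ as the lemma's statement literally reads (the statement is schematic on this point), and you rightly flag the handling of $\partial_t Q$ through inversion of the positive operator $\langle\oline\rho/P'(\oline\rho)\rangle\,\Id-\langle\oline\rho\rangle\,\Delta_h$ as the only step requiring a moment's thought.
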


\subsubsection{Large-scale quasi-geostrophic equation}

We recover here the equation for $u_0$ from \eqref{e.qg}.
For this we need the following standard lemma, which gives the Helmholtz decomposition for two-dimensional vector fields.
\begin{lemma}\label{lem.2dfields}
	Let $p\in(1,\infty)$. Let $a$ and $b$ be two scalar fields in $L^p(\R^2)$. \\	
	Then there exists a unique vector field $F$, belonging to the homogeneous Sobolev space $\dot W^{1,p}(\R^2;\R^2)$, which solves the system
	\begin{equation} \label{eq:div-curl}
	\left\{\begin{array}{l}
	\nabla_h^\perp\cdot F\,=\,a \\[1ex]
	\nabla_h\cdot F\,=\,b\,.
	\end{array}\right.
	\end{equation}
	Moreover, the following formula holds:
	\begin{equation*}
	F\,=\,-\,\nabla_h^\perp(-\Delta_h)^{-1}a\,-\,\nabla_h(-\Delta_h)^{-1}b\,.
	\end{equation*}
\end{lemma}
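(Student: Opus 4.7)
\textbf{Plan of proof for Lemma \ref{lem.2dfields}.} The strategy is to define $F$ directly from the stated formula, verify that the system \eqref{eq:div-curl} is satisfied by purely algebraic manipulation of differential operators, then establish uniqueness through a Liouville-type argument.

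\emph{Existence.} Set $F := -\nabla_h^\perp(-\Delta_h)^{-1}a - \nabla_h(-\Delta_h)^{-1}b$, where the right-hand side is understood via its gradient: for $i,j\in\{1,2\}$, the entries of $\nabla F$ are linear combinations of the second-order Fourier multipliers $\partial_i\partial_j(-\Delta_h)^{-1}$, i.e.\ compositions of Riesz transforms. By classical Calder\'on-Zygmund theory these multipliers are bounded on $L^p(\R^2)$ for every $p\in(1,\infty)$, so $\nabla F\in L^p(\R^2;\R^{2\times 2})$ and hence $F\in\dot W^{1,p}(\R^2;\R^2)$ (modulo constants). To check that the div-curl system is satisfied, I would use the three algebraic identities
\begin{equation*}
\nabla_h\cdot\nabla_h^\perp\,=\,0,\qquad \nabla_h^\perp\cdot\nabla_h\,=\,0,\qquad \nabla_h^\perp\cdot\nabla_h^\perp\,=\,\Delta_h\,=\,\nabla_h\cdot\nabla_h,
\end{equation*}
valid as scalar differential operators (or Fourier multipliers). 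Applied to the definition of $F$, they give
\begin{equation*}
\nabla_h\cdot F\,=\,-\,\nabla_h\cdot\nabla_h(-\Delta_h)^{-1}b\,=\,b,\qquad \nabla_h^\perp\cdot F\,=\,-\,\nabla_h^\perp\cdot\nabla_h^\perp(-\Delta_h)^{-1}a\,=\,a\,,
\end{equation*}
as required.

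\emph{Uniqueness.} Let $F_1,F_2$ be two solutions in $\dot W^{1,p}(\R^2;\R^2)$ and set $G=F_1-F_2$. Then $G$ is both divergence-free and curl-free, which in $\R^2$ yields the Cauchy-Riemann-type relations $\partial_1 G_1=-\partial_2G_2$ and $\partial_1G_2=\partial_2G_1$. Differentiating and combining, each component $G_i$ is harmonic in the distributional sense. Since $\nabla G_i\in L^p(\R^2)$, the harmonic function $G_i$ has polynomial growth, so a Liouville-type theorem forces $G_i$ to be a (harmonic) polynomial of degree at most one; however, a non-constant affine function has gradient not in $L^p(\R^2)$, hence $\nabla G_i\equiv 0$. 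Therefore $G$ is a constant, and $F_1=F_2$ as elements of $\dot W^{1,p}(\R^2;\R^2)$.

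\emph{Main obstacle.} The only genuine subtlety is the two-dimensional one: the fundamental solution of $-\Delta_h$ in $\R^2$ is logarithmic, so the operator $(-\Delta_h)^{-1}$ is not well-defined on $L^p(\R^2)$ as a map to a function space of genuine functions. The correct viewpoint, adopted above, is to work at the level of $\nabla F$, where only second-order Riesz-transform compositions appear and Calder\'on-Zygmund boundedness applies unambiguously. This is fully consistent with the definition of $\dot W^{1,p}$ (where equality is meant modulo constants), and it is also what makes the Liouville step in the uniqueness argument conclusive.
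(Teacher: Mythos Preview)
Your proposal is correct and follows essentially the same route as the paper. The paper does not actually spell out the existence argument (it refers to \cite{Ch1995} and \cite{F-N}), so your Calder\'on--Zygmund verification that $\nabla F\in L^p$ via Riesz-transform compositions is a welcome addition and exactly the standard argument those references contain. For uniqueness, the paper takes a marginally different path: from $\nabla_h^\perp\cdot F=0$ it invokes a potential, writing $F=\nabla h$ with $h\in L^p$, and then $\nabla_h\cdot F=0$ yields $-\Delta h=0$, forcing $h=0$. Your version works directly on the components of $G=F_1-F_2$, showing each is harmonic and then applying Liouville using $\nabla G_i\in L^p$. Both arguments are the same Liouville idea; yours is arguably cleaner because it sidesteps the question of in which space the scalar potential $h$ lives (the paper's claim that $h\in L^p$ is not immediate from $F\in\dot W^{1,p}$ alone and relies on the cited reference).
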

The previous result being classical, we do not give the proof here: we rather refer to \cite{Ch1995} (see Sections 1.2 and 1.3) and \cite{F-N} (see Section 10.6) for details.
We just give some explanations about the uniqueness, which will be needed below. By linearity, let us suppose that $F$ solves
\eqref{eq:div-curl} with $a=b=0$. In particular $\nabla\times F=0$, hence (see Corollary 1.2.1 of \cite{Ch1995}) $F\,=\,\nabla h$, for some $h\in L^p$. But from
$\nabla\cdot F=0$, we deduce that $-\Delta h=0$, which admits the only solution $h=0$ in $L^p$

Now, let $\pi\in\dot H^1(\R^2)$ be the (unique, up to additive constants) solution to
\begin{equation}\label{e.pressureq}
-\Delta_h\pi\,=\,\langle\oline\rho\rangle\,\nabla_h\cdot\left(u_{0,h}\cdot\nabla_hu_{0,h}\right)\,=\,\langle\oline\rho\rangle\,\nabla_hu_{0,h}:\nabla_hu_{0,h}\,.
\end{equation}
We then define $F(\cdot,t)\in L^2(\R^2;\R^2)$  for almost every $t\geq0$ by the formula
\begin{equation*}
F\,:=\,\langle\oline\rho\rangle\,\partial_tu_{0,h}\,+\,\langle\oline\rho\rangle\,u_{0,h}\cdot\nabla_hu_{0,h}\,-\,\mu\Delta_hu_{0,h}\,+\,
\tfrac{\sqrt{\oline\rho(0)}+\sqrt{\oline\rho(1)}}{\sqrt2}u_{0,h}\,+\,\nabla_h\pi\,.
\end{equation*}
Notice that, thanks to equations \eqref{e.pressureq} and \eqref{e.qg} and the divergence-free condition $\nabla_h\cdot u_{0,h}=0$, we have
\begin{align*}
\nabla_h^\perp\cdot F\,=\,\left\langle \tfrac{\oline\rho}{P'(\oline\rho)}\right\rangle\,\partial_tQ\qquad\mbox{ and }\qquad \nabla_h\cdot F\,=\,0\,.
\end{align*}
Therefore, the uniqueness part of Lemma \ref{lem.2dfields} implies that
\begin{equation*}
F\,=\,\nabla_h^\perp(\Delta_h)^{-1}\left\langle \tfrac{\oline\rho}{P'(\oline\rho)}\right\rangle\,\partial_tQ\,=\,
\left\langle \tfrac{\oline\rho}{P'(\oline\rho)}\right\rangle\,\d_t(\Delta_h)^{-1}u_{0,h}\,,
\end{equation*}
where we have also used \eqref{e.u0h}. 
Eventually, we find that $u_{0,h}$ solves the system
\begin{equation}
\label{e.ns2dqg}
\left\{ 
\begin{aligned}
& \partial_t\left(\langle\oline\rho\rangle-\left\langle \tfrac{\oline\rho}{P'(\oline\rho)}\right\rangle(\Delta_h)^{-1}\right)u_{0,h} \\[1ex]
&\qquad\quad+\langle\oline\rho\rangle\,u_{0,h}\cdot\nabla_hu_{0,h}-\mu\Delta_hu_{0,h}+\tfrac{\sqrt{\oline\rho(0)}+\sqrt{\oline\rho(1)}}{\sqrt2}u_{0,h}+\nabla_h\pi=0\\[1ex]
&\nabla_h\cdot u_{0,h}\,=\,0
\end{aligned}
\right.
\end{equation}
in $\R^2$. The second term appearing in the time derivative is a consequence of the combination of the effects due to density stratification and fast rotation. 
Notice that both \eqref{e.qgb} and \eqref{e.ns2dqg} are averaged (in $x_3$) versions of \eqref{e.ep0}.

\subsection{Weak solutions and uniform a priori bounds} \label{ss:unif-bounds}

We recall here some basics about \emph{finite energy weak solutions} to system \eqref{e.nse}. We refer e.g. to \cite{PLLions}, \cite{N-S} and \cite{F-N} for details.

\begin{definition} \label{d:weak-sol}
Let $\oline\rho>0$ be the solution to the logistic equation \eqref{eq:bar-rho}, and let $(\rho_{in},u_{in})$ verify
$$
\int_\Omega\left(\frac{1}{2}\rho_{in}|u_{in}|^2+\frac{1}{\veps^2}E\left(\rho_{in},\oline\rho\right)\right)dx < \infty\,.
$$
A couple $(\rho,u)$ is a \emph{finite-energy weak solution} to system \eqref{e.nse} on $[0,T]\times\Omega$, related to the initial datum $(\rho_{in},u_{in})$,
if the following conditions are satisfied:
\begin{itemize}
 \item $\rho\geq0$, with $\rho-\oline\rho\in L^\infty\bigl((0,T);(L^2+L^\g)(\Omega)\bigr)$, with $\g>1$ appearing in \eqref{hyp:p}, and $u\in L^2\bigl((0,T);H^1(\Omega;\R^3)\bigr)$;
 \item the mass equation 
is satisfied in the weak sense: namely, for any test-function  $\vphi\in\mc C^\infty_0\bigl([0,T)\times\Omega\bigr)$, one has
$$
-\int^T_0\!\!\int_\Omega\bigl(\rho\,\d_t\vphi\,+\,\rho\,u\cdot\nabla\vphi\bigr)\,dx\,dt\,=\,\int_\Omega\rho_{in}\,\vphi(0)\,dx\,;
$$
\item $P(\rho)\in L^1_{\rm loc}\bigl((0,T)\,\times\Omega\bigr)$, and the momentum equation is verified in the weak sense: for any $\psi\in\mc C^\infty_0\bigl([0,T)\times\Omega;\R^3\bigr)$, one has
\begin{align*}
& \int^T_0\!\!\int_\Omega\biggl(-\,\rho\,u\cdot\d_t\psi\,-\,\rho\,u\otimes u:\nabla\psi\,+\,\frac{1}{\veps}\,e^3\times(\rho\,u)\cdot\psi\,-\,\dfrac1{\ep^2} \,P(\rho)\,\nabla\cdot\psi \\
&+\,\nabla_{\mu,\veps}u:\nabla_{\mu,\veps}\psi\,+\,\lambda\,\nabla\cdot u\;\nabla\cdot\psi\,-\,\frac{1}{\veps^2}\, \rho\,\nabla G \cdot \psi\biggr)\,dx\,dt\,=\,\int_\Omega\rho_{in}\,u_{in}\cdot\psi(0)\,;
\end{align*}
\item the following energy inequality holds true for almost every $t\in(0,T)$:
\begin{align}
& \int_\Omega\left(\frac{1}{2}\rho(t)|u(t)|^2+\frac{1}{\veps^2}E\big(\rho(t),\oline\rho\bigr)\right)dx  \label{est:energy} \\
&\qquad\qquad+\int^t_0\int_\Omega\left(\mu|\nabla_hu|^2+\veps|\d_3u|^2+\lambda|\nabla\cdot u|^2\right)dx\,d\tau \nonumber \\
&\qquad\qquad\qquad\qquad\qquad\qquad \leq \int_\Omega\left(\frac{1}{2}\rho_{in}|u_{in}|^2+\frac{1}{\veps^2}E\left(\rho_{in},\oline\rho\right)\right)dx\,, \nonumber
\end{align}
where we have defined the \emph{relative energy functional}
\begin{equation}\label{def:rel-en}
E\big(\rho,\oline\rho\big)\,:=\,H\big(\rho\big)\,-\,H\big(\oline\rho\big)\,-\,H'\big(\oline\rho\big)\,\big(\rho-\oline\rho)\,.
\end{equation}
\end{itemize}
The solution is said to be \emph{global} if the previous conditions hold true for all $T>0$.
\end{definition}

Consider now a family of global in time finite-energy weak solutions $\left(\rho^\veps,u^\veps\right)_\veps$ to system \eqref{e.nse}. Recall that the existence of such a family is only assumed here,
but open in general, because of anisotropy of the viscous stress tensor.
We collect here some uniform bounds verified by that family. We refer e.g. to \cite{F-N}, \cite{F-G-N} and \cite{F-G-GV-N} for details.
These bounds will be important in the next subsection, when proving stability estimates.

By assumption, for any $\veps\in(0,1]$ the energy inequality
\begin{align}
&\int_\Omega\!\!\left(\rho^\veps(t)|u^\veps(t)|^2+\frac{1}{\veps^2}E\left(\rho^\veps(t),\oline \rho\right)\right) \label{est:energy_eps}\\
&+\int^t_0\!\!\!\int_\Omega\left(\mu|\nabla_hu^\veps|^2+\veps|\d_3u^\veps|^2+\lambda|\nabla\cdot u^\veps|^2\right)\leq\ 
\int_\Omega\!\!\left(\rho_{in}^\veps|u_{in}^\ep|^2+\frac{2}{\veps^2}E\left(\rho_{in}^\ep,\oline \rho\right)\right) \nonumber
\end{align}
holds for almost every $t>0$. According to inequality (4.15) of \cite{F-N-S}, we have the following control, which holds for any positive scalar functions $\rho(x,t)$ and $r(x,t)$,
with $0 < r_- \leq r(x,t)\leq r_+$, for some real numbers $r_-, r_+$: there exist constants $c_1,\ c_2\,>\,0$ such that, for almost all $(x,t)\in\Omega\times\R_+$, one has
\begin{align}\label{est:rel-entr}
\begin{split}
&c_1\,\left(|\rho(x,t)-r(x,t)|^2\,\mathbf 1_{\{|\rho-r|(\cdot,t)<1\}}\,+\,|\rho(x,t)-r(x,t)|^\gamma\,\mathbf 1_{\left\{|\rho-r|(\cdot,t)\geq 1\right\}}\right) \\
&\leq\,E\left(\rho(x,t),r(x,t)\right)\\
&\leq\, c_2\,\left(|\rho(x,t)-r(x,t)|^2\,\mathbf 1_{\left\{|\rho-r|(\cdot,t)<1\right\}}\,+\,|\rho(x,t)-r(x,t)|^\gamma\,\mathbf 1_{\left\{|\rho-r|(\cdot,t)\geq 1\right\}}\right),
\end{split}
\end{align}
where the notation $\{|\rho-r|(\cdot,t)<1\}$ stands for the set of $x\in\Omega$ such that $|\rho(x,t)-r(x,t)|<1$ (and analogously for the $\geq$ symbol) and $\mathbf 1_A$ denotes the characteristic
function of a set $A\subset\Omega$. Notice that the same inequalities hold if we  replace 1 by any constant $M>0$, up to change the value of the constants $c_1$ and $c_2$.\\
Now, following Chapters 4 and 5 of \cite{F-N}, let us introduce the \emph{essential set} and the \emph{residual set} as follows: for almost every $t>0$, we set
\begin{align} \label{def:ess-res}
&\Omega_{\mathrm{ess}}(t):=\left\{x\in\Omega\;\big|\quad |\rho^\veps(x,t)-\oline \rho (x_3)|<\sigma \right\}\quad\mbox{ and }\quad
\Omega_{\mathrm{res}}(t):=\Omega\setminus\Omega_{\mathrm{ess}}(t)\,, 
\end{align}
for some $\sigma$ (to be fixed later) such that \begin{equation*}
0<\sigma<\inf_{(0,1)} \oline \rho\,.
\end{equation*}
Accordingly, given any function $h$, we define its \emph{essential part} and \emph{residual part} as
$$
[h]_{\mathrm{ess}}\,:=\,h\,\mathbf 1_{\Omega_{\mathrm{ess}}}\qquad\mbox{ and }\qquad
[h]_{\mathrm{res}}\,:=\,h\,\mathbf 1_{\Omega_{\mathrm{res}}}\,=\,h\,-\,[h]_{\mathrm{ess}}\,.
$$
Keep in mind that such a decomposition depends on $\rho^\veps$.

After this preparation, let us establish uniform bounds for $\left(\rho^\veps,u^\veps\right)_\veps$. First of all, in view of the assumptions we will fix on the initial data
$\left(\rho_{in}^\ep,u_{in}^\ep\right)_\veps$ in the next subsection, we can assume that the right hand side of \eqref{est:energy_eps} is uniformly bounded for $\veps\in(0,1]$.
Then, using \eqref{est:rel-entr}, we deduce the existence of a constant $C>0$ such that, for all $T>0$ fixed and all $0<\veps\leq1$, one has
\begin{align}
\left\|\sqrt{\rho^\veps}\,u^\veps\right\|_{L^\infty_T(L^2)}\,&\leq\,C \label{est:rho-u_eps} \\
\frac{1}{\veps}\,\left\|\left[\rho^\veps\,-\,\oline \rho\right]_{\mathrm{ess}}\right\|_{L^\infty_T(L^2)}\,&\leq\,C \label{est:rho_ess} \\
\sup_{t\in[0,T]}\mc L\left(\Omega_{\mathrm{res}}(t)\right)\,+\,\left\|\left[\rho^\veps\right]_{\mathrm{res}}\right\|^\g_{L^\infty_T(L^\g)}\,&\leq\,\,C\,\veps^2\,, \label{est:rho_res}
\end{align}
where $\mc L(A)$ denotes the Lebesgue measure of a set $A\subset\Omega$.
We refer to Section 2 of \cite{F-G-N} and Section 4 of \cite{F-N_CPDE} for details. \\
Next, let us consider the viscosity terms: recalling that $\mu>0$ and $\lambda>0$, from \eqref{est:energy_eps} we immediately get
\begin{align}
\left\|\nabla_hu^\veps\right\|_{L^2_T(L^2)}\,+\,\left\|\nabla\cdot u^\veps\right\|_{L^2_T(L^2)}\,\leq\,C  \label{est:D_h-div} \\
\sqrt{\veps}\;\left\|\d_3u_h^\veps\right\|_{L^2_T(L^2)}\,\leq\,C\,, \label{est:d_3u^h}
\end{align}
for some universal constant $C>0$ independent of $\veps$ and of the fixed time $T>0$. In addition, owing to the identity
$$
\d_3u_3^\veps\,=\,\nabla\cdot u^\veps\,-\,\nabla_h\cdot u_h^\veps\,,
$$
we also deduce that
\begin{equation} \label{est:d_3u^3}
\left\|\d_3u_3^\veps\right\|_{L^2_T(L^2)}\,\leq\,C\,.
\end{equation}

Finally, arguing exactly as in Section 2 of \cite{F-G-N}, we deduce that there exists a constant $C>0$ such that, for all $\veps>0$ and all $T>0$,
one has
\begin{equation} \label{est:u-L^2}
\left\|u^\veps\right\|_{L^2_T(L^2)}\,\leq\,C\,.
\end{equation}

\subsection{Stability estimates} \label{ss:rel-entropy}

This section is devoted to estimating the error between weak solutions to \eqref{e.nse} and their smooth approximation built in Subsection \ref{sec.formal}.
We consider \emph{well-prepared} initial data. Specifically, the initial density $(\rho_{{in}}^\ep)_\veps$ and velocity fields $(u_{{in}}^\ep)_\veps$ 
satisfy the following requirements:
\begin{itemize}
 \item for all $\veps\in\,(0,1]$, one has
\begin{equation}\label{e.rhoin}
\rho_{{in}}^\ep\,=\,\oline\rho\,+\,\veps\,r_{{in}}^\ep\,,\qquad\qquad\mbox{ with }\qquad (r_{{in}}^\ep)_\veps\,\subset\,\left(L^2\cap L^\infty\right)(\Omega)\,;
\end{equation}
\item we have $(u_{{in}}^\ep)_\veps\,\subset\,L^2(\Omega)$;
\item there exists $Q_{in}\in H^5(\R^2)$ such that, after defining
\begin{equation}\label{e.structureu0}
r_{in}:=\frac{\bar\rho}{P'(\bar\rho)}Q_{in}\qquad\mbox{ and }\qquad u_{in}:=\left(-\partial_2Q_{in},\partial_1Q_{in},0\right)\,,
\end{equation}
we have the strong convergence properties
\begin{align}
&r_{in}^\ep\longrightarrow r_{in}\quad\mbox{ and }\quad u_{in}^\ep\longrightarrow u_{in} \quad\qquad\mbox{ in }\qquad L^2(\Omega)\,.\label{e.cvr0} 
\end{align}
\end{itemize}

\begin{remark}
Condition \eqref{e.structureu0} implies in particular that  
\begin{equation}\label{e.wellpre}
\bar\rho \left(\begin{array}{c}
u_{in,h}^\perp\\[1ex] 0
\end{array}\right)+\left(\begin{array}{c}
P'(\bar\rho)\nabla_hr_{in}\\[1ex] \partial_3(P'(\bar\rho)r_{in})
\end{array}\right)=\,r_{in} \, \nabla G\,.
\end{equation}
\end{remark}

From the uniform bounds in Section \ref{ss:unif-bounds}, it is classical to derive that, up to extraction of a suitable subsequence, weak solutions $\left(\rho^\veps,u^\veps\right)_\veps$ converge to a limit state $(\bar\rho,\bar u)$ which belongs to the kernel of the singular perturbation operator. 
We refer to e.g. \cite{F-G-N}, \cite{F-G-GV-N}, \cite{F-N_CPDE} for details. 
The goal of the present subsection is to make this convergence quantitative, to show the general structure of the solutions and to take into account the correctors due to Ekman's boundary layers. 
We aim at proving the following result. Recall that the relative entropy $E$ is defined in \eqref{def:rel-en}.

\begin{theorem}\label{prop.main}
	For $\gamma\geq3/2$, 
	suppose that there exists a finite-energy weak solution $(\rho^\ep,u^\ep)_\ep$ to \eqref{e.nse} with well-prepared initial data $(\rho_{in}^\ep,u_{in}^\ep)_\ep\in L^\infty\times L^2$
	verifying hypotheses \eqref{e.rhoin}, \eqref{e.structureu0} and \eqref{e.cvr0}.
	Let $(\rho^\ep_{app}, u^\ep_{app})_\ep$ be defined as in \eqref{e.ansatzfinal}, and define $\de u^\ep= u^\ep - u^\ep_{app}$.
	Then, there exist functions $C_1(t), C_2(t) \in L^1([0,T))$ for all $T>0$, and constants $C>0$ and  $\ep_0\in(0,1)$ such that, for all $\ep\in(0,\ep_0)$, the following estimate holds,
	for almost every $t>0$:	\begin{equation*}
	\begin{aligned}
	&\int_{\Omega}\rho^\ep(t)|\de u^\ep(t)|^2dx+\frac1{\ep^2}\int_{\Omega}E(\rho^\ep(t),\rho^\ep_{app}(t))\,dx \\
	&\qquad+\,\int_{0}^{t}\!\!\!\int_{\Omega}\left(\mu|\nabla_h\de u^\ep|^2\,+\,\ep|\partial_3\de u^\ep|^2 \,+\,\lambda|\nabla\cdot\de u^\ep|^2\right)dx\\
	&\leq\,Ce^{2\int_{0}^{t}C_1(s)ds}\left(\int_{\Omega}\rho_{in}^\ep|\de u^\ep_{in}|^2dx+\frac1{\ep^2}\int_{\Omega}E(\rho_{in}^\ep,\rho^\ep_{{in},app})dx+
	\ep\int_{0}^t C_2(\tau)\,d\tau\right).
	\end{aligned}
	\end{equation*}
\end{theorem}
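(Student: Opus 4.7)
\textbf{Proof plan for Theorem \ref{prop.main}.} The natural strategy is the \emph{relative entropy method} adapted to the penalized compressible system \eqref{e.nse} and tested against the approximate profile $(\rho^\ep_{app},u^\ep_{app})$ constructed in Subsection \ref{sec.formal}. Concretely, I would introduce the functional
\begin{equation*}
\mathcal E^\ep(t)\,:=\,\int_\Omega\Big(\tfrac12\,\rho^\ep|u^\ep-u^\ep_{app}|^2\,+\,\tfrac{1}{\ep^2}\,E(\rho^\ep,\rho^\ep_{app})\Big)(t)\,dx
\end{equation*}
and derive the usual relative entropy inequality from \eqref{est:energy_eps}. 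This is done by combining the energy inequality \eqref{est:energy_eps} with the two identities obtained by testing the momentum equation in \eqref{e.nse} against $u^\ep_{app}$ and by testing the mass equation against $\tfrac12|u^\ep_{app}|^2-\tfrac{1}{\ep^2}H'(\rho^\ep_{app})$. Since $(\rho^\ep_{app},u^\ep_{app})$ solves the perturbed system \eqref{e.nseapp}, the time derivative of $\mathcal E^\ep$ is expressed as a coercive dissipation on $\de u^\ep:=u^\ep-u^\ep_{app}$ plus source terms stemming from (a) the remainders $R^{bl},R^\ep,S^{bl},S^\ep$, (b) the convective, pressure and Coriolis terms rearranged as commutators between $(\rho^\ep,u^\ep)$ and $(\rho^\ep_{app},u^\ep_{app})$, and (c) the extra drift coming from the $O(1/\ep)$ correction of $\bar\rho$ in the Coriolis force.

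The plan is then to absorb each source term, up to $C\,\ep$ and up to a multiple of $\mathcal E^\ep$ itself. The essential/residual decomposition \eqref{def:ess-res}, together with \eqref{est:rel-entr} and the uniform bounds \eqref{est:rho-u_eps}--\eqref{est:u-L^2}, handles the pressure contributions, provided $\gamma\geq 3/2$ (exactly the Lions-Feireisl threshold). The Coriolis contribution involving the boundary-layer drift on the right-hand side of \eqref{e.nseapp} combines with the quadratic term in $\de u^\ep$ to produce, after integration in $x_3$ and use of the ODE \eqref{e.blspiral}--\eqref{e.bdaryu0bl}, exactly the Ekman pumping term $\tfrac{\sqrt{\bar\rho(0)}+\sqrt{\bar\rho(1)}}{\sqrt 2}|u_{0,h}|^2$ that is already encoded in the damping of the limit equation \eqref{e.qg}; this cancellation is the mechanism that makes the boundary-layer part of the ansatz consistent. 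The boundary-layer remainders are then handled via Hardy's inequality, exploiting the exponential decay in $\zeta$ and $\eta$ and the fact that $\de u^\ep$ vanishes on $\{x_3=0,1\}$: typical estimates give $\|\mathbf{1}_{\{x_3\lesssim\ep\}}\de u^\ep\|_{L^2}\lesssim \ep\,\|\partial_3\de u^\ep\|_{L^2}$, so that the factor $\ep$ in front of the vertical dissipation in \eqref{est:energy_eps} is precisely absorbed.

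The main difficulty is to close the nonlinear convective term $\int \rho^\ep(u^\ep-u^\ep_{app})\cdot\nabla u^\ep_{app}\cdot\de u^\ep$, because the vertical viscosity in the dissipation is only of order $\ep$, so $\partial_3\de u^\ep_h$ is \emph{not} controlled uniformly. This is exactly where the anisotropic Sobolev embedding of Lemma \ref{lem.ani} enters: one bounds the critical contribution by a product $\|\nabla_h\de u^\ep\|_{L^2}^{1-\theta}\|\de u^\ep\|_{L^2}^{\theta}\,\|\nabla u^\ep_{app}\|_{X}$ with $X$ chosen (anisotropically) so as to use only horizontal derivatives of $u^\ep_{app}$, which are well controlled thanks to the regularity statement of Lemma \ref{regularityuapp}. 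This is also the step where the improvement over \cite{B-D-GV} and the extension to $\gamma\geq 3/2$ is gained. After these reductions one obtains a differential inequality of the form
\begin{equation*}
\frac{d}{dt}\mathcal E^\ep(t)+c\int_\Omega\bigl(\mu|\nabla_h\de u^\ep|^2+\ep|\partial_3\de u^\ep|^2+\lambda|\nabla\cdot\de u^\ep|^2\bigr)dx\,\leq\,C_1(t)\,\mathcal E^\ep(t)+\ep\,C_2(t),
\end{equation*}
with $C_1,C_2\in L^1([0,T))$ by Lemma \ref{highenergyqg} and the uniform bounds of Subsection \ref{ss:unif-bounds}. Grönwall's lemma, applied after integrating in time on $[0,t]$, yields the announced estimate, with the initial datum $\mathcal E^\ep(0)$ replacing $\mathcal E^\ep$ at time $0$ thanks to the well-preparedness assumptions \eqref{e.rhoin}--\eqref{e.cvr0} and the definition of $\rho^\ep_{in,app}$ through \eqref{e.ansatzfinal}.
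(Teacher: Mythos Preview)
Your overall strategy---relative entropy, essential/residual decomposition, Hardy for boundary-layer remainders, Gr\"onwall---matches the paper. But two points are genuine gaps.

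First, your account of the Coriolis/boundary-layer mechanism is mistaken. The Ekman pumping $\tfrac{\sqrt{\bar\rho(0)}+\sqrt{\bar\rho(1)}}{\sqrt 2}\Delta_hQ$ is \emph{not} produced at the relative entropy level; it is already built into the ansatz via the equation \eqref{e.qg} satisfied by $Q$. The drift terms $\tfrac{x_3}{\ep}\int_0^1\partial_3\bar\rho(sx_3)\,ds\,e_3\times u^{bl}_{0,h,b}$ on the right of \eqref{e.nseapp} are merely Taylor remainders from replacing $\bar\rho(x_3)$ by $\bar\rho(0)$ in the boundary-layer ODE; in the paper they become the source terms $I_{10},I_{11}$ and are estimated directly by Hardy, with no cancellation against anything.

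Second, and more seriously, you do not identify the cancellation that makes the $O(1/\ep^2)$ and $O(1/\ep)$ source terms close. The gravity term $\tfrac{1}{\ep^2}\int\delta\rho^\ep\nabla G\cdot\delta u^\ep$, the pressure remainder $-\tfrac{1}{\ep^2}\int H''(\rho^\ep_{app})\nabla\rho^\ep_{app}\cdot\delta u^\ep\,\delta\rho^\ep$, and the Coriolis term $-\tfrac{1}{\ep}\int \delta\rho^\ep\,e_3\times u^\ep_{app}\cdot\delta u^\ep$ are each individually too singular. The paper groups them ($I_1+I_2+I_5$) and uses the geostrophic structure of the ansatz: $\nabla G=H''(\bar\rho)\nabla\bar\rho$ together with $u_{0,h}=\nabla_h^\perp Q$ and the identity $\partial_3\big(H''(\bar\rho)\rho_1\big)+H''(\bar\rho)\partial_3\rho_1+H'''(\bar\rho)\rho_1\partial_3\bar\rho=0$ coming from \eqref{e.qindepx3} make the leading $O(1/\ep)$ contributions cancel exactly. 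What survives is $O(1)$ plus a piece $\tfrac{1}{\ep}\int\delta\rho^\ep(u^{bl}_{0,h})^\perp\cdot\delta u^\ep_h$ handled by Hardy. Without this step the differential inequality does not close.

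Finally, your localization of Lemma \ref{lem.ani} is off. It is not used on the generic convective term but on the worst piece of $I_7$, namely $J_7=-\tfrac{1}{\ep}\int\rho^\ep\,\delta u^\ep_3\,\partial_\zeta u^{bl}_{0,h}\cdot\delta u^\ep_h$, where $\partial_3 u^\ep_{app}$ in the layer is $O(1/\ep)$. For $3/2\leq\gamma<2$, after Hardy on $\delta u^\ep_3$ and H\"older against $\|[\delta\rho^\ep]_{\rm res}\|_{L^\gamma}$, one needs $\|\delta u^\ep_h\|_{L^6}$; Lemma \ref{lem.ani} with $\kappa=\ep^{(-\frac12+\frac1\gamma)_+}$ balances the horizontal and vertical gradients so both are absorbed by the dissipation. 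This is precisely where the constraint $\gamma\geq 3/2$ appears (see \eqref{ineq32}), not in the pressure terms.
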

\begin{remark}
	The lower bound for the exponent $\gamma$ comes from the control of the source term in the relative entropy inequality \eqref{e.entroest}: in particular, in \eqref{ineq32} we need
	$\gamma\geq 3/2$ to apply H\"{o}lder's inequality and get the estimate \eqref{est:I_7}.
\end{remark}
In order to prove the previous result, we resort to the technique of the \emph{relative entropy/relative energy} inequality, see e.g. \cite{Germ}, \cite{F-N-S},
\cite{F-J-N}, \cite{F-N_CPDE} and \cite{F-L-N}.
The relative entropy estimate of those works 
is directly applicable in our framework, but it is not
immediately clear how to take advantage of the small remainders in \eqref{e.nseapp}.
Instead, we directly derive the entropy inequality on the system for $(\de\rho^\ep,\de u^\ep)$ and take into account from the beginning 
that $(\rho^\ep_{app},u^\ep_{app})$ is almost a solution to \eqref{e.nse}. On the contrary, the relative entropy inequality of e.g. \cite{F-N_CPDE} holds for a much wider class of smooth functions.

\subsubsection{The relative entropy inequality} \label{sss:relative-entropy}
We set
$$
\de\rho^\veps\,:=\,\rho^\ep-\rho^\ep_{app}\qquad\qquad\mbox{ and }\qquad\qquad \de u^\veps\,:=\,u^\ep-u^\ep_{app}\,.
$$
From systems \eqref{e.nse} and \eqref{e.nseapp}, it is easy to find an equation for  $\de\rho^\veps$ and $\de u^\ep$: after setting $\de P^\veps\,:=\,P(\rho^\ep)-P(\rho^\ep_{app})$,
we get
\begin{align}
&\partial_t\de\rho^\ep+\nabla\cdot(u^\ep_{app}\de\rho^\ep)=-\nabla\cdot(\rho^\ep\de u^\ep)-\ep R^{bl}-\ep^2R^\ep \label{e.eqrhoepapp-} \\
&\rho^\ep\partial_t\de u^\ep+\rho^\ep u^\ep\cdot\nabla\de u^\ep+\frac1\ep e_3\times \rho^\ep\de u^\ep+\frac1{\ep^2}\nabla\de P^\ep-\Delta_{\mu,\veps}\de u^\ep-\lambda\nabla\nabla\cdot\de u^\ep \label{e.equepapp-} \\
&= \frac{1}{\ep^2}\, \de \rho^\ep \,\nabla G-\de\rho^\ep\partial_t u^\ep_{app}+(\rho^\ep_{app} u^\ep_{app}-\rho^\ep u^\ep)\cdot\nabla
u^\ep_{app} \nonumber \\
&\qquad -\frac1\ep e_3\times\de\rho^\ep u^\ep_{app}-S^{bl}-\ep S^\ep-\frac{x_3}{\ep}\int_{0}^{1}\partial_3 \oline\rho(sx_3)ds e_3\times u^{bl}_{0,h,b} \nonumber \\
&\qquad\qquad\qquad\qquad\qquad\qquad+\frac{1-x_3}{\ep}\int_{0}^{1}\partial_3 \oline\rho(1-s(1-x_3))dse_3\times u^{bl}_{0,h,t}\,. \nonumber
\end{align}

From the point of view of energy estimates, the main term to work on is the difference of the pressure terms. Testing it against $\de u^\veps$ yields
\begin{align}\label{e.diffpress}
\int_{\Omega}\nabla\de P^\ep\cdot\de u^\ep dx\,&=\,\int_{\Omega}\nabla P(\rho^\ep)\cdot u^\ep dx-\int_{\Omega}\nabla P(\rho^\ep_{app})\cdot u^\ep_{app}dx \\
&\qquad\qquad\qquad +\int_{\Omega}\nabla\cdot u^\ep_{app}\,\de P^\ep\,dx-\int_{\Omega}\nabla P(\rho^\ep_{app})\cdot\de u^\ep dx\,. \nonumber
\end{align}
By standard computations, using the mass equation in \eqref{e.nse}, we get
\begin{align*}
\int_{\Omega}\nabla P(\rho^\ep)\cdot u^\ep dx\,&=\,\int_{\Omega}\nabla \left(H'(\rho^\ep)\right)\cdot \rho^\ep u^\ep dx\,=\,\frac{d}{dt}\int_{\Omega}H(\rho^\ep)dx\,.
\end{align*}
Similarly, from the first equation in \eqref{e.nseapp} we gather
\begin{equation*}
\int_{\Omega}\nabla P(\rho^\ep_{app})\cdot u^\ep_{app}dx\,=\,\frac{d}{dt}\int_{\Omega}H(\rho^\ep_{app})dx-\ep\int_{\Omega}H'(\rho^\ep_{app})(R^{bl}+\ep R^\ep) dx\,.
\end{equation*}
In identity \eqref{e.diffpress}, we now add and substract the term $\frac{d}{dt}\int H'(\rho^\ep_{app})\de\rho^\ep dx$, in order to make the relative entropy $E\left(\rho^\veps(t),\rho^\veps_{app}(t)\right)$ appear.
Then, from \eqref{e.diffpress} and the previous computations we infer
\begin{align*}
\int_{\Omega}\nabla\de P^\ep\cdot\de u^\ep dx\,&=\,\frac{d}{dt}\int_\Omega E\left(\rho^\ep,\rho^\ep_{app}\right)dx+\int_{\Omega}\nabla\cdot u^\ep_{app}\,\de P^\ep dx-
\int_{\Omega}\nabla P(\rho^\ep_{app})\cdot\de u^\ep dx\\
&\qquad +\frac{d}{dt}\int_{\Omega}H'(\rho^\ep_{app})\,\de\rho^\ep dx+\ep\int_{\Omega}H'(\rho^\ep_{app})\left(R^{bl}+\ep R^\ep \right)dx\,.
\end{align*}
Using again the mass equations in \eqref{e.nse} and \eqref{e.nseapp}, we get
\begin{align*}
\frac{d}{dt}\int_{\Omega}H'(\rho^\ep_{app})\,\de\rho^\ep dx\,&=\,\int_{\Omega}\partial_tH'(\rho^\ep_{app})\,\de\rho^\ep dx+\int_{\Omega}H'(\rho^\ep_{app})\d_t\de\rho^\ep dx\\
&=\,\int_{\Omega}\partial_tH'(\rho^\ep_{app})\,\de\rho^\ep dx+\int_{\Omega}\nabla H'(\rho^\ep_{app})\cdot(\rho^\ep u^\ep-\rho^\ep_{app}u^\ep_{app})dx\\
&\qquad\qquad\qquad\qquad\qquad\qquad -\ep\int_{\Omega}H'(\rho^\ep_{app})\left(R^{bl}+\ep R^\ep\right) dx\,.
\end{align*}
This relation yields
\begin{align}
\int_{\Omega}\nabla\de P^\ep\cdot\de u^\ep\,dx\,&=\,\frac{d}{dt}\int_\Omega E\left(\rho^\ep,\rho^\ep_{app}\right)dx 
-\int_{\Omega}\nabla P(\rho^\ep_{app})\cdot\de u^\ep\,dx+ I\,, \label{eq:press_rel-entr} 
\end{align}
where we have defined
$$
I:=\int_{\Omega}\nabla\cdot u^\ep_{app}\de P^\ep dx +\int_{\Omega}\partial_tH'(\rho^\ep_{app})\de\rho^\ep dx +\int_{\Omega}\nabla H'(\rho^\ep_{app})\cdot(\rho^\ep u^\ep-\rho^\ep_{app}u^\ep_{app})dx.
$$
Let us work on this term for a while. We use the following Taylor expansion,
\begin{align}\label{eq:def-P}
P\left(\rho^\ep,\rho^\ep_{app}\right)\,&:=\,P(\rho^\ep)-P(\rho^\veps_{app})-P'(\rho^\ep_{app})\,\de\rho^\ep\\
&=\,\frac{1}{2}\left(\de\rho^\ep\right)^2\int_0^1(1-s)\,P''\left(\rho^\ep_{app}+s\de\rho^\ep\right)ds\,, \nonumber
\end{align}
and the fact that $H''(z)=P'(z)/z$ according to \eqref{def:H}, 
to obtain the next series of equalities:
\begin{equation*}
\begin{aligned}
I\,&=\,\int_{\Omega}\nabla\cdot u^\ep_{app}P'(\rho^\ep_{app})\de\rho^\ep dx+\int_{\Omega}\nabla\cdot u^\ep_{app}P\left(\rho^\ep,\rho^\ep_{app}\right)dx \\
&\quad+\int_{\Omega}H''(\rho^\ep_{app})\partial_t\rho^\ep_{app}\,\de\rho^\ep dx+\int_{\Omega}H''(\rho^\ep_{app})\nabla\rho^\ep_{app}\cdot(\rho^\ep u^\ep-\rho^\ep_{app}u^\ep_{app})dx\\
&=\,\int_{\Omega}\nabla\cdot u^\ep_{app}P'(\rho^\ep_{app})\,\de\rho^\ep dx-\int_{\Omega}H''(\rho^\ep_{app})\rho^\ep_{app}\nabla\cdot u^\ep_{app}\,\de\rho^\ep dx\\
&\quad+\int_{\Omega}H''(\rho^\ep_{app})\left(\partial_t\rho^\ep_{app}+\nabla\cdot(\rho^\ep_{app}u^\ep_{app})\right)\,\de\rho^\ep dx\\
&\quad+\int_{\Omega}H''(\rho^\ep_{app})\nabla\rho^\ep_{app}\cdot\de u^\ep\,\rho^\ep dx+\int_{\Omega}\nabla\cdot u^\ep_{app}P\left(\rho^\ep,\rho^\ep_{app}\right)dx\\
&=\,\int_{\Omega}H''(\rho^\ep_{app})\nabla\rho^\ep_{app}\cdot\de u^\ep\,\rho^\ep dx+\ep\int_{\Omega}H''(\rho^\ep_{app})\de\rho^\ep\left(R^{bl}+\ep R^\ep\right) dx\\
&\quad+\int_{\Omega}\nabla\cdot u^\ep_{app}\,P\left(\rho^\ep,\rho^\ep_{app}\right)dx\,.
\end{aligned}\end{equation*}
The last two terms in the above identity are small (in a sense to be made precise later). So, let us focus on the first term in the right hand side: we have
\begin{align*}
\int_{\Omega}H''(\rho^\ep_{app})\nabla\rho^\ep_{app}\cdot\de u^\ep\rho^\ep dx-\int_{\Omega}\nabla P(\rho^\ep_{app})\cdot\de u^\ep dx\,=\,
\int_{\Omega}H''(\rho^\ep_{app})\nabla\rho^\ep_{app}\cdot\de u^\ep\de\rho^\ep dx\,.
\end{align*}
Inserting this expression into the last equality for $I$, from \eqref{eq:press_rel-entr} we finally find
\begin{align}\label{e.diffpressfinal}
\begin{split}
&\int_{\Omega}\nabla\de P^\ep\cdot\de u^\ep dx\,=\,\frac{d}{dt}\int_\Omega E\left(\rho^\ep,\rho^\ep_{app}\right)dx+\int_{\Omega}H''(\rho^\ep_{app})\nabla\rho^\ep_{app}\cdot\de u^\ep\,\de\rho^\ep dx\\
&\qquad\qquad +\ep\int_{\Omega}H''(\rho^\ep_{app})\de\rho^\ep\left(R^{bl}+\ep R^\ep \right) dx+\int_{\Omega}\nabla\cdot u^\ep_{app}\,P\left(\rho^\ep,\rho^\ep_{app}\right)dx\,.
\end{split}
\end{align}
At this point, we can perform energy estimates directly on equations \eqref{e.eqrhoepapp-}-\eqref{e.equepapp-}. Using \eqref{e.diffpressfinal} above, we obtain
\begin{align}\label{e.entroest}
\begin{split}
&\frac d{dt}\int_\Omega\left(\frac12\,\rho^\ep|\de u^\ep|^2+\frac1{\ep^2}E\left(\rho^\ep,\rho^\ep_{app}\right)\right)dx\\
&\quad+\mu\int_{\Omega}|\nabla_h\de u^\ep|^2 dx+\ep\int_{\Omega}|\partial_3\de u^\ep|^2 dx+\lambda\int_{\Omega}|\nabla\cdot\de u^\ep|^2 dx\\
&\leq \, \frac1{\ep^2}\int_{\Omega} \de \rho^\ep \,\nabla G \cdot \de u^\ep\,dx\,-\frac1{\ep^2}\int_{\Omega}H''(\rho^\ep_{app})\nabla\rho^\ep_{app}\cdot \de u^\ep\,\de\rho^\ep dx \, \\
&\quad-\frac1{\ep}\int_{\Omega}H''(\rho^\ep_{app})\de\rho^\ep\left(R^{bl}+\ep R^\ep \right) dx-\frac1{\ep^2}\int_{\Omega}\nabla\cdot u^\ep_{app}\,P\left(\rho^\ep,\rho^\ep_{app}\right)dx \\
&\quad-\frac1\ep\int_{\Omega}e_3\times\de\rho^\ep u^\ep_{app}\cdot\de u^\ep dx-\int_{\Omega}\de\rho^\ep\partial_tu^\ep_{app}\cdot\de u^\ep dx\\
&\quad+\int_{\Omega}(\rho^\ep_{app} u^\ep_{app}-\rho^\ep u^\ep)\cdot\nabla u^\ep_{app}\cdot\de u^\ep dx-\int_{\Omega}S^{bl} \cdot \de u^\ep \,dx-\ep\int_{\Omega}S^\ep\cdot\de u^\ep dx\,\\
&\quad -\,\frac{1}{\ep}\int_{\Omega}x_3\int_{0}^{1}\partial_3 \oline\rho\,(s\,x_3)\,ds\, (u^{bl}_{0,h,b})^\perp\cdot \delta u^\ep_h\,dx\\
&\quad +\,\frac{1}{\ep}\,\int_{\Omega}\,(1-x_3)\int_{0}^{1}\partial_3 \oline\rho\,(1-s(1-x_3))\,ds\,(u^{bl}_{0,h,t})^\perp\cdot \delta u^\ep_h\,dx\,=\,\sum_{j=1}^{11}\,I_j\,.
\end{split}
\end{align}

\begin{remark}
In order to rigorously justify the relative entropy inequality \eqref{e.entroest}, where the equality holds if the solutions are regular enough,
one may either proceed as in \cite{F-J-N}, or use a regularization argument (see for instance \cite{F-N-S} and \cite{Germ}).
\end{remark}

Our next goal is to bound each term appearing in the sum $\sum_{j=1}^{11}\,I_j$ in the right hand side of \eqref{e.entroest}.
Before doing that, let us remark that, since $\rho_1,\rho_2\,\in\,L^\infty\left(\Omega\times\R_+\right)$,  up to restrict our attention to all $\veps\leq\veps_0$, with $\ep_0$ depending on $\|\rho_1\|_{L^\infty_{t,x}}$ and $\|\rho_2\|_{L^\infty_{t,x}}$, we can assume that 
$-\frac{\sigma}{2}\leq \ep \rho_1 +\ep^2 \rho_2\leq \frac{\sigma}{2}$ with $\sigma>0$ as in \eqref{def:ess-res}. Consequently, we can suppose that 
$$
0<\rho^-_{app} \leq \rho^\ep_{app} (x,t) \leq \rho^+_{app}\qquad\qquad\mbox{ for all }\qquad \veps>0\,,
$$
with  $\rho^-_{app}= \inf_{(0,1)} \oline \rho -\sigma$ and $\rho^+_{app}= \sup_{(0,1)} \oline \rho +\sigma$.
Then, in view of \eqref{est:rel-entr}, we have the following control:
\begin{align}
\label{e.entrocontrol}
E\left(\rho^\veps,\rho^\veps_{app}\right)(x,t)\,\geq\,c\,\left(|\de\rho^\veps(x,t)|^2\,\mathbf 1_{\left\{|\de\rho^\veps|(\cdot,t)<1\right\}}\,+\,
|\de\rho^\veps(x,t)|^\gamma\,\mathbf 1_{\left\{|\de\rho^\veps|(\cdot,t)\geq 1\right\}}\right)\,.
\end{align}
Resorting to the definitions \eqref{def:ess-res}, from \eqref{e.entrocontrol} 
we derive the following lower bound. 
\begin{lemma}
There exist $\sigma>0$ small enough (depending on $\inf\limits_{(0,1)} \oline \rho$) and 
a positive constant $c>0$, independent of $\veps\in\,]0,\veps_0]$, such that, for almost all $(x,t)\in\Omega\times\R_+$, the following bound holds:
	\begin{align}
	E\left(\rho^\veps(x,t),\rho^\veps_{app}(x,t)\right)\,\geq\,c\,\left(\left[\delta\rho^\veps\right]^2_{\mathrm{ess}}(x,t)\,+\,\mathbf{1}_{\Omega_{\mathrm{res}}(t)}(x)\right)\,.
	\label{est:E_low-b}
	\end{align}
	\label{lemmaestE}
\end{lemma}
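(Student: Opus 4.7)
The plan is to deduce the stated lower bound from inequality \eqref{e.entrocontrol} by carefully exploiting the a priori smallness of $\rho^\ep_{app} - \bar\rho = \ep\rho_1 + \ep^2\rho_2$, which, as recorded just above the lemma, is bounded by $\sigma/2$ in $L^\infty$ (provided we fix $\ep_0$ small, depending on $\|\rho_1\|_{L^\infty}$ and $\|\rho_2\|_{L^\infty}$). The key point is that this allows us to compare $|\delta\rho^\ep|$ with $|\rho^\ep - \bar\rho|$ up to an error of size $\sigma/2$, which in turn lets us relate the natural decomposition $\{|\delta\rho^\ep|<1\}$ vs. $\{|\delta\rho^\ep|\geq 1\}$ used in \eqref{e.entrocontrol} to the essential/residual decomposition \eqref{def:ess-res} based on $|\rho^\ep-\bar\rho|<\sigma$ vs.\ $\geq \sigma$.

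First, I would split the analysis by location. On $\Omega_{\mathrm{ess}}(t)$, where $|\rho^\ep-\bar\rho|<\sigma$, the triangle inequality gives $|\delta\rho^\ep|\leq \sigma+\sigma/2 = 3\sigma/2$. Choosing $\sigma\leq 2/3$ (which is legitimate since we only need $\sigma<\inf_{(0,1)}\bar\rho$ and $\sigma$ small), this forces $|\delta\rho^\ep|<1$, so the first term on the right-hand side of \eqref{e.entrocontrol} is active and yields
\begin{equation*}
E(\rho^\ep,\rho^\ep_{app})\,\geq\, c\,|\delta\rho^\ep|^2\,=\,c\,[\delta\rho^\ep]^2_{\mathrm{ess}}\,,
\end{equation*}
while the contribution $\mathbf{1}_{\Omega_{\mathrm{res}}(t)}$ vanishes identically on this set.

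Next, on $\Omega_{\mathrm{res}}(t)$, where $|\rho^\ep-\bar\rho|\geq \sigma$, I would show that $E$ is bounded below by a positive constant. Split into two sub-cases. If $|\delta\rho^\ep|\geq 1$, then \eqref{e.entrocontrol} gives $E\geq c\,|\delta\rho^\ep|^\gamma \geq c$. If instead $|\delta\rho^\ep|<1$, the reverse triangle inequality gives $|\delta\rho^\ep|\geq |\rho^\ep-\bar\rho| - |\ep\rho_1+\ep^2\rho_2|\geq \sigma - \sigma/2 = \sigma/2$, so the first term on the right of \eqref{e.entrocontrol} is active and gives $E\geq c\,(\sigma/2)^2$. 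In both sub-cases, $E\geq c_\sigma:=\min(c,c\sigma^2/4)>0$ on $\Omega_{\mathrm{res}}(t)$, and here $[\delta\rho^\ep]^2_{\mathrm{ess}}=0$.

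Combining the two regimes and taking the smaller of the resulting constants yields the claimed pointwise inequality. No real obstacle arises: the only subtlety is to check that $\sigma$ can indeed be chosen simultaneously (i) smaller than $\inf_{(0,1)}\bar\rho$ so as to keep $\rho^\ep_{app}$ bounded away from zero and $+\infty$ uniformly in $\ep$ (as required to apply \eqref{est:rel-entr} with $r=\rho^\ep_{app}$), (ii) at most $2/3$ so that the essential-set computation above forces $|\delta\rho^\ep|<1$, and (iii) compatible with the bound $|\ep\rho_1+\ep^2\rho_2|\leq \sigma/2$ for $\ep\in(0,\ep_0]$; all three conditions are independent and can be enforced by fixing $\sigma$ small first and then shrinking $\ep_0$ accordingly.
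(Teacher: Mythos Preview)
Your proof is correct. The essential-set part is identical to the paper's argument. For the residual set, however, the paper takes a different route: rather than splitting into the sub-cases $|\delta\rho^\ep|\geq 1$ and $|\delta\rho^\ep|<1$ and applying \eqref{e.entrocontrol} in each, the authors exploit the strict convexity of $E(\cdot,\rho^\ep_{app})$ around its minimum at $\rho^\ep_{app}$, observing that on $\Omega_{\mathrm{res}}(t)$ one has $\rho^\ep\leq\bar\rho-\sigma<\rho^\ep_{app}$ or $\rho^\ep\geq\bar\rho+\sigma>\rho^\ep_{app}$, so that monotonicity gives $E(\rho^\ep,\rho^\ep_{app})\geq E(\bar\rho\pm\sigma,\rho^\ep_{app})$, and then a second-order Taylor expansion of $E$ at $\rho^\ep_{app}$ yields the lower bound $H''(\rho^\ep_{app})\sigma^2/16$. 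Your argument is more elementary in that it avoids the monotonicity and Taylor steps, relying only on \eqref{e.entrocontrol} and the triangle inequality; the paper's argument, on the other hand, makes the structure of $E$ (and the role of $H''$) more explicit. Both reach the same conclusion with essentially the same constraints on $\sigma$ and $\ep_0$.
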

\begin{proof}
	We divide the proof of the inequality into two steps. First we show that 
	$$E\left(\rho^\veps(x,t),\rho^\veps_{app}(x,t)\right)\,\geq\,c\,|\de\rho^\veps(x,t)|^2\,\mathbf 1_{\left\{|\de\rho^\veps(x,t)|<1\right\}}$$ implies the lower bound
	\begin{equation}
	E\left(\rho^\veps(x,t),\rho^\veps_{app}(x,t)\right)\,\geq\,c\,\left[\delta\rho^\veps\right]^2_{\mathrm{ess}}(x,t).\label{boundess}
	\end{equation}
	For this, we just need to show that $\Omega_{\mathrm{ess}}(t)\subseteq \{|\, |\de\rho^\ep(x,t)|<1 \}$. Let $x\in \Omega_{\mathrm{ess}}(t)$, then 
	\begin{equation*}
	-\frac{3}{2}\sigma \leq -\sigma-\ep\rho_1(x,t)-\ep^2 \rho_2(x,t)<\de \rho^\ep(x,t)< -\ep\rho_1(x,t) -\ep^2 \rho_2(x,t)+\sigma \leq \frac{3}{2}\sigma,
	\end{equation*}where we have used that $-\frac{\sigma}{2}\leq\ep \rho_1(x,t) +\ep^2 \rho_2(x,t)\leq \frac{\sigma}{2}$. By choosing $\sigma$ such that
	$\sigma < \min \left(2/3\,,\,\inf\limits_{(0,1)} \oline \rho\right)$,
we deduce that $|\delta \rho^\ep(x,t)|<1$. Thus, \eqref{boundess} is proved. \\ Afterwards, we prove that for, $x\in \Omega_{\mathrm{res}}(t)$, one has
	\begin{equation}E\left(\rho^\veps(x,t),\rho^\veps_{app}(x,t)\right)\geq c\,,
	\label{boundres}\end{equation}
	where $c$ is a positive constant independent of $\veps$, $t$ and $x$.
	By the definition of $\Omega_{\mathrm{res}}(t)$, either $\rho^\ep(x,t)\leq \oline \rho(x_3) -\sigma $ or $\rho^\ep(x,t)\geq \oline \rho(x_3) +\sigma$. Hence, since $E(\cdot, \rho^\ep_{app}(x,t))$
	is strictly decreasing before $\rho^\ep_{app}(x,t)$ and strictly increasing after $\rho^\ep_{app}(x,t)$, we get 
	\begin{equation*}
	E\left(\rho^\veps(x,t),\rho^\veps_{app}(x,t)\right)\geq 	E\left(\oline\rho(x_3)-\sigma,\rho^\veps_{app}(x,t)\right)
	\end{equation*}
	if $\rho^\ep(x,t)\leq \oline \rho(x_3) -\sigma $, and
	\begin{equation*}
	E\left(\rho^\veps(x,t),\rho^\veps_{app}(x,t)\right)\geq 	E\left(\oline\rho(x_3)+\sigma,\rho^\veps_{app}(x,t)\right)
	\end{equation*} 
	if $\rho^\ep(x,t)\geq \oline \rho(x_3) +\sigma$.
	Now, by Taylor's formula, up to taking a smaller $\sigma$ (which amounts to choosing a smaller $\veps_0$), we have
	\begin{align*}
	E\left(\oline\rho(x_3)-\sigma,\rho^\veps_{app}(x,t)\right)&\geq \frac{H^{''}(\rho^\ep_{app}(x,t))}{4}(-\sigma-\ep \rho_1(x,t)-\ep^2\rho_2(x,t))^2\\&\geq \frac{H^{''}(\rho^\ep_{app}(x,t))\sigma^2}{16} \\
	E\left(\oline\rho(x_3)+\sigma,\rho^\veps_{app}(x,t)\right)&\geq \frac{H^{''}(\rho^\ep_{app}(x,t))}{4}(\sigma -\ep \rho_1(x,t)-\ep^2\rho_2(x,t))^2\\&\geq \frac{H^{''}(\rho^\ep_{app}(x,t))\sigma^2}{16}.
	\end{align*}
	Then, using the uniform boundedness in time and space of $\rho^\ep_{app}$ and hypothesis \eqref{hyp:p}, we get \eqref{boundres}. The lemma is proved.
\end{proof}
Notice that $\left[|\delta\rho^\veps|\right]_{\mathrm{ess}}$ is uniformly bounded.
Next, we claim that there exists a constant $C>0$ such that, for all $T>0$ fixed, one has
\begin{equation} \label{est:d-rho_res}
\left\|\left[\de\rho^\veps\right]_{\mathrm{res}}\right\|_{L^\infty_T(L^p)}\,\leq\,C\,\veps^{2/p}\qquad\qquad\forall\;p\in[1,\g]\,.
\end{equation}
Indeed, 
by H\"older's inequality, the $L^\infty$ control on $\rho^\veps_{app}$ and \eqref{est:rho_res}, we deduce
\begin{align*}
\int_\Omega\left|\left[\de\rho^\veps\right]_{\mathrm{res}}\right|\,&\leq\,\int_\Omega\left[\rho^\veps\right]_{\mathrm{res}}\,+\,\int_\Omega\left[\rho^\veps_{app}\right]_{\mathrm{res}} \\
&\leq\,\left(\int_\Omega\left(\rho^\veps\right)^\g\,\mathbf 1_{\Omega_{\mathrm{res}}}\right)^{\!1/\g}\,\left(\mc L(\Omega_{\mathrm{res}})\right)^{\!1/\g'}\,+\,C\,\mc L(\Omega_{\mathrm{res}})\,\leq\,C\,\veps^2\,,
\end{align*}
which yields \eqref{est:d-rho_res} for $p=1$. As for the $L^\g$ norm, we write
\begin{equation} \label{eq:Omega_res}
\Omega_{\mathrm{res}}(t)\,=\,\left\{0<\rho^\veps(x,t)\leq\oline \rho(x_3)-\sigma\right\}\cup\left\{\rho^\veps(x,t)\geq \oline \rho(x_3) + \sigma\right\}\,.
\end{equation}
For the first set, we just apply \eqref{est:rho_res} again,
since $\rho^\veps$ is bounded therein. For the second set, we use the fact that, for $a\geq\de$ and $b\geq0$, with $b\leq b^*$, one has
$|a-b|^\g\,\leq\,(a+b)^\g\,\leq\,C_{\de,b^*}\,\left(a^\g+1\right)$. The case $1<p<\g$ follows from interpolation.

\subsubsection{Anisotropic Sobolev embedding}

We introduce an anisotropic version of the standard Sobolev embedding $\dot{H}^1 \hookrightarrow L^6$.
This estimate enables us to handle the anisotropy of the viscosity in the stability estimates below, see in particular the treatment of $I_7$.

\begin{lemma}[\textbf{anisotropic Sobolev embedding}]\label{lem.ani}
	Let $\Omega=\R^2\times(0,1)$.
	There exists a universal constant $C>0$ such that, for all $\kappa>0$ and all $u\in H^1_0(\Omega)$, one has
	\begin{equation}\label{e.estkappa}
	\|u\|_{L^6(\Omega)}\leq C\left(\kappa^{-\frac12}\|\nabla_hu\|_{L^2(\Omega)}+\kappa\|\partial_3u\|_{L^2(\Omega)}\right).
	\end{equation}
\end{lemma}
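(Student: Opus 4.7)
The inequality is a standard Sobolev embedding in $\R^{3}$ combined with an anisotropic rescaling in the vertical direction, plus an optimization of the scaling parameter. Because $u\in H^{1}_{0}(\Omega)$, its extension $\tilde u$ by zero to $\R^{3}$ lies in $H^{1}(\R^{3})$, with $\|\nabla_{h}\tilde u\|_{L^{2}(\R^{3})}=\|\nabla_{h}u\|_{L^{2}(\Omega)}$ and $\|\partial_{3}\tilde u\|_{L^{2}(\R^{3})}=\|\partial_{3}u\|_{L^{2}(\Omega)}$. The plan is to apply the classical isotropic embedding $\|\tilde u\|_{L^{6}(\R^{3})}\leq C\|\nabla\tilde u\|_{L^{2}(\R^{3})}$ after a well-chosen dilation.

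For $\lambda>0$, set $v(x_{h},x_{3}):=\tilde u(x_{h},\lambda x_{3})$, which is supported in $\R^{2}\times[0,1/\lambda]$ and still belongs to $H^{1}(\R^{3})$. Direct change of variables gives
\begin{equation*}
\|v\|_{L^{6}}^{6}=\lambda^{-1}\|u\|_{L^{6}}^{6},\qquad \|\nabla_{h}v\|_{L^{2}}^{2}=\lambda^{-1}\|\nabla_{h}u\|_{L^{2}}^{2},\qquad \|\partial_{3}v\|_{L^{2}}^{2}=\lambda\,\|\partial_{3}u\|_{L^{2}}^{2}.
\end{equation*}
Applying the Sobolev embedding $\dot H^{1}(\R^{3})\hookrightarrow L^{6}(\R^{3})$ to $v$ and squaring yields
\begin{equation*}
\lambda^{-1/3}\,\|u\|_{L^{6}}^{2}\;\leq\;C\Bigl(\lambda^{-1}\|\nabla_{h}u\|_{L^{2}}^{2}+\lambda\,\|\partial_{3}u\|_{L^{2}}^{2}\Bigr),
\end{equation*}
that is, after multiplying by $\lambda^{1/3}$,
\begin{equation*}
\|u\|_{L^{6}}^{2}\;\leq\;C\Bigl(\lambda^{-2/3}\,\|\nabla_{h}u\|_{L^{2}}^{2}+\lambda^{4/3}\,\|\partial_{3}u\|_{L^{2}}^{2}\Bigr).
\end{equation*}

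The last step is to choose $\lambda$ in terms of the free parameter $\kappa$. Selecting $\lambda=\kappa^{3/2}$ gives $\lambda^{-2/3}=\kappa^{-1}$ and $\lambda^{4/3}=\kappa^{2}$, so that taking square roots and using $\sqrt{a+b}\leq\sqrt a+\sqrt b$ yields the claimed bound \eqref{e.estkappa}. The only subtle point is the justification of the zero extension; this is routine since $u\in H^{1}_{0}(\Omega)$ means precisely that $u$ is the limit in $H^{1}$ of compactly supported smooth functions in $\Omega$, so no boundary traces appear and no main obstacle arises. The argument is entirely scaling-based, which is why the power $\kappa^{-1/2}$ of the horizontal norm and the power $\kappa^{1}$ of the vertical norm are uniquely determined by dimensional analysis.
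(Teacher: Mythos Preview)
Your proof is correct and follows essentially the same strategy as the paper: extend by zero to $\R^3$, apply an anisotropic dilation, use the isotropic Sobolev embedding $\dot H^1(\R^3)\hookrightarrow L^6(\R^3)$, and undo the change of variables. The only cosmetic difference is that the paper rescales both horizontal and vertical variables via the volume-preserving map $(y_h,y_3)\mapsto (y_h/\kappa^{1/2},\kappa y_3)$, so the $L^6$ norm is unchanged and the parameter $\kappa$ appears directly, whereas you rescale only in $x_3$ and set $\lambda=\kappa^{3/2}$ at the end; the two are equivalent because the isotropic Sobolev inequality is invariant under overall dilations.
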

\begin{proof}[Proof of Lemma \ref{lem.ani}]
	Let $\kappa>0$ and $u\in H^1_0(\Omega)$. We first extend $u$ by zero on $\R^3\setminus\Omega$ and still denote the extended function by $u$.
	Now $u\in H^1(\R^3)$. We then consider the rescaled function 
	\begin{equation*}
	u_\kappa(y_h,y_3)=u\left(\frac{y_h}{\kappa^{\frac12}},\kappa y_3\right),\quad (y_h,y_3)\in\R^3.
	\end{equation*}
	By Sobolev's inequality \cite[estimate (II.3.7)]{Galdi} for the whole space, there exists a universal constant $C\in(0,\infty)$ such that 
	\begin{equation*}
	\|u_\kappa\|_{L^6(\R^3)}\leq C\|\nabla u_\kappa\|_{L^2(\R^3)}.
	\end{equation*}
	Estimate \eqref{e.estkappa} then follows by a change of variables and the fact that $u$ is zero outside the strip $\R^2\times (0,1)$,
	\begin{align*}
	\|u_\kappa\|_{L^6(\R^3)}=\ &\|u\|_{L^6(\Omega)},\\
	\|\nabla u_\kappa\|_{L^2(\R^3)}=\ &\kappa^{-\frac12}\|\nabla_hu\|_{L^2(\Omega)}+\kappa\|\partial_3u\|_{L^2(\Omega)}.
	\end{align*}
	This concludes the proof. 
\end{proof}

\subsubsection{Conclusion of the stability estimates} \label{sss:rel-entr_est}
Below we estimate every source term $I_j$ appearing in \eqref{e.entroest}, for $0<\ep\leq \ep_0$ ($\ep_0$ is given by Lemma \ref{lemmaestE}). 
For the terms $I_1, I_2, I_3, I_5$ and $I_6$ we need to treat separately the cases $\gamma\geq2$ and $3/2\leq\gamma<2$, since we use different estimates, whereas
the terms $I_4, I_8, I_9, I_{10}$ and $I_{11}$ can be controlled in the same way for any $\gamma\geq3/2$. The term $I_7$ is more intricate; it is written as a sum of five terms: for some of them,
we need to distinguish again the case $\gamma\geq2$ and $3/2\leq \gamma<2$.\\ 
The easiest terms to handle are $I_3,\ I_4,\ I_6$ and $I_9$. The terms $I_1$ and $I_2$ are combined with the Coriolis term $I_5$;
For the remaining part of $I_5$, we rely on Hardy's inequality, which is also useful to deal with $I_7$, $I_8$, $I_{10}$ and $I_{11}$. The basic idea, borrowed from \cite{B-D-GV}, is that,
whenever there is a boundary layer term $G^{bl}(\tfrac{x_3}\ep)$, 
we gain one additional $\ep$ by using the decay of $G^{bl}$ in $\zeta$.
The price to pay is a $\d_3 $ derivative on $\de u^\ep$, which however can be swallowed by the third term in the left hand side of \eqref{e.entroest}.\\
For every term, we decompose $u^\ep_{app}$ according to \eqref{e.ansatzfinal}. The terms which require more care are those of order $O(1)$, which involve in general $u_{0,h}$ and
$u_{0,h}^{bl}$, except for $I_7$ where the product $u^\ep_{app}\cdot\nabla u^\ep_{app}$ also involves $u_{1,3}$ and $u_{1,3}^{bl}$ at order $O(1)$.
For the terms which are not of order $O(1)$ the analysis can be always reduced to the case $I_3, I_4, I_6, I_9$, and the same estimates are used.

For every term involving a boundary layer, one has to equally consider the top and bottom boundary
layers; again, for simplicity, we focus on the boundary layer at the bottom only. In the computations below, $U$ and $U^{bl}$
generically denote remainder terms in the expansion for $u^\ep_{app}$
or its derivatives. The definition of these remainder terms may change from the estimate of one $I_i$ to another $I_j$.

First, we deal with the terms for which estimates hold for any $\gamma$.

\noindent\underline{Term $I_4$.} We start by considering $I_4$, when restricted to the essential set. Using \eqref{eq:def-P}, 
the assumptions on the pressure function and the fact that $\left[|\delta\rho^\veps|\right]_{\mathrm{ess}}$ is uniformly bounded, we can estimate
\begin{align*}
\left|\frac{1}{\veps^2}\int_\Omega\nabla\cdot u^\veps_{app}\,\left[P(\rho^\veps,\rho^\veps_{app})\right]_{\mathrm{ess}}\right|\,&\leq\,\frac{1}{\veps^2}\,\left\|\nabla\cdot u^\veps_{app}\right\|_{L^\infty}\,
\left\|[\de\rho^\veps]_{\mathrm{ess}}\right\|_{L^2}^2 \\
&\leq\,C\,\veps\,\frac{1}{\veps^2}\,\int_\Omega E\left(\rho^\veps,\rho^\veps_{app}\right)\,,
\end{align*}
where we have used also that $\nabla\cdot u^\veps_{app}\,=\,\veps\,\left(\nabla\cdot u_1+\nabla_h\cdot u^{bl}_{1,h}\right)$.\\
Let us consider the integral over the residual set. By \eqref{eq:def-P} again, we have
$[P]_{\mathrm{res}}\,=\,\left[P(\rho^\veps)-P(\rho^\veps_{app})\right]_{\mathrm{res}}-P'(\rho^\veps_{app})[\de\rho^\veps]_{\mathrm{res}}$.
The second term can be easily controlled, in view of the uniform boundedness of $\rho^\veps_{app}$ and the $L^1$ estimate in \eqref{est:d-rho_res}. For the first term,
we use decomposition \eqref{eq:Omega_res}: when $\rho^\veps$ is bounded, the same argument as above applies. On the set $\{\rho^\veps\geq\oline \rho +\sigma\}$, instead, we use
hypothesis \eqref{hyp:p}, the uniform
boundedness of $\rho^\veps_{app}$ and the controls in \eqref{est:rho_res} to get 
\begin{align*}
&\frac{1}{\veps^2}\int_\Omega|\nabla\cdot u^\veps_{app}|\,\left|P(\rho^\veps)-P(\rho^\veps_{app})\right|\,\mathbf 1_{\{\rho^\veps\geq\oline \rho +\sigma\}}\,\leq\,
\frac{C}{\veps}\int_\Omega\left|P(\rho^\veps)-P(\rho^\veps_{app})\right|\,\mathbf{1}_{\{\rho^\veps\geq\oline \rho +\sigma\}} \\
&\qquad\qquad\qquad\qquad\qquad\qquad \leq\,\frac{C}{\veps}\,\left(\left\|\left[\rho^\veps\right]_{\mathrm{res}}\right\|^\g_{L^\g}\,+\,\mc L\left(\Omega_{\mathrm{res}}\right)\right)\;\leq\,C\,\veps\,.
\end{align*}
Putting everything together, we finally infer that
\begin{equation} \label{est:I_4}
\left|I_4\right|\,\leq\,C\,\veps\,+\,C\,\veps\,\frac{1}{\veps^2}\,\int_\Omega E\left(\rho^\veps,\rho^\veps_{app}\right)\,,
\end{equation}
where the last term will be handled by Gr\"onwall's lemma. 

\noindent\underline{Term $I_9$.} The control of $I_9$ is direct, as no $\rho^\veps$ or $\de\rho^\veps$ enter into play. 
We get
\begin{equation} \label{est:I_9}
\left|I_9\right|\,\leq\,\veps\,\left\|S^\ep\right\|_{L^2}\,\left\|\de u^\ep\right\|_{L^2}\,\leq\,C\,\ep\,K_2(t)\,,
\end{equation}where the function $K_2(t)=\|u^\ep(t)\|_{L^2} + \|u^\ep_{app}(t)\|_{L^2}$ belongs to $L^2([0,T))$ for all $T>0.$

\noindent\underline{Terms $I_8$, $I_{10},$ and $I_{11}$.} 
We deal with $I_8$ using Hardy's inequality. 
This gives
\begin{align}
|I_8|\,&=\,\ep\left|\int_{\Omega}\frac{x_3}\ep\, S^{bl}\left(\frac{x_3}{\ep}\right)\,\cdot\frac{\de u^\ep}{x_3}\right| \label{est:I_8} \\ 
&\leq\,C_\de\,\ep\,\,\left\|\zeta\, S^{bl}\right\|_{L^2}^2\,+\,\de\,\ep \,\left\|\partial_3\de u^\ep\right\|_{L^2}^2\, \leq \,C_\de\,\ep^2\,+\,\de\ep\,\left\|\partial_3\de u^\ep\right\|_{L^2}^2\,, \nonumber
\end{align}
for some small $\de>0$, to be chosen later.
The same holds for $I_{10}$ and $I_{11}$: since $\d_3\oline\rho$ is uniformly bounded, we have
\begin{align}
|I_{10}|=&	\,\ep\left|\int_\Omega \frac{x^2_3}{\ep^2}\left(\int_{0}^{1} \,\partial_3\oline\rho(s\,x_3)\,ds\right)(u^{bl}_{0,h,b})^\perp\cdot \frac{\de u_h^\ep}{x_3}\right| \label{est:I_10} \\
\leq &\, C_\de\,\ep \,\left\|\zeta^2\, u^{bl}_{0,h,b}\right\|^2_{L^2}\,+\,\de\ep\,\left\|\partial_3\de u^\ep_h\right\|^2_{L^2} \,\leq \, C_\de\,\ep^2\,+\,\de\ep\left\|\partial_3\de u^\ep_h\right\|^2_{L^2}
\nonumber \\
|I_{11}|=&	\,\ep\left|\int_\Omega \frac{(1-x_3)^2}{\ep^2}\left(\int_{0}^{1} \,\partial_3\oline\rho(1-s(1-x_3))\,ds\right)(u^{bl}_{0,h,t})^\perp\cdot \frac{\de u_h^\ep}{1-x_3}\right| \label{est:I_11}\\
\leq &\,C_\de\ep \left\|\eta^2\, u^{bl}_{0,h,b}\right\|^2_{L^2}\,+\,\de\ep\,\left\|\partial_3\de u^\ep_h\right\|^2_{L^2}
\leq \,C_\de\,\ep^2 \,+\,\de\ep\left\|\partial_3\de u^\ep_h\right\|^2_{L^2}\,. \nonumber
\end{align}
In the estimates above, we have used the fact that the terms
$\left\|\zeta\, S^{bl}\right\|_{L^2}^2,\,\left\|\zeta^2\, u^{bl}_{0,h,b}\right\|^2_{L^2}$ and $\left\|\eta^2\, u^{bl}_{0,h,t}\right\|^2_{L^2}$ are $O(\ep^2).$

We consider now the terms whose bounds must be treated differently if $\gamma\geq 2$ or $3/2\leq\gamma<2$.

\noindent\underline{Term $I_3$.} First of all, observe that $\left\|R^{bl}(x_3/\ep)\right\|^2_{L^2_{x}}=O(\ep)$. 
Thus, we can estimate
\begin{align*}
\frac{1}{\veps}\left|\int_\Omega H''(\rho^\veps_{app})\,\left[\de\rho^\veps\right]_{\mathrm{ess}}\,\left(R^{bl}+\veps\,R^\veps\right)\right|\,&\leq\,C\,\frac{1}{\veps}\,
\left\|R^{bl}+\veps\,R^\veps\right\|_{L^2}\,\left\|\left[\de\rho^\veps\right]_{\mathrm{ess}}\right\|_{L^2} \\
&\leq\,C\,\veps\,+\,\frac{C}{\veps^2}\,\int_\Omega E\left(\rho^\veps,\rho^\veps_{app}\right)\,,
\end{align*}
where we have used also \eqref{est:E_low-b}. As for the residual part, in view of \eqref{est:d-rho_res}, we can argue in exactly the same way if $\g\geq2$. If $3/2\leq\g<2$, instead, we put the
$L^\infty$ norm on the remainder terms and use the $L^1$ bound of \eqref{est:d-rho_res} to get
$$
\frac{1}{\veps}\left|\int_\Omega H''(\rho^\veps_{app})\,\left[\de\rho^\veps\right]_{\mathrm{res}}\,\left(R^{bl}+\veps\,R^\veps\right)\right|\,\leq\,C\,\veps\,.
$$
In any case, in the end we arrive at the bound
\begin{equation} \label{est:I_3}
\left|I_3\right|\,\leq\,C\,\veps\,+\,\frac{C}{\veps^2}\,\int_\Omega E\left(\rho^\veps,\rho^\veps_{app}\right)\,.
\end{equation}

\noindent\underline{Term $I_6$.} Once again, we use the decomposition of $\de\rho^\veps$ into essential and residual parts. For the term involving the essential part, 
thanks to Young's inequality and to the controls \eqref{est:u-L^2}
and \eqref{est:E_low-b}, one has
\begin{align*}
\left|\int_\Omega\left[\de\rho^\veps\right]_{\mathrm{ess}}\,\d_tu^\veps_{app}\cdot\de u^\veps\right|\,&\leq\,\left\|\left[\de\rho^\veps\right]_{\mathrm{ess}}\right\|_{L^2}\,
\left\|\d_tu^\veps_{app}\right\|_{L^\infty}\,\left\|\de u^\veps\right\|_{L^2} \\
&\leq\,C\,\veps^2\,K_1(t)\,+\,\frac{1}{\veps^2}\,E\left(\rho^\veps,\rho^\veps_{app}\right)\,,
\end{align*}
where the function $K_1\,=\,\|u^\veps\|_{L^2}^2\,+\,\|u^\veps_{app}\|_{L^2}^2$ belongs to $L^1\left([0,T)\right)$ for all $T>0$.\\
Next, let us consider the term involving the residual part: when $\g\geq2$, we can argue exactly as above,
in view of \eqref{est:d-rho_res}. If instead 
$3/2\leq\g<2$, we start by writing
$$
\int_\Omega\left[\de\rho^\veps\right]_{\mathrm{res}}\,\d_tu^\veps_{app}\cdot\de u^\veps\,=\,
\int_\Omega\left[\rho^\veps\right]_{\mathrm{res}}\,\d_tu^\veps_{app}\cdot\de u^\veps\,-\,\int_\Omega\left[\rho^\veps_{app}\right]_{\mathrm{res}}\,\d_tu^\veps_{app}\cdot\de u^\veps\,.
$$
For the second term, we use the uniform boundedness of $\rho^\veps_{app}$ and estimate \eqref{est:rho_res} to gather, for some function $K_2\in L^2\left([0,T)\right)$ for all $T>0$, the inequality
$$
\left|\int_\Omega\left[\rho^\veps_{app}\right]_{\mathrm{res}}\,\d_tu^\veps_{app}\cdot\de u^\veps\right|\,\leq\,C\,\left\|\de u^\veps\right\|_{L^2}\,\left(\mc L(\Omega_{\mathrm{res}})\right)^{1/2}\,\leq\,
C\,\veps\,K_2(t)\,.
$$
For the term involving $\left[\rho^\ep\right]_{\rm res}$, we use decomposition \eqref{eq:Omega_res} for the residual set. The integral over the first set can be treated exactly as just done for
$\rho^\veps_{app}$ (because $\rho^\veps$ is uniformly bounded therein). Concerning the integral over the second set, we have
\begin{align*}
\left|\int_\Omega\rho^\veps\,\mathbf 1_{\{\rho^\veps\geq\oline \rho +\sigma\}}\,\d_tu^\veps_{app}\cdot\de u^\veps\right|\,&\leq\,C\,\left\|\sqrt{\rho^\veps}\,\de u^\veps\right\|_{L^2}\,
\left(\int_\Omega\rho^\veps\,\mathbf1_{\{\rho^\veps\geq\oline \rho +\sigma}\}\right)^{\!1/2} \\
&\leq\,C\,\left\|\sqrt{\rho^\veps}\,\de u^\veps\right\|^2_{L^2}\,+\,C\,\veps^2\,,
\end{align*}
since the last integral in the first line can be bounded by the integral over the residual set, for which we can use \eqref{est:rho_res}.\\
Let us introduce the following notation: we set $\delta_{2^-}(\g)\,=\,1$ if $3/2\leq\g<2$, $\delta_{2^-}(\g)\,=\,0$ otherwise. 
In the end, from the previous computations we get
\begin{align}
\left|I_6\right|\,&\leq\,C\,\veps\,\left(\veps\,K_1(t)\,+\,\de_{2^-}(\g)\,K_2(t)+\de_{2^-}(\g)\,\veps\right) \label{est:I_6} \\
&\qquad\qquad\qquad +\,C\left(\frac{1}{\veps^2}\,\int_\Omega E\left(\rho^\veps,\rho^\veps_{app}\right)+
\de_{2^-}(\g)\,\left\|\sqrt{\rho^\veps}\,\de u^\veps\right\|^2_{L^2}\right)\,. \nonumber
\end{align}

\noindent\underline{Terms $I_1$, $I_2$ and $I_5$.}
Terms $I_1$, $I_2$ and $I_5$ have to be combined together, enabling to see a cancellation at the highest order in $\ep$. Such a cancellation is already a key point in \cite{F-N_CPDE}.
After setting $U\,:=\,u^\ep_{app}-(u_{0,h}+u_{0,h}^{bl},0)$, we can write
\begin{align*}
I_1 + I_2+I_5\,=&\,\frac1{\ep^2}\int_{\Omega} \de \rho^\ep \,\nabla G \cdot \de u^\ep\,-\,\frac1{\ep^2}\int_{\Omega}H''(\rho^\ep_{app})\,\partial_3\oline\rho\, \de u^\ep_3\,\de\rho^\ep \\
&-\,\frac1{\ep}\int_{\Omega}H''(\rho^\ep_{app})\,\nabla\rho_{1}\cdot \de u^\ep\,\de\rho^\ep\,-\,
\frac1\ep\int_{\Omega}\de\rho^\ep\,\frac{P'(\oline\rho)}{\oline\rho}\,(\nabla_h^\perp\rho_1)^\perp\cdot\de u^\ep_h \\
&\,-\,\frac1\ep\int_{\Omega}\de\rho^\ep\,(u_{0,h}^{bl})^\perp\cdot\de u^\ep_h\,-\,\int_{\Omega}\de\rho^\ep\,e_3\times U(x_h,x_3,\tfrac{x_3}{\ep},\tfrac{1-x_3}{\ep},t)\cdot\de u^\ep\,.
\end{align*}
Notice that $H''(\oline\rho)=\frac{P'(\oline\rho)}{\oline\rho}$
and $(\nabla_h^\perp\rho_1)^\perp=-\nabla_h\rho_1$. Moreover, from \eqref{eq:bar-rho} we get
\begin{equation*}
\oline\rho\, \nabla G = P'(\oline\rho)\,\nabla \oline\rho\,.
\end{equation*}
Therefore, we find
\begin{align*}
I_1 + I_2+I_5=&-\frac1{\ep^2}\int_{\Omega}\left(H''(\rho^\ep_{app}) - H''(\oline\rho)\right)\partial_3\oline\rho\de u^\ep_3\de\rho^\ep-
\frac1{\ep}\int_{\Omega} H''(\oline\rho)\partial_3\rho_1\de u^\ep_3\de\rho^\ep \\
&-\frac1{\ep}\int_{\Omega}\left(H''(\rho^\ep_{app})-H''(\oline\rho)\right)\nabla\rho_{1}\cdot\de u^\ep\de\rho^\ep-\frac1\ep\int_{\Omega}\de\rho^\ep(u_{0,h}^{bl})^\perp\cdot\de u^\ep_h\\
&-\int_{\Omega}\de\rho^\ep U_h^\perp(x_h,x_3,\tfrac{x_3}{\ep},\tfrac{1-x_3}{\ep},t)\cdot\de u^\ep_h\,=\,J_1+J_2+J_3+J_4+J_5\,.
\end{align*}
Using a Taylor expansion for $h(z)=H''(z)$
with integral remainder, we can write
\begin{align*}
J_1\,+\,J_2=&-\frac1{\ep}\int_{\Omega}\left(h'(\oline\rho)\,\rho_1\,\partial_3\oline\rho\,+h(\oline\rho)\,\partial_3\rho_1\right)\de u^\ep_3\,\de\rho^\ep \, \\
&-\,\int_{\Omega}\rho_1^2\left(\int_{0}^{1}(1-s)h''(\oline\rho +s\ep\rho_1)ds\right)\partial_3\oline\rho \,\de u^\ep_3\,\de\rho^\ep \,\\=
&-\,\int_{\Omega}\rho_1^2\left(\int_{0}^{1}(1-s)h''(\oline\rho +s\ep\rho_1)ds\right)\partial_3\oline\rho\,\de u^\ep_3\,\de\rho^\ep\,,
\end{align*}
where we have used \eqref{e.qindepx3} in the last equality. Since $\rho_1$, $\oline\rho$ and $\partial_3 \oline\rho$ are $L^\infty_{t,x}$, in view of \eqref{hyp:p}
the control of $J_1+J_2$ becomes similar to the one exhibited for $I_6$.
In the same way, after noticing that $\nabla_h\rho_1$ and $\veps^{-1}\left(H''(\rho^\ep_{app})-H''(\oline\rho)\right)$ are uniformly bounded in time and space, 
the control of $J_3$ is obtained. Then, $J_1 + J_2$ and $J_3$ verify estimate \eqref{est:I_6}.
The same can be said about $J_5$, because also $U_h$ belongs to $L^\infty_{t,x}$.\\
Therefore, it remains to deal with $J_4$, for which we rely on Hardy's inequality. More precisely, let us start, as usual, by dealing with the essential part: we have
\begin{align*}
\left|\frac1\ep\int_{\Omega}\left[\de\rho^\ep\right]_{\mathrm{ess}}\,(u_{0,h}^{bl})^\perp\cdot\de u^\ep_h\right|\,&=\,
\left|\int_{\Omega}\left[\de\rho^\ep\right]_{\mathrm{ess}}\,\frac{x_3}{\veps}\,(u_{0,h}^{bl})^\perp\cdot\frac{\de u^\ep_h}{x_3}\right| \\
&\leq\,\left\|\left[\de\rho^\veps\right]_{\mathrm{ess}}\right\|_{L^2}\,\left\|\z\,u^{bl}_{0,h}(t,x_h,\z)\right\|_{L^\infty_{t,x,\z}}\,\left\|\d_3 \de u^\veps_h\right\|_{L^2} \\
&\leq\,\frac{C}{\veps^2}\,\int_\Omega E\left(\rho^\veps,\rho^\veps_{app}\right)\,+\,C\,\veps^2\,\left\|\z\,u^{bl}_{0,h}\right\|^2_{L^\infty_{t,x,\z}}\,\left\|\d_3 \de u^\veps_h\right\|^2_{L^2}\,.
\end{align*}
Notice that, for $\veps$ small enough, the second term can be swallowed by the third term in the left hand side of \eqref{e.entroest}.
As for the control of the residual part, suppose that $\g\geq2$ for a while: in this case, we can argue in the exact same way and obtain, in view of \eqref{est:d-rho_res}, that
\begin{align*}
\left|\frac1\ep\int_{\Omega}\left[\de\rho^\ep\right]_{\mathrm{res}}\,(u_{0,h}^{bl})^\perp\cdot\de u^\ep_h\right|\,&\leq\,
\left\|\left[\de\rho^\veps\right]_{\mathrm{res}}\right\|_{L^2}\,\left\|\z\,u^{bl}_{0,h}\right\|_{L^\infty_{t,x,\z}}\,\left\|\d_3 \de u^\veps_h\right\|_{L^2} \\
&\leq\,\frac{C}{\veps^2}\,\int_\Omega E\left(\rho^\veps,\rho^\veps_{app}\right)\,+\,C\,\veps^2\,\left\|\z\,u^{bl}_{0,h}\right\|^2_{L^\infty_{t,x,\z}}\,\left\|\d_3 \de u^\veps_h\right\|^2_{L^2}\,.
\end{align*}
The case $3/2\leq\g<2$ is slightly more involved. 
The control over $\{0<\rho^\ep\leq \oline \rho - \sigma \}$ does not present any special difficulty, since we have uniform bounds for $\rho_\veps$
(and obviously for $\rho^\veps_{\mathrm app}$) on that set: then,
we can argue as for controlling the essential part. Hence, let us focus on $\{\rho^\ep \geq \oline \rho +\sigma\}$. First of all, using that $\sqrt{a+b}\leq \sqrt{a}+\sqrt{b}$, we notice that
\begin{equation}
\begin{aligned}
& \left|\frac1\ep\int_{\Omega}\left[\de\rho^\ep\right]_{\mathrm{res}}\,(u_{0,h}^{bl})^\perp\cdot\de u^\ep_h\right|\,\leq\,
\frac1\ep\int\sqrt{\de\rho^\ep}\, \mathbf{1}_{\{\rho^\ep \geq\oline \rho +\sigma\}}\;\left|u_{0,h}^{bl}\right|\;\sqrt{\rho_\veps}\,\left|\de u^\ep_h\right|\\
&\qquad\qquad\qquad\qquad\qquad\qquad\quad+\,
\frac1\ep\int\sqrt{\de\rho^\ep}\, \mathbf{1}_{\{\rho^\ep \geq \oline \rho +\sigma\}}\;\left|u_{0,h}^{bl}\right|\;\sqrt{\rho^\veps_{app}}\,\left|\de u^\ep_h\right|\,. 
\end{aligned}
\label{est:J_2-res}
\end{equation}
For the first term in the right hand side of \eqref{est:J_2-res}, we proceed in the following way:
\begin{align*}
\int_{\Omega}\sqrt{\de\rho^\ep}\, \mathbf{1}_{\{\rho^\ep \geq \oline \rho +\sigma\}}\;&\left|u_{0,h}^{bl}\right|\;\sqrt{\rho_\veps}\,\left|\de u^\ep_h\right|\\&\leq\,
\left\|\left[\de\rho^\ep\right]_{\mathrm{res}}\right\|_{L^\g}^{1/2}\,\left\|u_{0,h}^{bl}\right\|_{L^\infty}\,\left\|\sqrt{\rho_\veps}\,\de u^\ep_h\right\|_{L^2}\,\left(\mc L(\Omega_{\mathrm{res}})\right)^{1/q}\,,
\end{align*}
where $1/(2\g)+1/2+1/q=1$. Using \eqref{e.entrocontrol} and \eqref{est:E_low-b}, we deduce that
\begin{align*}
& \frac{1}{\veps}\int_{\Omega}\sqrt{\de\rho^\ep}\, \mathbf{1}_{\{\rho^\ep \geq \oline \rho +\sigma\}}\;\left|u_{0,h}^{bl}\right|\;\sqrt{\rho_\veps}\,\left|\de u^\ep_h\right| \\
&\qquad\qquad  \leq\,\veps^{1/\g+2/q-1}\,\left(\frac{1}{\veps^2}\,E\left(\rho^\veps,\rho^\veps_{app}\right)\right)^{1/(2\g)+1/q}\,\left\|u_{0,h}^{bl}\right\|_{L^\infty}\,
\left\|\sqrt{\rho_\veps}\,\de u^\ep_h\right\|_{L^2} \\
&\qquad\qquad =\,\left(\frac{1}{\veps^2}\,E\left(\rho^\veps,\rho^\veps_{app}\right)\right)^{1/2}\,\left\|u_{0,h}^{bl}\right\|_{L^\infty}\,
\left\|\sqrt{\rho_\veps}\,\de u^\ep_h\right\|_{L^2}\,.
\end{align*}
After applying Young's inequality, this term can be controlled by Gr\"onwall's lemma in the final estimate.
For the last term in \eqref{est:J_2-res}, we argue in the following way:
\begin{align*}
&\int\sqrt{\de\rho^\ep}\mathbf{1}_{\{\rho^\ep \geq \oline \rho +\sigma\}}\left|u_{0,h}^{bl}\right|\sqrt{\rho^\veps_{app}}\left|\de u^\ep_h\right|\,=\,\veps
\int\sqrt{\de\rho^\ep}\mathbf{1}_{\{\rho^\ep \geq \oline \rho +\sigma\}}\left|\frac{x_3}{\ep}\,u_{0,h}^{bl}\right|\sqrt{\rho^\veps_{app}}\,\left|\frac{\de u^\ep_h}{x_3}\right| \\
&\leq\veps\int\left(\sqrt{\rho^\ep}\mathbf{1}_{\{\rho^\ep \geq \oline \rho +\sigma\}}\left|\frac{x_3}{\ep}u_{0,h}^{bl}\right|\sqrt{\rho^\veps_{app}}\left|\frac{\de u^\ep_h}{x_3}\right|+
\rho^\veps_{app} \mathbf{1}_{\{\rho^\ep \geq \oline \rho +\sigma\}}\left|\frac{x_3}{\ep}u_{0,h}^{bl}\right|\left|\frac{1}{x_3}\de u^\ep_h\right|\right) \\
&\leq\,\veps\,\left\|\d_3\de u^\ep_h\right\|_{L^2}\,\left\|\z\,u_{0,h}^{bl}\right\|_{L^\infty}\,\times \\
&\qquad\left(\left\|\rho^\ep_{app}\right\|_{L^\infty}^{1/2}\,\left\|\left[\rho^\ep\right]_{\mathrm{res}}\right\|_{L^\g}^{1/2}\,\left(\mc L(\Omega_{\mathrm{res}})\right)^{1/q}\,+\,
\left\|\rho^\ep_{app}\right\|_{L^\infty}\left(\mc L(\Omega_{\mathrm{res}})\right)^{1/2}\right)\,,
\end{align*}
where $q$ is defined as above. Notice that, in view of \eqref{est:rho_res}, we have
$\left\|\left[\rho^\ep\right]_{\mathrm{res}}\right\|_{L^\g}\,=\,O\left(\veps^{2/\g}\right)$ and $\mc L(\Omega_{\mathrm{res}})\,=\,O\left(\veps^2\right)$. Therefore, we finally find
\begin{align*}
\frac{1}{\veps}\int\sqrt{\de\rho^\ep}\, \mathbf{1}_{\{\rho^\ep \geq\oline \rho +\sigma\}}\;\left|u_{0,h}^{bl}\right|\;\sqrt{\rho^\veps_{app}}\,\left|\de u^\ep_h\right|\,\leq\,C_\de\,\veps\,+\,
\de\,\veps\,\left\|\d_3\de u^\ep_h\right\|_{L^2}^2\,.
\end{align*}
In the end, we deduce the following control:
\begin{equation}\label{est:I_1+I_2+I_5}
\begin{aligned}
&\left|I_1+I_2+I_5\right|\,\leq\,\frac{C}{\veps^2}\,\int_\Omega E\left(\rho^\veps,\rho^\veps_{app}\right)\,+\,C\delta_{2^-}(\gamma)\|\sqrt{\rho^\ep}\de u^\ep_h\|^2_{L^2}\\
&\qquad+
C\ep(\ep K_1(t) + \delta_{2^-}(\gamma)K_2(t) + \delta_{2^-}(\gamma))+(C\ep^2+\delta_{2^-}(\gamma)\de\veps\,)\left\|\d_3\de u^\veps_h\right\|^2_{L^2}\,,
\end{aligned}
\end{equation}
where the last term in the right hand side can be absorbed into the left hand side of the relative entropy inequality \eqref{e.entroest}.

Finally, let us deal with $I_7$.

\noindent\underline{Term $I_7$.} 
We start by considering the following decomposition:
\begin{align*}
&I_7=-\int_{\Omega}\de\rho^\ep u^\ep_{app}\cdot\nabla u^\ep_{app}\cdot\de u^\ep-\frac1\ep\int_{\Omega}\rho^\ep\de u^\ep_3\d_\zeta u_{0,h}^{bl}(\tfrac{x_3}{\veps})\cdot\de u^\ep_h \\
&-\int_{\Omega}\rho^\ep\de u^\ep_3\d_\zeta u_1^{bl}(\tfrac{x_3}{\veps})\cdot\de u^\ep-\int_{\Omega}\rho^\ep\de u^\ep_h\cdot\nabla_h (u_{0,h} + u^{bl}_{0,h})\cdot\de u^\ep_h-
\ep\int_{\Omega}\rho^\ep\de u^\ep\cdot U\cdot\de u^\ep \\
&\quad=\,J_6\,+\,J_7\,+\,J_8\,+\,J_9\,+\,J_{10}\,,
\end{align*}
where $\ep U=  \ep \left(\nabla u_1+\binom{\nabla_h}{0} u_1^{bl} \right)$ is the remainder term in the expansion for $\nabla u^\ep_{app}$.  \\
The first term $J_6$ can be handled as done with $I_6$. Indeed, one has
\begin{equation*}
u^\ep_{app}\cdot\nabla u^\ep_{app}\,=\,\left(u_{0,h}+u_{0,h}^{bl}\right)\cdot\nabla_h (u_{0,h} + u^{bl}_{0,h})\,+\,\left(u_{1,3}+u_{1,3}^{bl}\right)\,\d_\zeta u_{0,h}^{bl}(\tfrac{x_3}\ep)\,+\,h.o.t.\,,
\end{equation*}
where $h.o.t.$ represents higher order terms in $\veps$. Then $u^\ep_{app}\cdot\nabla u^\ep_{app}$
is uniformly bounded in $L^\infty_{t,x}$. Therefore, $J_6$ verifies an inequality similar to \eqref{est:I_6} above.\\
The terms $J_8$, $J_9$ and $J_{10}$ can be simply bounded as follows:
\begin{align*}
\left|J_8\right|\,&\leq\,C\,\left\|\d_\z u^{bl}_{1}\right\|_{L^\infty_{t,x}}\,\left\|\sqrt{\rho^\ep}\,\de u^\ep\right\|_{L^2}^2, \\
\left|J_9\right|\,&\leq\,C\,\left\|\nabla_h (u_{0,h} + u^{bl}_{0,h})\right\|_{L^\infty_{t,x}}\,\left\|\sqrt{\rho^\ep}\,\de u^\ep\right\|_{L^2}^2, \\
\left|J_{10}\right|\,&\leq\,C\,\ep\,\|U\|_{L^\infty_{t,x}}\,\left\|\sqrt{\rho^\ep}\,\de u^\ep\right\|_{L^2}^2\,.
\end{align*}
We remark that these estimates holds for $\gamma\geq 3/2$.\\
We now focus on the remaining term $J_7$, which is the most difficult one to deal with. The difficulties come from the need to gain smallness in $\ep$
(by using Hardy's inequality as above), from the low integrability of the residual part and from the fact that this term is quadratic in $\delta u^\ep$. We first decompose 
\begin{equation} \label{eq:rho-dec}
\rho^\ep=\left[\de\rho^\ep\right]_{\mathrm{ess}}+\left[\de\rho^\ep\right]_{\mathrm{res}} + \rho^\ep_{app}.
\end{equation}
The essential part is easy to bound: owing to the boundedness of $\rho^\veps$ on that set and to an application of Hardy's inequality, 
we get
\begin{align*}
\frac{1}{\veps}\left|\int_{\Omega}\left[\de\rho^\ep\right]_{\mathrm{ess}}\,\de u^\ep_3\,\d_\zeta u_{0,h}^{bl}(\tfrac{x_3}{\veps})\cdot\de u^\ep_h\right|\,&\leq\,C\,\veps\,
\left\|\zeta^2\,\d_\z u_{0,h}^{bl}\right\|_{L^\infty_{t,x}}\,\left\|\frac{\de u^\ep_3}{x_3}\right\|_{L^2}\,\left\|\frac{\de u^\ep_h}{x_3}\right\|_{L^2} \\
&\leq\,C\,\veps\,\left\|\d_3\de u^\ep_3\right\|_{L^2}\,\left\|\partial_3\de u^\ep_h\right\|_{L^2} \\
&\leq\,C\,\ep^{3/2}\,\left\|\partial_3\de u^\ep_h\right\|_{L^2}^2\,+\,C\,\veps^{1/2}\,\left\|\partial_3\de u^\ep_3\right\|_{L^2}^2\,.
\end{align*}
The control of the part involving $\rho^\ep_{app}$ is similar, so let us turn to the residual part. Two different estimates are computed if $\gamma$ is larger or smaller than the critical exponent $2$.
For $\gamma\geq 2$, we write, for $\alpha \in(0,1)$ to be chosen later on,
\begin{equation*}
\delta u_3^\ep=(\delta u_3^\ep)^{1-\alpha}\frac{(\delta u_3^\ep)^{\alpha}}{x_3^\alpha}x_3^\alpha
\end{equation*} 
and then apply Sobolev's and Hardy's inequalities: this yields
\begin{align}\label{e.hardyu3}
\|(\delta u_3^\ep)^{1-\alpha}\|_{L^\frac{6}{1-\alpha}}\leq C\|\nabla\delta u_3^\ep\|_{L^2}^{1-\alpha}\quad\mbox{and}\quad\left\|\frac{(\delta u_3^\ep)^{\alpha}}{x_3^\alpha}\right\|_{L^\frac2{\alpha}}\leq C\|\partial_3\delta u_3^\ep\|_{L^2}^\alpha.
\end{align}
We use the same technique for $\de u^\ep_h$ with $ \beta\in (0,1)$. Then, choosing $\alpha, \beta$ such that
$$\alpha +\beta = \dfrac{1}{2}\,,$$
we have, for all $\de>0$ to be chosen later,
\begin{align*}
&\frac{1}{\veps}\left|\int_{\Omega}\left[\de\rho^\ep\right]_{\mathrm{res}}\,\de u^\ep_3\,\d_\zeta u_{0,h}^{bl}(\tfrac{x_3}{\veps})\cdot\de u^\ep_h\right|\\
=\ &\ep^{\alpha+\beta-1}\left|\int_{\Omega}\left[\de\rho^\ep\right]_{\mathrm{res}}\,(\de u^\ep_3)^{1-\alpha}\frac{(\de u^\ep_3)^{\alpha}}{x_3^\alpha}\,\frac{x_3^{\alpha+\beta}}{\ep^{\alpha+\beta}}\d_\zeta u_{0,h}^{bl}(\tfrac{x_3}{\veps})\cdot(\de u^\ep_h)^{1-\beta}\frac{(\de u^\ep_h)^{\beta}}{x_3^\beta}\,\right|\\
\leq\ &\ep^{-1/2}\|\left[\de\rho^\ep\right]_{\mathrm{res}}\|_{L^2}\|\nabla\delta u_3^\ep\|_{L^2}^{1-\alpha}\|\partial_3\delta u_3^\ep\|_{L^2}^\alpha\,\|\nabla\delta u_h^\ep\|_{L^2}^{1-\beta}\|\partial_3\delta u_h^\ep\|_{L^2}^\beta\,\|\z^{\alpha+\beta}\d_\zeta u_{0,h}^{bl}\|_{L^\infty}\\
\leq\ &C\ep^{1/2}\left(\frac{1}{\ep^2}\int_{\Omega}E(\rho^\ep,\rho^\ep_{app})\right)^{1/2}\|\nabla\delta u_3^\ep\|_{L^2}\|\nabla\delta u_h^\ep\|_{L^2}\\
\leq\ & \frac{C_\de}{\ep^2}\,K_1(t)\int_{\Omega}E(\rho^\ep,\rho^\ep_{app})\,+\,\de\,\ep\,\|\nabla_h\delta u_h^\ep\|^2_{L^2}\,+\,\de\,\ep\,\|\partial_3\delta u_h^\ep\|^2_{L^2}\,,
\end{align*}
where $K_1(t)=\|\nabla u^\ep_3(t)\|^2_{L^2}+\|\nabla u^\ep_{app,3}(t)\|^2_{L^2} $ belongs to $L^1\left([0,T)\right)$ for all $T>0$. In the second inequality we have used the lower bound
$$E(\rho^\ep(x,t),\rho^\ep_{app}(x,t))\geq c \ |\de\rho^\ep(x,t)|^2\,,$$
which comes from \eqref{e.entrocontrol} when $\gamma\geq 2$. \\ 
For $3/2\leq\gamma<2$, we use the same argument as in the case $\gamma\geq2$ for $\delta u_3^\ep$.
The control of $\delta u_h^\ep$, instead, is done via the anisotropic Sobolev embedding given of Lemma \ref{lem.ani}.
Hence, for $\alpha\in[0,1]$ such that 
$2-\frac{3}{\gamma}=\alpha,$ 
H\"older's inequality gives, using \eqref{e.hardyu3} for $\de u_3^\ep$ and \eqref{e.estkappa} for $\de u_h^\ep$ with $\kappa=\ep^{(-\frac12+\frac1\gamma)_+}$,
\begin{equation}
\begin{aligned}
&\frac{1}{\veps}\left|\int_{\Omega}\left[\de\rho^\ep\right]_{\mathrm{res}}\,\de u^\ep_3\,\d_\zeta u_{0,h}^{bl}(\tfrac{x_3}{\veps})\cdot\de u^\ep_h\right|\\
&=\ \ep^{\alpha-1}\left|\int_{\Omega}\left[\de\rho^\ep\right]_{\mathrm{res}}\,(\de u^\ep_3)^{1-\alpha}\frac{(\de u^\ep_3)^{\alpha}}{x_3^\alpha}\,\frac{x_3^{\alpha}}{\ep^{\alpha}}\d_\zeta u_{0,h}^{bl}(\tfrac{x_3}{\veps})\cdot\de u^\ep_h\right|\\
&\leq\ C\ep^{\alpha-1}\|\left[\de\rho^\ep\right]_{\mathrm{res}}\|_{L^\gamma}\|\nabla\delta u_3^\ep\|_{L^2}^{1-\alpha}\|\partial_3\delta u_3^\ep\|_{L^2}^\alpha\\
&\qquad\times\left(\kappa^{-\frac12}\|\nabla_h\de u_h^\ep\|_{L^2}+\kappa\|\partial_3\de u_h^\ep\|_{L^2}\right)\|\z^\alpha\d_\zeta u_{0,h}^{bl}\|_{L^\infty}\\
&\leq\ C\ep^{1-\frac1\gamma}\|\nabla\delta u_3^\ep\|_{L^2}\left(\kappa^{-\frac12}\|\nabla_h\de u_h^\ep\|_{L^2}+\kappa\|\partial_3 \de u_h^\ep\|_{L^2}\right)\\
&\leq\tfrac{\min(\mu,\lambda)}{10}\|\nabla\delta u_3^\ep\|_{L^2}^2+C(\mu,\lambda)\ep^{2-\frac2\gamma}\left(\kappa^{-1}\|\nabla_h\de u_h^\ep\|_{L^2}^2+\kappa^2\|\partial_3\de u_h^\ep\|_{L^2}^2\right)\\
&\leq\tfrac{\min(\mu,\lambda)}{10}\|\nabla\delta u_3^\ep\|_{L^2}^2+C(\mu,\lambda)\ep^{(\frac{5}{2}-\frac3\gamma)_-}\|\nabla_h\de u_h^\ep\|_{L^2}^2+C(\mu,\lambda)\ep^{1_+}\|\partial_3\de u_h^\ep\|_{L^2}^2.
\end{aligned}
\label{ineq32}
\end{equation}
Hence we can swallow the whole right hand side on condition that $\gamma>6/5$ (which is the case, since $\gamma\geq 3/2$) and $\ep$ is sufficiently small.  \\
To put it in a nutshell, we obtain the following bound on $J_7$:
\begin{equation*}
\begin{aligned}
|J_7|\,\leq\,&\frac{C_\delta}{\veps^2}K_1(t)\int_\Omega E\left(\rho^\veps,\rho^\veps_{app}\right) +\, 
(\delta \, \ep + C\ep^\frac32 + \delta_{2^-}(\gamma)C(\mu,\lambda)\ep^{1_+} )\,\left\|\partial_3\de u^\ep_h\right\|_{L^2}^2\,\\&
+\,\left(\delta\, \ep + \delta_{2^-}(\gamma) C(\mu,\lambda)\ep^{(\frac{5}{2}-\frac3\gamma)_-}\right)\|\nabla_h\de u_h^\ep\|_{L^2}^2\\
& +\,\left(\delta_{2^-}(\gamma)\tfrac{\min(\mu,\lambda)}{10} + C \ep^\frac12\right)\|\nabla\delta u_3^\ep\|_{L^2}^2\,.
\end{aligned}
\end{equation*}
Therefore, we finally get the following estimate for $I_7$:
\begin{equation}
\begin{aligned}
|I_7|\,\leq\,&C\veps\Big(\veps\,K_1(t)+\de_{2^-}(\g)K_2(t)+\de_{2^-}(\g)\veps\Big) +\frac{C +C_\delta K_1(t)}{\veps^2}\int_\Omega E\left(\rho^\veps,\rho^\veps_{app}\right) \\&
+\left(\de_{2^-}(\g)C + C_1+C_2\ep\right)\left\|\sqrt{\rho^\ep}\de u^\ep\right\|_{L^2}^2 \\
& + 
(\delta \ep + C\ep^\frac32 + \delta_{2^-}(\gamma)C(\mu,\lambda)\ep^{1_+} )\left\|\partial_3\de u^\ep_h\right\|_{L^2}^2\\&
+\left(\delta\, \ep + \delta_{2^-}(\gamma) C(\mu,\lambda)\ep^{(\frac{5}{2}-\frac3\gamma)_-}\right)\|\nabla_h\de u_h^\ep\|_{L^2}^2\\
& +\left(\delta_{2^-}(\gamma)\tfrac{\min(\mu,\lambda)}{10} + C \ep^\frac12\right)\|\nabla\delta u_3^\ep\|_{L^2}^2\,. 
\end{aligned}
\label{est:I_7}
\end{equation}

\begin{remark}
	The anisotropic Sobolev embedding in Lemma \ref{lem.ani} can be used to provide better estimates only for $\gamma$ small. For instance, in
	\eqref{est:J_2-res}, using Lemma \ref{lem.ani} we get a remainder term of order $\ep^\alpha$, with $0<\alpha<1$ for $ 3/2\leq \gamma<2 $ and $\alpha>1$ only for $\gamma< 12/11$,
	while by using the smallness of the Lebesgue measure of $\Omega_{\mathrm{res}}$ we get a remainder term of order $\ep$ for $ 3/2\leq\gamma<2 $.	\end{remark}

In the end, summing up our estimates, we get from \eqref{e.entroest} the following differential inequality: there exist functions  $C_1(t),\ C_2(t) \in L^1([0,T))$, and
constants $ C_3>0$ and $\ep_0\in(0,1)$, such that, for all $\ep\in(0,\ep_0)$, all $t\in(0,T)$ and all $\de>0$, one has
\begin{equation}\label{e.entroestfinal}
\begin{aligned}
&\frac d{dt}\left(\frac12\int_{\Omega}\rho^\ep|\de u^\ep|^2dx+\frac1{\ep^2}\int_{\Omega}E(\rho^\ep,\rho^\ep_{app})\,dx\right)\\&\qquad+\mu\int_{\Omega}|\nabla_h\de u^\ep|^2 dx
+\ep\int_{\Omega}|\partial_3\de u^\ep|^2 dx+\lambda\int_{\Omega}|\nabla\cdot\de u^\ep|^2dx\\
&\leq\ C_1(t)\left(\int_{\Omega}\rho^\ep|\de u^\ep|^2dx\,+\,
\frac{1}{\ep^2}\int_{\Omega}E(\rho^\ep,\rho^\ep_{app})\,dx\right)+\,\ep\, C_2(t) \\& \qquad+\left(\tfrac{\min(\mu,\lambda)}{10} + C \ep^{(\frac{5}{2}-\frac3\gamma)_-}\right)\|\nabla_h \de u^\ep\|^2_{L^2}\,+\, C_3\left(\delta\,\ep + \ep^{1_+}\right)\|\partial_3 \de u^\ep\|^2_{L^2}\\&\qquad +\,\left(\tfrac{\min(\mu,\lambda)}{10} + C\ep^\frac12\right)\|\partial_3 \de u^\ep_3\|^2_{L^2} .
\end{aligned}
\end{equation}
Let us stress that $C_1(t),\ C_2(t),\ C_3$ and $\ep_0$ do not depend on $\ep$. The quantities these constants depend on have been written explicitly in the computations above;
in particular, $C_1(t)$ and $C_2(t)$ contains the functions $K_1(t)$ and $K_2(t)$. \\
Choosing $\delta$ small enough and using the identity $\partial_3 \delta u_3^\ep= \nabla\cdot \delta u^\ep - \nabla_h \cdot \delta u^\ep_h$, the last three terms in \eqref{e.entroestfinal} can be swallowed in the left hand side. The estimate in Theorem \ref{prop.main} follows from Gr\"onwall's lemma.

\section*{Acknowledgement}

The three authors are partially supported by the project BORDS (ANR-16-CE40-0027-01) operated by the French National Research Agency (ANR).
The second and third authors are partially supported by the project SingFlows (ANR-18-CE40-0027) operated by the French National Research Agency (ANR).
The work of the second author is also partially supported by the LABEX MILYON (ANR-10-LABX-0070) of Universit\'e de Lyon, within the program ``Investissement d'Avenir'' (ANR-11-IDEX-0007).
The first and the third authors acknowledge financial support from 
the IDEX of the University of Bordeaux for the BOLIDE project. The first author is partially supported by the Starting Grant project \textquotedblleft Analysis of moving incompressible fluid interfaces\textquotedblright (H2020-EU.1.1.-639227) operated by the European Research Council (ERC).

{
\small

}


\begin{thebibliography}{xxx}

\bibitem{Aron} B. G. Aronson:
{\it Bounds for the fundamental solution of a parabolic equation}.
Bull. Amer. Math. Soc., {\bf 73} (1967), 890-896.


\bibitem{B-M-N_1996} A. Babin, A. Mahalov, B. Nicolaenko:
{\it Global splitting, integrability and regularity of $3$D Euler and Navier-Stokes equations for uniformly rotating fluids}.
European J. Mech. B Fluids {\bf 15} (1996), n. 3, 291-300.

\bibitem{B-M-N_1997} A. Babin, A. Mahalov, B. Nicolaenko:
{\it Regularity and integrability of $3$D Euler and Navier-Stokes equations for rotating fluids}.
Asymptot. Anal. {\bf 15}  (1997), n. 2, 103-150.

\bibitem{B-M-N_1999} A. Babin, A. Mahalov, B. Nicolaenko:
{\it Global regularity of $3$D rotating Navier-Stokes equations for resonant domains}.
Indiana Univ. Math. J. {\bf 48}  (1999), n. 3, 1133-1176.

\bibitem{B-C-D} H. Bahouri, J.-Y. Chemin and R. Danchin: 
{\it ``Fourier Analysis and Nonlinear Partial Differential Equations''}.
Grundlehren der Mathematischen Wissenschaften (Fundamental Principles of Mathematical Sciences), Springer, Heidelberg (2011).

\bibitem{EB-phd} E. Bocchi: 
{\it ``Compressible-incompressible transitions in fluid mechanics: waves-structures inter-action and rotating fluids''}.
Ph.D. thesis, University of Bordeaux (2019).


\bibitem{Bre-Bur} D. Bresch, C. Burtea:
{\it Global existence of weak solutions for the anisotropic compressible Stokes system}.
arXiv e-prints (2019).

\bibitem{Bre-Bur2} D. Bresch, C. Burtea:
{\it Weak Solutions for the Stationary Anisotropic and Nonlocal Compressible Navier-Stokes System}.
arXiv e-prints (2020).

\bibitem{B-D-GV} D. Bresch, B. Desjardins, D. G\'erard-Varet:
{\it Rotating fluids in a cylinder}.
Discrete Cont. Dyn. Syst., {\bf 11} (2004), n. 1, 47-82.

\bibitem{B-J} D. Bresch, P.-E. Jabin:
{\it Global existence of weak solutions for compressible Navier-Stokes equations: thermodynamically unstable pressure and anisotropic viscous stress tensor}.
Ann. of Math. (2), {\bf 188} (2018), n. 2, 577-684.

\bibitem{Char} F. Charve:
{\it Global Well-Posedness and Asymptotics for a Geophysical Fluid System}. Comm. Par. Diff. Eq.,  {\bf 29} (2004), n. 11-12, 1919-1940.

\bibitem{Ch1995} J.-Y. Chemin: {\it ``Fluides parfaits incompressibles''}.
Ast\'erisque, {\bf 230} (1995).

\bibitem{C-D-G-G_2002} J.-Y. Chemin, B. Desjardins, I. Gallagher, E. Grenier:
{\it Ekman boundary layers in rotating fluids}.
ESAIM: Control Optim. Calc. Var., {\bf 8} (2002), 441-466.

\bibitem{C-D-G-G} J.-Y. Chemin, B. Desjardins, I. Gallagher, E. Grenier:
{\it ``Mathematical geophysics. An introduction to rotating fluids and the Navier-Stokes equations''}.
Oxford Lecture Series in Mathematics and its Applications, Oxford University Press, Oxford (2006).

\bibitem{CR} B. Cushman-Roisin:
{\it ``Introduction to Geophysical Fluid Dynamics''}.
Prentice-Hall, Englewood Cliffs (1994).

\bibitem{D-F-P} R. Danchin, F. Fanelli, M. Paicu:
{\it A well-posedness result for viscous compressible fluids with only bounded density}.
Anal. PDE, {\bf 13} (2020), n. 1, 275-316.

\bibitem{DeA-F} F. De Anna, F. Fanelli:
{\it Global well-posedness and long-time dynamics for a higher order Quasi-Geostrophic type equation}.
J. Funct. Anal., {\bf 274} (2018), n. 8, 2291-2355.

\bibitem{Evans} L. C. Evans:
{\it ``Partial differential equations''}.
Graduate Studies in Mathematics, American Mathematical Society, Providence, RI (2010).

\bibitem{F-G-GV-N} E. Feireisl, I. Gallagher, D. G\'erard-Varet, A. Novotn\'y:
{\it Multi-scale analysis of compressible viscous and rotating fluids}.
Comm. Math. Phys. {\bf 314} (2012), n. 3, 641-670.

\bibitem{F-G-N} E. Feireisl, I. Gallagher, A. Novotn\'y:
{\it A singular limit for compressible rotating fluids}.
SIAM J. Math. Anal. {\bf 44} (2012), n. 1, 192-205.

\bibitem{F-J-N} E. Feireisl, B. J. Jin, A. Novotn\'y:
{\it Relative entropies, suitable weak solutions, and weak-strong uniqueness for the compressible Navier-Stokes system}.
J. Math. Fluid Mech., {\bf 14} (2002), n. 4, 717-730.

\bibitem{F-L-N} E. Feireisl, Y. Lu, A. Novotn\'y:
{\it Rotating compressible fluids under strong stratification}.
Nonlinear Anal. Real World Appl., {\bf 19} (2014), 11-18.

\bibitem{F-N} E. Feireisl, A. Novotn\'y:
{\it ``Singular limits in thermodynamics of viscous fluids''}.
Advances in Mathematical Fluid Mechanics, Birkh\"auser Verlag, Basel (2009).


\bibitem{F-N_CPDE} E. Feireisl, A. Novotn\'y:
{\it Multiple scales and singular limits for compressible rotating fluids with general initial data}.
Comm. Partial Differential Equations, {\bf 39} (2014), n. 6, 1104-1127.

\bibitem{F-N-Petz} E. Feireisl, A. Novotn\'y, H. Petzeltov\'a:
{\it On the existence of globally defined weak solutions to the Navier-Stokes equations of compressible isentropic fluids}.
J. Math. Fluid Mech., {\bf 3} (2001), n. 4, 358-392.

\bibitem{F-N-S} E. Feireisl, A. Novotn\'y, Y. Sun:
{\it Suitable weak solutions to the Navier-Stokes equations of compressible viscous fluids}.
Indiana Univ. Math. J., {\bf 60} (2011), n. 2, 611-631.

\bibitem{Galdi} G. P. Galdi:
{\it ``An introduction to the mathematical theory of the Navier-Stokes equations. Steady-state problems''. Second edition}.
Springer Monographs in Mathematics, Springer, New York (2011).

\bibitem{Gall} I. Gallagher:
{\it The Tridimensional Navier-Stokes Equations with Almost Bidimensional Data: Stability; Uniqueness; and Life Span}. Int. Math. Research Notices, (1997), n. 18, 919-935.

\bibitem{GV03} D. G\'erard-Varet:
{\it Highly rotating fluids in rough domains}. 
J. Math. Pures Appl., {\bf 82} (2003), n. 11, 1453-1498.

\bibitem{Germ} P. Germain:
{\it Weak-strong uniqueness for the isentropic compressible Navier-Stokes system}.
J. Math. Fluid Mech., {\bf 13} (2011), n. 1, 137-146.

\bibitem{Gill} A. E. Gill:
{\it ``Atmosphere-ocean dynamics''}.
Academic Press, Orlando (1982).

\bibitem{Gre-Masm} E. Grenier, N. Masmoudi:
{\it Ekman layers of rotating fluids, the case of well-prepared initial data}.
Comm. Partial Differential Equation, {\bf 22} (1997), n. 5-6, 953-975.

\bibitem{Hieb-Pruss} M. Hieber, J. Pr\"uss:
{\it Heat kernels and maximal $L^p$-$L^q$ estimates for parabolic evolution equations}.
Comm. Partial Differential Equations, {\bf 22} (1997), n. 9-10, 1647-1669.

\bibitem{Hoff} D. Hoff:
{\it Global solutions of the {N}avier-{S}tokes equations for
              multidimensional compressible flow with discontinuous initial
              data}. 
J. Differential Equations, {\bf 120} (1995), n. 1, 215-254.

\bibitem{Klein05} R. Klein: 
{\it Multiple spatial scales in engineering and atmospheric low
              {M}ach number flows}. M2AN Math. Model. Numer. Anal., {\bf 39} (2005), n. 3, 537-559.		
\bibitem{Klein10} R. Klein:               
  {\it Scale-Dependent Models for Atmospheric Flows}. 
 {Annual Review of Fluid Mechanics}, {\bf 42} (2010), n. 1, 249-274.

\bibitem{PLLions} P.-L. Lions:
{\it ``Mathematical topics in Fluid Mechanics. Vol. 2. Compressible models''},
Oxford Lecture Series in Mathematics, Oxford University Press, New York (1998).

\bibitem{Maj} A. Majda:
{\it ``Introduction to PDEs and waves for the atmosphere and ocean''}.
Courant Lecture Notes in Mathematics, {\bf 9}, New York University, Courant Institute of Mathematical Sciences, New York;
American Mathematical Society, Providence, RI (2003).

\bibitem{Masm_2000} N. Masmoudi:
{\it Ekman layers of rotating fluids: the case of general initial data}.
Comm. Pure Appl. Math., {\bf 53} (2000), n. 4, 432-483.

\bibitem{M-N} A. Matsumura, T. Nishida:
{\it The initial value problem for the equations of motion of viscous and heat-conductinve gases}.
J. Math. Kyoto Univ., {\bf 20} (1980), n. 1, 67-104.

\bibitem{Nash} J. Nash:
{\it Continuity of solutions of parabolic and elliptic equations}.
Amer. J. Math., {\bf 80} (1958), 931-954.

\bibitem{N-S} A. Novotny, I. Straskraba: 
{\it ``Introduction to the mathematical theory of compressible flow''}. 
n. 27;  Oxford University Press (2004).

\bibitem{Ped} J. Pedlosky:
{\it ``Geophysical fluid dynamics''}. Springer-Verlag, New-York (1987).

\bibitem{PRST} G. Ponce, R. Racke, T.C. Sideris, E. S. Titi:
{\it Global stability of large solutions to the {$3$}{D}
              {N}avier-{S}tokes equations}. 
Comm. Math. Phys., {\bf 159} (1994), n. 2, 329-341.

\bibitem{Scrob} S. Scrobogna:
{\it Highly rotating fluids with vertical stratification for periodic data and vanishing vertical viscosity}.
Rev. Mat. Iberoam., {\bf 34} (2018), n. 1, 1-58.

\bibitem{Ser06} G. A. Seregin:
{\it New version of the Ladyzhenskaya-Prodi-Serrin condition}.
St. Petersburg Math. J., {\bf 18} (2007), n. 1, 89-103.

\bibitem{Shi18} Y. Shibata, Y. Enomoto: 
{\it Global existence of classical solutions and optimal decay rate for compressible flows via the theory of semigroups}. Handbook of mathematical analysis in mechanics of viscous
fluids; Springer (2018), 2085-2181.
              
\bibitem{Sol04} V. A. Solonnikov:
{\it Estimates of solutions of the Stokes equations in S. L. Sobolev spaces with a mixed norm.}
J. Math. Sci., {\bf 123} (2004), n. 6, 4637-4653.
              
\bibitem{TZ} R. Temam, M. Ziane:
{Some mathematical problems in geophysical fluid dynamics}. Handbook of mathematical fluid dynamics. {V}ol. {III}; 
North-Holland, Amsterdam (2004), 535-657.

\end{thebibliography}
\end{document}